\DeclareMathOperator{\Tr}{Tr}
\DeclareMathOperator{\Alt}{Alt}
\DeclareMathOperator{\Bil}{Bil}
\DeclareMathOperator{\M}{\mathcal{M}}
\DeclareMathOperator{\N}{\mathcal{N}}
\DeclareMathOperator{\rank}{rank}
\DeclareMathOperator{\rad}{rad}
\DeclareMathOperator{\Symm}{Symm}
\def\matrices{M_{n}(\mathbb{F}_q)}
\theoremstyle{plain}
\newtheorem{theorem}{Theorem}
\newtheorem{corollary}{Corollary}
\newtheorem{lemma}{Lemma}
\theoremstyle{definition}
\newtheorem{definition}{Definition}
\begin{document}

\title[Rank properties of subspaces]
{Rank-related dimension bounds for subspaces   \\ 
of bilinear forms over finite fields}

\author[R. Gow]{Rod Gow}
\address{School of Mathematics and Statistics\\
University College Dublin\\
 Ireland}
\email{rod.gow@ucd.ie}

\keywords{matrix, rank, bilinear form, alternating bilinear form,  symmetric bilinear form, common isotropic points, constant rank subspace}
\subjclass{15A03, 15A33}

\begin{abstract} 
Let $q$ be a power of a prime and let $V$ be a vector space of finite dimension $n$
over the field of order $q$. Let $\Bil(V)$ denote the set of all bilinear forms defined on $V\times V$, 
let $\Symm(V)$ denote the subspace of $\Bil(V)$ consisting of symmetric bilinear forms, and $\Alt(V)$ denote the subspace of
alternating bilinear forms. Let $\M$ denote a subspace of any of the spaces $\Bil(V)$, 
$\Symm(V)$, or $\Alt(V)$. In this paper we investigate hypotheses on the rank of the non-zero elements of $\M$ which lead to reasonable
bounds for $\dim \M$. Typically, we look at the case where exactly two or three non-zero ranks occur, one of which is usually $n$.
In the case that $\M$ achieves the maximal dimension predicted by the dimension bound, we try to enumerate the number of forms
of a given rank in $\M$ and describe geometric properties of the radicals of the degenerate elements of $\M$.
\end{abstract}
\maketitle

\section{Introduction}

\noindent Let $q$ be a power of a prime and let $V$ be a vector space of finite dimension $n$
over $\mathbb{F}_q$. A \emph{bilinear form} on $V\times V$ is a function $f:V\times V\to
\mathbb{F}_q$ that is $\mathbb{F}_q$-linear in each of the variables. The \emph{left radical} $\rad_L f$  of $f$ is the subspace 
of all elements $u\in V$ such that $f(u,v)=0$ for all $v\in V$. Similarly, the \emph{right radical} $\rad_R f$  of $f$ is the subspace 
of all elements $w\in V$ such that $f(v,w)=0$ for all $v\in V$. It is known that 
\[
\dim \rad_L f=\dim \rad_R f.
\]
See, for example, Theorem 1.11 of \cite{Art}. We call the integer $\dim V-\dim \rad_L f$ the \emph{rank} of $f$, and we denote it by
$\rank f$. In the case that $f$ is symmetric or alternating, it is clear that the left and right radicals are the same. In this case, 
we call the corresponding subspace the \emph{radical} of $f$ and denote it by $\rad f$. We say that $f$ is degenerate if $\rank f<n$ (equivalently,
if $\rad_L f$ is non-trivial), and non-degenerate if $\rank f=n$.

Let $\Bil(V)$ denote the set of all bilinear forms defined on $V\times V$.  $\Bil(V)$ has the structure of a vector space
of dimension $n^2$ over $\mathbb{F}_q$. 
Let $\Symm(V)$ denote the subspace of $\Bil(V)$ consisting of symmetric bilinear forms, and $\Alt(V)$ denote the subspace of
alternating bilinear forms.

We note that we may identify the vector space $\Bil(V)$ with the vector space $\matrices$, consisting of all $n\times n$ matrices 
with entries in $\mathbb{F}_q$. For if we take a basis  $v_1$, \dots, $v_n$ of $V$, the $n\times n$ matrix 
whose $i,j$-entry is
$f(v_i,v_j)$ completely determines $f$. 

Conversely, let $A$ be an element of $\matrices$. We identify $V$ with $(\mathbb{F}_q)^n$,
considering the elements as column vectors. Then we define an element $f\in \Bil(V)$ by setting
\[
 f(u,v)=u^TAv,
\]
where $u^T$ denotes the transpose of $u$. 

This identification of $\Bil(V)$ with $\matrices$ respects rank, in as much as the rank of an element of $\Bil(V)$ 
equals the usual matrix (row or column) rank of the matrix in $\matrices$ that represents the form. 

We are interested in investigating subspaces $\mathcal{M}$ of the various vector spaces $\Bil(V)$, $\Symm(V)$
and $\Alt(V)$ that satisfy some condition with respect to the rank of each non-zero element of $\mathcal{M}$. From this rank condition,
we hope to find an accurate upper bound for the dimension of
$\mathcal{M}$. 

When the bounds are optimal, and $\dim \M$ reaches the optimal bound, $\M$ often exhibits special properties
of a geometric nature. For example, it may be possible 
to enumerate how many elements of a given rank there are in $\M$. We may also obtain information about $\rad f$ for each element $f$ of $\M$,
or bound the dimension of constant rank subspaces in $\M$. We mention Theorems \ref{dimension_2n}, \ref{dimension_3m}, 
\ref{symmetric_dimension_bound}, \ref{equality_of_dimension}, \ref{spread_of_V}, \ref{three_ranks}, 
\ref{structure_of_2n_dimensional_subspace}, \ref{characteristic_two_bound_for_contained_constant_rank_subspace} as indicators of the type
of theorems proved in this paper.

There is some overlap between certain results proved in this paper and two papers of Kai-Uwe Schmidt, \cite{KUS1}, \cite{KUS2}. 
Most of Theorem \ref{three_ranks} and parts of Theorem \ref{equality_of_dimension} are special cases of Theorem 3.3 of \cite{KUS2}.
The objects of attention in Schmidt's papers are generalizations of subspaces of $\Symm(V)$ in which each non-zero element has rank
at least $r$, where $1\leq r\leq n$. He obtains optimal dimension bounds using 
association scheme techniques but it is interesting to observe that he encounters similar parity
distinctions as we encounter when trying to enumerate the number of forms of a given rank. See, for instance, Case (b) of Theorem
\ref{equality_of_dimension}
and Theorem \ref{structure_of_2n_dimensional_subspace_n_even}, where our findings are inequalities, not necessarily equalities
as in other cases.

\section{Properties of bilinear forms}

\noindent We begin by making a definition that relates subspaces of $V$ with subspaces of bilinear forms.

\begin{definition} Let $U$ be a subspace of $V$ and let $\M$ be a subspace of $\Bil(V)$. We set $\M^L_U$ to be
 the subspace of all elements $f$ of $\M$ that contain $U$ in $\rad_L f$. We likewise set $\M^R_U$ to be
 the subspace of all elements $f$ of $\M$ that contain $U$ in $\rad_R f$. 
 
 In the case that $U$ is one-dimensional and spanned, say, by $u$, we denote the two subspaces by $\M^L_u$ and $\M^R_u$, respectively.
 
 If $\M$ consists of alternating or symmetric forms, we denote the subspaces by $\M_U$ or $\M_u$.
\end{definition}

\begin{lemma} \label{subspace_in_radical}
 Let $U$ be a subspace of $V$ of dimension $r$, where $0<r<n$. 
 Let $\M$ be a subspace of either alternating or symmetric bilinear forms defined on $V\times V$ and suppose that
 $\dim \M\geq rn+\epsilon$, where $\epsilon\geq 0$. 
 
 Then if $\M\leq \Alt(V)$, we have
 \[
  \dim \M_U\geq \frac{r(r+1)}{2}+\epsilon.
 \]
Similarly, if $\M\leq \Symm(V)$, we have
 \[
  \dim \M_U\geq \frac{r(r-1)}{2}+\epsilon.
 \]
\end{lemma}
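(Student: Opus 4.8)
The plan is to apply the rank--nullity theorem to the restriction map that records how a form acts on $U\times V$. Fix a complement $W$ so that $V=U\oplus W$ with $\dim W=n-r$, and define the linear map
\[
\Phi\colon \M\to \Bil(U,V),\qquad \Phi(f)=f|_{U\times V},
\]
where $\Bil(U,V)$ denotes the space of all bilinear maps $U\times V\to\mathbb{F}_q$, of dimension $rn$. Since $\M$ consists of alternating or symmetric forms, the left and right radicals of each $f\in\M$ coincide with $\rad f$, so I would identify the kernel of $\Phi$ as
\[
\ker\Phi=\{\,f\in\M : f(u,v)=0\ \text{for all }u\in U,\ v\in V\,\}=\M_U.
\]

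The crucial point is that the image of $\Phi$ is constrained by the symmetry (resp. alternating) hypothesis, so it is strictly smaller than all of $\Bil(U,V)$. Indeed, for $f\in\M$ the further restriction $f|_{U\times U}$ is again symmetric (resp. alternating), because restricting such a form to a subspace preserves the property. Hence $\Phi(\M)$ lies inside
\[
\mathcal{B}=\{\,g\in\Bil(U,V): g|_{U\times U}\ \text{is symmetric (resp. alternating)}\,\}.
\]
Using the decomposition $V=U\oplus W$, an element $g\in\Bil(U,V)$ is determined independently by its restriction to $U\times U$ and its restriction to $U\times W$; the latter contributes $r(n-r)$ free parameters, while the former contributes $\dim\Symm(U)=r(r+1)/2$ in the symmetric case and $\dim\Alt(U)=r(r-1)/2$ in the alternating case. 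I would therefore compute
\[
\dim\mathcal{B}=\frac{r(r+1)}{2}+r(n-r)=rn-\frac{r(r-1)}{2}
\]
in the symmetric case, and $\dim\mathcal{B}=rn-\tfrac{r(r+1)}{2}$ in the alternating case.

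Combining these, rank--nullity gives $\dim\M_U=\dim\M-\dim\Phi(\M)\geq \dim\M-\dim\mathcal{B}$, whence in the symmetric case
\[
\dim\M_U\geq (rn+\epsilon)-\Bigl(rn-\tfrac{r(r-1)}{2}\Bigr)=\frac{r(r-1)}{2}+\epsilon,
\]
and the alternating case yields $\dim\M_U\geq \tfrac{r(r+1)}{2}+\epsilon$ by the same arithmetic. I do not expect a serious obstacle here, since the argument is essentially a dimension count; the only step requiring genuine care is the correct identification of the target subspace $\mathcal{B}$, that is, recognising precisely how the symmetric or alternating constraint reduces the dimension available to $\Phi(\M)$ on the $U\times U$ block. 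In particular, it is the vanishing of the diagonal forced by the alternating condition that accounts for the extra $r$ in the alternating bound relative to the symmetric one.
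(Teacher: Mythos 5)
Your proof is correct. It takes a mildly different route from the paper's: the paper identifies $\M_U$ as the intersection $\M\cap\Alt(V)_U$ (resp.\ $\M\cap\Symm(V)_U$), computes $\dim\Alt(V)_U=\tfrac{(n-r)(n-r-1)}{2}$ by viewing forms with $U$ in their radical as forms on a space of dimension $n-r$, and then applies the standard inequality $\dim(A\cap B)\geq\dim A+\dim B-\dim(A+B)$ inside the ambient space $\Alt(V)$. You instead apply rank--nullity to the restriction map $\Phi(f)=f|_{U\times V}$, whose kernel is $\M_U$ because left and right radicals coincide for symmetric and alternating forms, and you bound the image by observing that the $U\times U$ block of $\Phi(f)$ inherits the symmetry or alternating constraint. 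The two arguments compute the same quantity --- the codimension of the condition ``$U\subseteq\rad f$'' within $\Symm(V)$ or $\Alt(V)$, namely $rn-\tfrac{r(r-1)}{2}$ and $rn-\tfrac{r(r+1)}{2}$ respectively --- and your arithmetic checks out in both cases. What your version buys is a clear explanation of exactly where the symmetric/alternating hypothesis improves the naive bound $\dim\M-rn$ of Lemma \ref{estimate_for_dimension_of_kernel}, and it is structurally consistent with the restriction-map machinery ($\epsilon_u$, $\eta_u$) the paper introduces immediately afterwards; the paper's version is marginally shorter since it needs no choice of complement $W$. Like the paper's proof, yours works over an arbitrary field.
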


\begin{proof}
 We may identify $\Alt(V)_U$ as the space of all alternating bilinear forms defined on a vector space of dimension $n-r$. Thus
 \[
  \dim \Alt(V)_U=\frac{(n-r)(n-r-1)}{2}.
 \]
Likewise, 
 \[
  \dim \Symm(V)_U=\frac{(n-r)(n-r+1)}{2}.
 \]
 Suppose now that $\M\leq \Alt(V)$. Then $\M_U=\M\cap  \Alt(V)_U$ and the lower bound for the dimension follows from the inequality
 \[
  \dim \M\cap  \Alt(V)_U\geq \dim \M+\dim \Alt(V)_U-\dim \Alt(V).
 \]
An analogous proof applies when $\M$ consists of symmetric forms.
\end{proof}

We note that the result above holds when $V$ is a vector space defined over an arbitrary field, not just a finite one.

We mention a simple corollary.

\begin{corollary} \label{two_dimensional_subspaces}
 
 Let $\M$ be a subspace of $\Symm(V)$ of dimension $2n$ in which each non-zero element has rank at
 least $n-2$. Let $U$ be a two-dimensional subspace of $V$. Then there exists an element of rank $n-2$ in $\M$ whose
 radical is $U$.
\end{corollary}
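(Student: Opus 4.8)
The plan is to invoke Lemma \ref{subspace_in_radical} directly and then exploit the rank hypothesis as a squeeze. Since $U$ is two-dimensional and $\M\leq\Symm(V)$ has dimension $2n$, I would apply the lemma with $r=2$ and $\epsilon=0$. The required numerical hypothesis $\dim\M\geq rn+\epsilon=2n$ then holds with equality (and the constraint $0<r<n$ of the lemma forces $n>2$, the only case of interest), so the symmetric case of the lemma yields
\[
\dim\M_U\geq\frac{r(r-1)}{2}+\epsilon=1.
\]
Hence $\M_U$ contains a nonzero form $f$; that is, there exists a nonzero $f\in\M$ with $U\subseteq\rad f$.

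The second step is the squeeze that pins down both the rank and the radical of this $f$. Because $f$ is a nonzero element of $\M$, the standing hypothesis forces $\rank f\geq n-2$, whence $\dim\rad f\leq 2$. On the other hand, $U\subseteq\rad f$ gives $\dim\rad f\geq\dim U=2$. Combining the two inequalities yields $\dim\rad f=2$, and therefore $\rank f=n-2$. Since $U\subseteq\rad f$ and the two spaces now have the same dimension, we conclude $\rad f=U$, which is precisely the asserted element.

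I expect no serious obstacle here: the entire content is carried by Lemma \ref{subspace_in_radical}, and the only points requiring care are bookkeeping. One must check that the numerical hypothesis of the lemma is met with equality, so that $\epsilon=0$ is the correct (and only available) choice, and that the resulting bound $\dim\M_U\geq 1$ genuinely guarantees a nonzero form rather than merely the zero form. The corollary is then the observation that a form with a two-dimensional subspace inside its radical cannot have rank exceeding $n-2$, so the uniform rank lower bound and the containment $U\subseteq\rad f$ determine $\rank f$ and $\rad f$ exactly.
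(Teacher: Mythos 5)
Your proposal is correct and follows exactly the paper's own argument: apply Lemma \ref{subspace_in_radical} with $r=2$, $\epsilon=0$ to get $\dim\M_U\geq 1$, then squeeze the rank of a nonzero element of $\M_U$ between the hypothesis $\rank f\geq n-2$ and the bound $\rank f\leq n-2$ forced by $U\subseteq\rad f$. Your explicit final step identifying $\rad f=U$ by equality of dimensions is a welcome clarification of a point the paper leaves implicit.
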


\begin{proof}
 
 We take $r=2$ and $\epsilon=0$  in Lemma \ref{subspace_in_radical}. Then we find that $\dim \M_U\geq 1$. Now any non-zero
 element in $\M_U$ has rank at most $n-2$, and hence by our hypothesis on $\M$, it has rank exactly $n-2$. 
\end{proof}

Here again, the corollary is true when $V$ is defined over an arbitrary field $K$, although subspaces $\M$ with the desired
dimension and rank properties do not necessarily exist for all $K$. They exist if $K$ is finite, for example, as we shall
see in Section 5.

It is possible to obtain similar estimates for $\dim \M^L_U$ and $\dim \M^R_U$ when the forms in $\M$ are not necessarily alternating or
symmetric. We will content ourselves with restricting to the case that $U$ is one-dimensional. 

Rather than following the approach
developed in Lemma \ref{subspace_in_radical}, 
we will make use of the following simple principle of linear algebra. Let $V^*$ denote the dual space of $V$,
let $\M$ be a subspace of $\Bil(V)$ and let $u$ be any non-zero element of $V$. We define a linear transformation $\epsilon_u:\M\to V^*$
by setting
\[
 \epsilon_u(f)(v)=f(u,v)
\]
for all $f\in\M$ and all $v\in V$. 
It is clear that $\M^L_u$ is the kernel of $\epsilon_u$. We may similarly define a linear transformation $\eta_u:\M\to V^*$
by setting
\[
 \eta_u(f)(v)=f(v,u)
\]
for all $f\in\M$ and all $v\in V$. Here, $\M^R_u$ is the kernel of $\eta_u$.

We note that there is no reason to assume that $\dim \M^L_u=\dim \M^R_u$ but
both subspaces satisfy the following trivial estimate of dimension.

\begin{lemma} \label{estimate_for_dimension_of_kernel}
 
 For each non-zero element $u$ of $V$, we have $\dim \M^L_u\geq \dim \M-n$ and $\dim \M^R_u\geq \dim \M-n$.
\end{lemma}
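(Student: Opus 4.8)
The plan is to invoke the rank--nullity theorem for the two linear transformations $\epsilon_u$ and $\eta_u$ introduced just above the statement. Since the excerpt has already identified $\M^L_u$ with the kernel of $\epsilon_u$ and $\M^R_u$ with the kernel of $\eta_u$, the entire content of the lemma reduces to bounding the dimension of these kernels from below, and for that only the codomain matters.

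First I would apply rank--nullity to $\epsilon_u:\M\to V^*$, writing
\[
 \dim \M^L_u=\dim \ker \epsilon_u=\dim \M-\dim(\operatorname{im}\epsilon_u).
\]
Because $\operatorname{im}\epsilon_u$ is a subspace of $V^*$ and $\dim V^*=\dim V=n$, the image has dimension at most $n$, so $\dim \M^L_u\geq \dim \M-n$. The identical argument applied to $\eta_u:\M\to V^*$ yields $\dim \M^R_u\geq \dim \M-n$, and since the two maps are treated symmetrically I would not repeat the computation.

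The only points worth checking are that $\epsilon_u$ and $\eta_u$ really are linear maps into $V^*$ — which is immediate, since each $\epsilon_u(f)$ and $\eta_u(f)$ is linear in $v$ by bilinearity of $f$, and the assignments $f\mapsto\epsilon_u(f)$ and $f\mapsto\eta_u(f)$ are linear in $f$ — and that $\dim V^*=n$, which holds because $V$ is $n$-dimensional. There is no genuine obstacle here: as the text already flags, the estimate is deliberately crude, bounding the image by all of $V^*$ rather than computing it, which is precisely why the resulting bound depends only on $n$ and is uniform in $u$ and independent of any finer structure of $\M$.
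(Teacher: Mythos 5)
Your proof is correct and follows exactly the paper's argument: identify $\M^L_u$ and $\M^R_u$ as the kernels of $\epsilon_u$ and $\eta_u$, bound the images by $\dim V^*=n$, and apply rank--nullity. Nothing further is needed.
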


\begin{proof}
 It will suffice to consider only $\M^L_u$.
 The image of $\M$ under $\epsilon_u$ is a subspace of $V^*$ and hence has dimension at most $n$. Since $\M^L_u$ is the kernel of
 $\epsilon_u$, the result follows from the rank-nullity theorem. 
\end{proof}

To begin our analysis, we describe a numerical invariant of any subspace
$\mathcal{M}$ of $\Bil(V)$ that relates to the distribution of ranks among the elements of $\mathcal{M}$ and also to the dimension
of the subspaces $\M^L_u$ and $\M^R_u$.

\begin{definition} \label{vanishing_elements}

Let $\mathcal{M}$ be a subspace  of $\Bil(V)$.
Let $Z(\M)$ denote the set
of ordered pairs $(u,v)$ in $V\times V$ that satisfy 
\[
 f(u,v)=0
\]
for all $f\in \mathcal{M}$.
  
\end{definition}

This definition makes sense for a subspace of $\Bil(V)$ when $V$ is a vector space over an arbitrary field, not just
a finite one. However, one of the main tools used in this paper is the following counting theorem that only makes sense
for finite fields.

\begin{theorem} \label{common_zeros}
Let $q$ be a power of a prime and let $V$ be a vector space of finite dimension $n$
over $\mathbb{F}_q$. Let $\mathcal{M}$ be a subspace  of $\Bil(V)$, with $\dim \mathcal{M}=d$. 
For each integer $k$ satisfying $0\leq k\leq n$, let $A_k$ denote the number of elements of rank $k$ in $\mathcal{M}$. For
each element $u$ of $V$, let $d(u)=\dim \M^L_u$ and $e(u)=\dim \M^R_u$.
Let $Z(\M)$ be as in Definition \ref{vanishing_elements}. Then we have
\[
 q^{d-n}|Z(\M)|=\Sigma_{u\in V}q^{d(u)}=\Sigma_{u\in V}q^{e(u)}=\Sigma_{0\leq k\leq n}A_kq^{n-k}.
\]
\end{theorem}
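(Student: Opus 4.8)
The plan is to prove the entire string of equalities by identifying each expression with the cardinality of an explicitly described set, and passing between the two sides of each equality either by a double count or by a rank--nullity argument. The central observation is that $q^{d(u)}=|\M^L_u|$ counts the forms $f\in\M$ for which $u$ lies in $\rad_L f$, and symmetrically $q^{e(u)}=|\M^R_u|$ counts those with $u\in\rad_R f$.

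First I would establish the two rightmost equalities together. Consider the set $S=\{(u,f)\in V\times\M : u\in\rad_L f\}$. Summing over $u$ first gives $|S|=\sum_{u\in V}|\M^L_u|=\sum_{u\in V}q^{d(u)}$, while summing over $f$ first gives $|S|=\sum_{f\in\M}|\rad_L f|=\sum_{f\in\M}q^{n-\rank f}$, and grouping the forms according to their rank turns the latter into $\sum_{0\le k\le n}A_k q^{n-k}$. This yields $\sum_{u}q^{d(u)}=\sum_{k}A_k q^{n-k}$. Repeating the argument verbatim with the right radical and the set $S'=\{(u,f):u\in\rad_R f\}$ gives $\sum_{u}q^{e(u)}=\sum_{f}q^{n-\dim\rad_R f}$; here I would invoke the quoted identity $\dim\rad_L f=\dim\rad_R f$ to see that this again equals $\sum_{k}A_k q^{n-k}$.

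Finally I would connect $|Z(\M)|$ to $\sum_{u}q^{d(u)}$. For fixed $u$, the slice $Z_u=\{v\in V : f(u,v)=0 \text{ for all } f\in\M\}$ is precisely the annihilator in $V$ of the image $\epsilon_u(\M)\subseteq V^*$, since $f(u,v)=0$ for all $f$ means exactly that $\ell(v)=0$ for every $\ell\in\epsilon_u(\M)$. By rank--nullity (as in Lemma \ref{estimate_for_dimension_of_kernel}) we have $\dim\epsilon_u(\M)=d-\dim\M^L_u=d-d(u)$, so its annihilator has dimension $n-d+d(u)$, whence $|Z_u|=q^{\,n-d+d(u)}$. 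Summing over $u$ gives $|Z(\M)|=q^{\,n-d}\sum_{u}q^{d(u)}$, which is the leftmost equality after multiplying through by $q^{d-n}$.

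The argument is essentially bookkeeping, so there is no single hard step; the only point requiring genuine care is the duality in the last paragraph, where one must correctly identify $Z_u$ with the \emph{full} annihilator of $\epsilon_u(\M)$ and track the dimension count so that the exponent $n-d+d(u)$ comes out right. The case $u=0$, which forces $d(0)=d$ and $|Z_0|=q^n$, is a convenient consistency check. Everything else reduces to interchanging orders of summation together with the already-cited equality $\dim\rad_L f=\dim\rad_R f$.
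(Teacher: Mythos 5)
Your proof is correct and follows essentially the same route as the paper: a double count of the incidence set $\{(u,f): u\in\rad_L f\}$ (and its right-radical analogue) for the two middle sums, and the identification of the slice $\{v: f(u,v)=0 \text{ for all } f\}$ with the annihilator of $\epsilon_u(\M)$, of dimension $n-d+d(u)$, for the $|Z(\M)|$ term. The only cosmetic difference is that you handle the right-radical case by citing $\dim\rad_L f=\dim\rad_R f$ directly rather than repeating the count, which is equally valid.
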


\begin{proof}
Let $\Omega$ denote the set of ordered pairs $(f, u)$, where $f\in \mathcal{M}$
and $u\in \rad_L f$. We intend to calculate
$|\Omega|$ by double counting. We begin by fixing $f\in \mathcal{M}$, with  $\rank f=k$. 
 We have $\dim \rad_L f=n-k$, since $f$ has rank $k$. Thus there are $q^{n-k}$ elements of the form $(f,u)$ in $\Omega$.
 Summing over the various ranks, we obtain
 \[
  |\Omega|=\Sigma_{0\leq k\leq n}A_kq^{n-k}.
 \]

 Next, we fix $u$ in $V$ and note that there are $q^{d(u)}$ elements $f$ with $(f,u)\in \Omega$. Thus, summing over all elements of $V$, we obtain
 \[
  |\Omega|=\Sigma_{u\in V}q^{d(u)}=\Sigma_{0\leq k\leq n}A_kq^{n-k}.
 \]

Finally, we count how many pairs $(u,v)$ are in $Z(\M)$. We fix $u$ and observe that those $v$ in $Z(\M)$ are the vectors annihilated
by the subspace $\epsilon_u(\M)$ in $V^*$. The dimension of the subspace of vectors annihilated by $\epsilon_u(\M)$ is 
\[
\dim V^*-\dim \epsilon_u(\M).
\]
We also know that
\[
 \dim \epsilon_u(\M)=\dim \M-\dim \M^L_u=d-d(u).
\]
Thus the dimension of the subspace of vectors annihilated by $\epsilon_u(\M)$ is $n-d+d(u)$. 
When we sum over all elements of $V$, we obtain
\[
 |Z(\M)|=\Sigma_{u\in V}q^{n-d+d(u)}.
\]

We compare these formulae and thereby derive the equalities
\[
 q^{d-n}|Z(\M)|=\Sigma_{u\in V}q^{d(u)}=\Sigma_{0\leq k\leq n}A_kq^{n-k},
\]
as required. 

To obtain a formula involving the powers $q^{e(u)}$, we count ordered pairs $(f, u)$, where $f\in \mathcal{M}$
and $u\in \rad_R f$ and proceed in an identical manner to that used above.
\end{proof}

It is clear that given any vectors $u$ and $v$ in $V$, both $(u,0)$ and $(0,v)$ are in $Z(\M)$. These are the trivial elements
of $Z(\M)$. Subspaces $\M$ for which $Z(\M)$ consists of these trivial elements have special interest in the theory of bilinear forms.

We can characterize such subspaces as follows.

\begin{corollary} \label{condition_for_only_trivial_elements}

Let $\mathcal{M}$ be a subspace  of $\Bil(V)$, with $\dim \mathcal{M}=d$. Then $Z(\M)$ consists only of trivial elements
if and only if $\dim \M^L_u=\dim \M^R_u=d-n$ for all non-zero elements $u$ of $V$.
 
\end{corollary}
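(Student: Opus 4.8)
The plan is to translate the statement entirely into a comparison of $|Z(\M)|$ against the count of trivial elements, and then to extract the claimed dimension equalities from the equality case of the inequality furnished by Lemma \ref{estimate_for_dimension_of_kernel}. First I would count the trivial elements: the pairs $(u,0)$ as $u$ ranges over $V$ number $q^n$, the pairs $(0,v)$ likewise number $q^n$, and these two families overlap only in $(0,0)$, so there are exactly $2q^n-1$ trivial elements. Since every such pair genuinely lies in $Z(\M)$, the assertion that $Z(\M)$ consists only of trivial elements is equivalent to the numerical identity $|Z(\M)|=2q^n-1$.

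Next I would invoke Theorem \ref{common_zeros} in the form $q^{d-n}|Z(\M)|=\Sigma_{u\in V}q^{d(u)}$ and isolate the contribution of $u=0$. Because $0$ lies in $\rad_L f$ for every $f$, we have $\M^L_0=\M$ and hence $d(0)=d$, contributing $q^d$ to the sum. For each of the $q^n-1$ nonzero vectors $u$, Lemma \ref{estimate_for_dimension_of_kernel} gives $d(u)\geq d-n$, so $q^{d(u)}\geq q^{d-n}$. Summing yields $\Sigma_{u\in V}q^{d(u)}\geq q^d+(q^n-1)q^{d-n}=2q^d-q^{d-n}$, whence $|Z(\M)|\geq 2q^n-1$, which merely recovers the trivial lower bound from the counting side and sets up the equality discussion.

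The crux is the equality analysis. Since each nonzero term $q^{d(u)}$ is bounded below by $q^{d-n}$, the aggregate lower bound $2q^d-q^{d-n}$ is attained if and only if every one of these terms equals its minimum, that is, $d(u)=d-n$ for all nonzero $u$. Tracing this back, $|Z(\M)|=2q^n-1$ holds exactly when $\dim\M^L_u=d-n$ for every nonzero $u$. Repeating the identical argument with the companion identity $q^{d-n}|Z(\M)|=\Sigma_{u\in V}q^{e(u)}$ shows that $|Z(\M)|=2q^n-1$ is equivalent also to $\dim\M^R_u=d-n$ for every nonzero $u$. Combining the two equivalences with the reformulation from the first paragraph yields the stated biconditional. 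I expect no serious obstacle in this argument; the only point demanding genuine care is the bookkeeping of the $u=0$ term together with the exact trivial count $2q^n-1$, since an off-by-one there would silently corrupt the equality case and therefore the whole characterization.
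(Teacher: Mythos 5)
Your proof is correct, but it takes a genuinely different route from the paper's. The paper argues pointwise and without any counting: for a fixed non-zero $u$, the vectors $v$ with $(u,v)\in Z(\M)$ form the annihilator of $\epsilon_u(\M)$ in $V$, so no non-trivial pair with first coordinate $u$ exists precisely when $\epsilon_u(\M)=V^*$, i.e.\ when $\dim\M^L_u=d-\dim\epsilon_u(\M)=d-n$; the same is done with $\eta_u$ for the right radicals. That argument is two lines and would work verbatim over an arbitrary field. You instead reduce the triviality of $Z(\M)$ to the numerical identity $|Z(\M)|=2q^n-1$ and then squeeze the equality case of $\Sigma_u q^{d(u)}\geq q^d+(q^n-1)q^{d-n}$ coming from Theorem \ref{common_zeros} and Lemma \ref{estimate_for_dimension_of_kernel}; since each summand is individually bounded below, equality forces $d(u)=d-n$ for every non-zero $u$, and the companion sum over $q^{e(u)}$ handles the right radicals. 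This is sound (including the degenerate situation $d<n$, where $q^{d(u)}>q^{d-n}$ strictly and equality is correctly impossible), and your care over the count $2q^n-1$ of trivial pairs is exactly the right place to be careful. What you lose is generality -- your argument is intrinsically tied to the finite field -- and what you gain is that the remark the paper makes immediately after the corollary, namely that $|Z(\M)|\geq 2q^n-1$ with equality exactly in the stated situation, falls out of your proof as a byproduct rather than being deduced afterwards from the corollary.
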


\begin{proof}
 
 We know that $Z(\M)$ contains no non-trivial elements if and only if $\epsilon_u(\M)=V^*$ for all non-zero $u$ in $V$. Since
 $\dim \epsilon_u(\M)=d-\dim \M^L_u$, we see that this occurs if and only if $\dim \M^L_u=d-n$. Similarly, $Z(\M)$ contains no 
 non-trivial elements if and only if
 $\eta_u(\M)=V^*$ for all non-zero $u$ in $V$, and this occurs if and only if $\dim \M^R_u=d-n$
\end{proof}

In the case of the finite field $\mathbb{F}_q$, we have the general bound $|Z(\M)|\geq 2q^n-1$ and equality occurs
in the bound if and only if $\dim \M^L_u=\dim \M^R_u=d-n$ for all non-zero elements $u$ of $V$, by Corollary \ref{condition_for_only_trivial_elements}.

We can now obtain a dimension bound for a \emph{constant rank} $r$ subspace of bilinear forms, that is, a 
non-zero subspace where the rank of each non-zero element is a constant, $r$, where $1\leq r\leq n$.

\begin{theorem} \label{dimension_bound}
Let $\mathcal{M}$ be a constant rank $r$ subspace of $\Bil(V)$. Then we have
\[
 \dim \mathcal{M}\leq 2n-r.
\]

\end{theorem}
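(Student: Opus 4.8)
The plan is to feed the constant-rank hypothesis into the counting identity of Theorem \ref{common_zeros} and then extract the bound not from any size estimate on $Z(\M)$ but purely from the fact that $|Z(\M)|$ must be an integer.

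First I would record the rank distribution of $\M$. Writing $d=\dim\M$ and using the notation $A_k$ for the number of elements of rank $k$, the hypothesis that every non-zero element has rank exactly $r$ (with $1\leq r\leq n$) forces $A_0=1$, $A_r=q^d-1$, and $A_k=0$ for every other $k$. Substituting this into the final equality of Theorem \ref{common_zeros} collapses the sum to
\[
 q^{d-n}|Z(\M)|=\Sigma_{0\leq k\leq n}A_kq^{n-k}=q^n+(q^d-1)q^{n-r}.
\]
Solving this for $|Z(\M)|$ (multiplying through by $q^{n-d}$) yields the closed form
\[
 |Z(\M)|=q^{2n-d}+q^{2n-r}-q^{2n-d-r}.
\]

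The crux of the argument is then an integrality observation. Since $Z(\M)\subseteq V\times V$ is a finite set, $|Z(\M)|$ is a non-negative integer. Because $2n-r\geq n$, the middle term $q^{2n-r}$ is an integer, so the remaining combination $q^{2n-d}-q^{2n-d-r}=q^{2n-d-r}(q^r-1)$ must itself be an integer. If we suppose for contradiction that $d>2n-r$, then $2n-d-r\leq -1$ and this combination equals $(q^r-1)/q^{\,d+r-2n}$ with $d+r-2n\geq 1$. As $q^r-1$ is coprime to $q$, this is not an integer, a contradiction. Hence $d\leq 2n-r$, as required.

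I expect the main obstacle to be recognizing \emph{which} property of $Z(\M)$ does the work. The natural impulse is to invoke the general bound $|Z(\M)|\geq 2q^n-1$ mentioned just before the statement, or the trivial bound $|Z(\M)|\leq q^{2n}$; but substituting the closed form above shows that both reduce to inequalities that hold for all $d$ once $r<n$, and so neither constrains $\dim\M$ at all. The operative constraint is purely arithmetic: the right-hand side of the closed form is forced to be a non-negative integer, and that fails precisely when $d+r>2n$. Once this is isolated the remaining steps are routine substitution and a one-line coprimality check.
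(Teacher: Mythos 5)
Your proof is correct and follows essentially the same route as the paper: both substitute the rank distribution $A_0=1$, $A_r=q^d-1$ into Theorem \ref{common_zeros} to get the closed form $|Z(\M)|=q^{2n-d}+q^{2n-r}-q^{2n-d-r}$ and then conclude by integrality of $|Z(\M)|$. The only (cosmetic) difference is that the paper reduces to the single case $d=2n-r+1$, where the offending term is literally $q^{-1}$, while you handle all $d>2n-r$ at once via the coprimality of $q^r-1$ and $q$.
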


\begin{proof}
We set $d=\dim \mathcal{M}$. Following the notation of Theorem \ref{common_zeros}, we have $A_0=1$, $A_r=q^d-1$, and all other values $A_k$ are 0. Thus,
\[
 |Z(\M)|=q^{2n-d}+(q^d-1)q^{2n-d-r}=q^{2n-d}+q^{2n-r}-q^{2n-d-r}.
\]
To prove the dimension bound, it suffices to show that we can not have $d=2n-r+1$. 

Suppose then that $d=2n-r+1$. We obtain
\[
 |Z(\M)|=q^{r-1}+q^{2n-r}-q^{-1}.
\]
This is clearly impossible, as $|Z(M)|$ is a positive integer, but $q^{r-1}+q^{2n-r}-q^{-1}$ is not an integer. It follows that
$d\leq 2n-r$. 

\end{proof}

The dimension bound given in Theorem \ref{dimension_bound} is generally not optimal, although it has the advantage that it applies
for all values of $q$. In a previous paper we obtained the following better upper bound for a constant rank 
$r$ subspace of $\Bil(V)$ provided $q\geq r+1$.

\begin{theorem} \label{constant_rank_dimension_bound}
 
Let $\mathcal{M}$ be a constant rank $r$ subspace of $\Bil(V)$. Then we have
\[
 \dim \mathcal{M}\leq n
\]
provided that $q\geq r+1$. 
\end{theorem}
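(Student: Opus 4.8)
The plan is to prove the bound in two stages: first use the hypothesis $q\geq r+1$ to pass from the finite-field constant-rank condition to a statement about the \emph{generic} rank of a linear combination of a basis of $\M$, and then exploit the generic rank to embed $\M$ linearly into $V^*$.

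For the first stage, fix a basis $A_1,\dots,A_d$ of $\M$ (viewed in $\matrices$) and form the generic matrix $M=t_1A_1+\cdots+t_dA_d$ with entries in $\mathbb{F}_q[t_1,\dots,t_d]$. Each $(r+1)\times(r+1)$ minor of $M$ is a homogeneous polynomial of degree $r+1$ in $t_1,\dots,t_d$, and by the constant rank hypothesis every such minor vanishes at all $\mathbb{F}_q$-rational points of $\mathbb{P}(\M)$. The key point is that a \emph{nonzero} homogeneous polynomial of degree $r+1$ cannot vanish at every rational point of $\mathbb{P}^{d-1}(\mathbb{F}_q)$ once $q\geq r+1$: a Serre-type bound on the number of rational points of a hypersurface of degree $r+1\leq q$ in $\mathbb{P}^{d-1}$ leaves a nonempty rational complement (a short count shows the complement has at least $q^{d-2}(q-r)>0$ points). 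Hence every $(r+1)\times(r+1)$ minor of $M$ is identically zero, so $\rank M=r$ over the rational function field, i.e.\ the generic rank of $\M$ equals $r$.

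For the second stage, I would use the generic rank to locate an evaluation that is injective on $\M$. Recall from Lemma~\ref{estimate_for_dimension_of_kernel} that $\epsilon_u$ and $\eta_u$ already give $\dim\M\leq n+\dim\M^L_u$ and $\dim\M\leq n+\dim\M^R_u$, so it suffices to produce a single vector $u$ lying in no left radical (or in no right radical) of a nonzero element of $\M$; then the corresponding map $\epsilon_u:\M\to V^*$ (or $\eta_u$) is injective and $\dim\M\leq\dim V^*=n$. The case $r=n$ is immediate and illustrates the mechanism: every nonzero element is nondegenerate, so $0$ is the only common zero of the radicals, and $\epsilon_u$ is injective for every $u\neq0$. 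For $r<n$ the generic-rank conclusion is what rescues the argument: since $\rank M=r$ generically, some fixed $r\times r$ minor of $M$ is a nonzero polynomial, and the same Serre-type count (now in degree $r\leq q$) produces a rational point $f_0\in\M$ at which this minor is nonzero, pinning down $r$ independent rows and columns of $f_0$; I would then use this distinguished nondegenerate $r\times r$ block to build the required vector $u$ avoiding all radicals on one side.

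The main obstacle is precisely this last step: passing from ``generic rank $r$'' to an honest injective evaluation. The naive counting available from Theorem~\ref{common_zeros} is not enough, because when $d>n$ Lemma~\ref{estimate_for_dimension_of_kernel} forces $\M^L_u\neq0$ and $\M^R_u\neq0$ for \emph{every} $u$, so both unions of radicals fill $V$ and no evaluation point can be found by a crude size estimate. What must be shown is that the constant-rank structure (equivalently the generic-rank-$r$ structure secured in the first stage) forces the radicals on one side to be sufficiently constrained---as happens transparently for $r=1$, where all nonzero elements share a common row space or a common column space---so that one of the two unions of radicals is a proper subset of $V$. Controlling this one-sided radical geometry, and verifying that the field-size hypothesis $q\geq r+1$ is exactly what makes it work, is where the real content of the theorem lies.
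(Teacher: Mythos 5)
There is a genuine gap here, and you have in fact named it yourself. Your first stage is sound: every $(r+1)\times(r+1)$ minor of the generic matrix $t_1A_1+\cdots+t_dA_d$ is a form of degree $r+1\leq q$ vanishing at every point of $\mathbb{P}^{d-1}(\mathbb{F}_q)$, and the Serre-type count you quote does force all these minors to vanish identically, so the generic rank of $\M$ is $r$. But that step is essentially free and yields no dimension bound on its own. The entire content of the theorem sits in your second stage, which you only sketch: you reduce the problem to producing a vector $u$ lying in no left radical (or in no right radical) of a nonzero element of $\M$, you correctly observe that no crude count can produce such a $u$ (indeed, if $\dim \M>n$ then Lemma \ref{estimate_for_dimension_of_kernel} forces $\M^L_u\neq 0$ and $\M^R_u\neq 0$ for every $u$, so under the hypothesis you are trying to contradict no such $u$ exists on either side, and the contradiction has to be extracted from finer structural information), and you then explicitly concede that you do not know how to control the one-sided radical geometry. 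An argument that ends with ``this is where the real content of the theorem lies'' is a statement of the problem, not a proof. The distinguished nonsingular $r\times r$ block of a single element $f_0$ gives you no purchase on the radicals of the \emph{other} elements of $\M$, which is what the claim requires.

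For calibration: the paper itself does not prove Theorem \ref{constant_rank_dimension_bound}; it quotes the result from the earlier paper \cite{Gow1}, remarking only that the proof works with the identification of $\Bil(V)$ with $\matrices$. So there is no in-text argument you could have been expected to reproduce, but as written your proposal establishes only the generic-rank statement, not the bound $\dim \M\leq n$. To complete your route you would have to prove, from the constant-rank hypothesis and $q\geq r+1$ alone, that the union of the left radicals or the union of the right radicals of the nonzero elements of $\M$ is a proper subset of $V$ --- the correct generalization of the $r=1$ dichotomy (common column space or common row space) --- and nothing in the proposal currently does this, nor is it even clear from what you write that this intermediate claim is true in the form you need it.
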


In proving this theorem, we use the identification of $\Bil(V)$ with $\matrices$. 

Our dimension bound for a constant rank subspace of $\Bil(V)$ enables us to obtain the following generalization.

\begin{theorem} \label{two_rank_subspace}
 
Let $\M$ be a subspace of $\Bil(V)$ and suppose that the rank of each non-zero element of $\M$ is either $r$ or $n$, where $r<n$. Then we have
$\dim \M\leq 3n-r$. Moreover, if $q\geq r+1$, we have the improved bound $\dim \M\leq 2n$. 
\end{theorem}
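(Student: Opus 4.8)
The plan is to exploit the subspaces $\M^L_u$ introduced just before Lemma \ref{estimate_for_dimension_of_kernel}, and to observe that under the two-rank hypothesis each of them is essentially a constant rank $r$ subspace. Fix any non-zero $u\in V$. If $f$ is a non-zero element of $\M^L_u$, then $u\in\rad_L f$, so $\rank f<n$; since the only ranks permitted in $\M$ are $r$ and $n$, this forces $\rank f=r$. Hence every non-zero element of $\M^L_u$ has rank exactly $r$, so either $\M^L_u=\{0\}$ or $\M^L_u$ is a constant rank $r$ subspace of $\Bil(V)$ in the sense of Theorem \ref{dimension_bound}.

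First I would combine this observation with the dimension bound for constant rank subspaces. By Theorem \ref{dimension_bound} applied to $\M^L_u$ (the degenerate case $\M^L_u=\{0\}$ being trivial), we obtain $\dim\M^L_u\leq 2n-r$. On the other hand, Lemma \ref{estimate_for_dimension_of_kernel} supplies the crude lower bound $\dim\M^L_u\geq\dim\M-n$. Chaining these two inequalities gives $\dim\M-n\leq 2n-r$, that is, $\dim\M\leq 3n-r$, which is the first assertion.

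For the improved bound I would run the identical argument, but replace the appeal to Theorem \ref{dimension_bound} with the sharper Theorem \ref{constant_rank_dimension_bound}. The latter requires precisely the hypothesis $q\geq r+1$, and under it yields $\dim\M^L_u\leq n$ for the constant rank $r$ subspace $\M^L_u$. Feeding this into $\dim\M-n\leq\dim\M^L_u$ produces $\dim\M\leq 2n$, as desired.

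I do not expect a serious obstacle: the whole proof reduces to the single observation that placing a fixed vector in the left radical eliminates the full-rank elements and leaves behind a constant rank $r$ subspace, after which one merely substitutes the two constant rank bounds into the kernel estimate of Lemma \ref{estimate_for_dimension_of_kernel}. The only points needing a moment's care are the degenerate case $\M^L_u=\{0\}$, where $\dim\M\leq n$ holds directly and comfortably beats both target bounds, and the bookkeeping check that the hypothesis $q\geq r+1$ of Theorem \ref{constant_rank_dimension_bound} coincides exactly with the hypothesis in the improved statement. One could equally carry out the argument with $\M^R_u$ in place of $\M^L_u$, using the right-radical halves of the same two lemmas, which gives a second, symmetric derivation.
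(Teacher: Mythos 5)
Your proposal is correct and follows essentially the same route as the paper: fix a non-zero $u$, observe that the two-rank hypothesis makes $\M^L_u$ a constant rank $r$ subspace, bound its dimension by Theorem \ref{dimension_bound} (respectively Theorem \ref{constant_rank_dimension_bound} when $q\geq r+1$), and combine with the kernel estimate $\dim\M\leq\dim\M^L_u+n$ from Lemma \ref{estimate_for_dimension_of_kernel}. The only additions beyond the paper's text are your explicit handling of the degenerate case $\M^L_u=\{0\}$, which is a harmless and reasonable piece of care.
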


\begin{proof}
We use the notation introduced in Lemma \ref{estimate_for_dimension_of_kernel}. Let $u$ be a non-zero element of $V$. The non-zero
elements of $\M^L_u$ are degenerate, and hence $\M^L_u$ is a constant rank $r$ subspace. 

Theorem \ref{dimension_bound} now implies that $\dim \M^L_u\leq 2n-r$. Thus $\dim \M\leq \dim M^L_u+n\leq 3n-r$ for all values of $q$.
If we assume that $q\geq r+1$, we have $\dim \M^L_u\leq n$ by Theorem \ref{constant_rank_dimension_bound}. Thus $\dim \M\leq n+n=2n$, as claimed.
\end{proof}

We would like to consider in more detail what occurs in Theorem \ref{two_rank_subspace} when $\dim \M=2n$, and $q\geq r+1$. We require
a preliminary lemma, which we prove with the aid of Zsigmondy's theorem, although we suspect a more elementary proof can be devised.

\begin{lemma} \label{zsigmondy}
 
Let $a\geq 2$ be an integer and let $r$, $s$ and $t$ be positive integers. Suppose that $a^r-1$ divides $(a^s-1)^t$. Then either $r$ divides $s$
or $r=2$, $a+1$ is a power of $2$,  $s$ is odd and $t$ is sufficiently large.  
\end{lemma}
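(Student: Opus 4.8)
The plan is to derive everything from Zsigmondy's theorem via the order-theoretic behaviour of primitive prime divisors. Recall that a prime $p$ is a \emph{primitive prime divisor} of $a^r-1$ if $p\mid a^r-1$ but $p\nmid a^j-1$ for all $j$ with $1\leq j<r$; equivalently, the multiplicative order of $a$ modulo $p$ equals $r$. The key consequence of this order description is that such a $p$ divides $a^m-1$ if and only if $r\mid m$.

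First I would suppose that $a^r-1$ has a primitive prime divisor $p$. Since $a^r-1$ divides $(a^s-1)^t$, the prime $p$ divides $(a^s-1)^t$, hence divides $a^s-1$; the order characterisation then forces $r\mid s$, giving the first alternative. Thus $r\mid s$ can fail only when $a^r-1$ has no primitive prime divisor, and Zsigmondy's theorem lists exactly these exceptional cases: $r=1$; $r=2$ with $a+1$ a power of $2$; and $(a,r)=(2,6)$. The case $r=1$ gives $r\mid s$ trivially. For $(a,r)=(2,6)$ I would argue directly, using $a^r-1=63=7\cdot 9$: divisibility by $7$ forces $3\mid s$ (the order of $2$ modulo $7$ being $3$), while divisibility of $(2^s-1)^t$ by $9$ forces $3\mid 2^s-1$ and hence $s$ even; together these yield $6\mid s$, again the first alternative.

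The remaining, substantive case is $r=2$ with $a+1=2^k$ a power of $2$, where $k\geq 2$ because $a\geq 2$. If $s$ is even then $2\mid s$ and we are done, so I may assume $s$ is odd, which is precisely the requirement of the second alternative. Here I would isolate the $2$-adic obstruction. Writing $a^2-1=(a-1)(a+1)=2^{k+1}(2^{k-1}-1)$, one checks that the odd part $2^{k-1}-1$ already divides $a^s-1$ (since $a\equiv 1\pmod{2^{k-1}-1}$) and so imposes no condition; but as $a$ and $s$ are both odd, the $2$-adic valuation satisfies $v_2(a^s-1)=v_2(a-1)=1$, whence $v_2((a^s-1)^t)=t$. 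The hypothesis $a^2-1\mid(a^s-1)^t$ therefore forces $t\geq k+1=v_2(a^2-1)$, which is the precise content of ``$t$ sufficiently large''.

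The main obstacle I anticipate is not the primitive-divisor argument, which is immediate, but the careful treatment of the two genuine Zsigmondy exceptions: confirming that $(a,r)=(2,6)$ nonetheless yields $r\mid s$ rather than a true counterexample, and computing the exact valuation $v_2(a^s-1)=v_2(a-1)=1$ for odd $s$, which is what quantifies how large $t$ must be.
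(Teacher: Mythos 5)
Your proposal is correct and follows essentially the same route as the paper: invoke Zsigmondy to get a primitive prime divisor whose order forces $r\mid s$, then dispose of the exceptional cases $(a,r)=(2,6)$ and $r=2$ with $a+1$ a power of $2$ exactly as the paper does. Your only departure is a sharper treatment of the $r=2$ exception, where the $2$-adic valuation computation $v_2(a^s-1)=1$ for odd $s$ shows that $t\geq v+1$ is \emph{necessary}, whereas the paper merely observes that the divisibility holds whenever $t\geq v+1$; this is a small but genuine improvement in precision.
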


\begin{proof}
 We may clearly assume that $r>1$. By Zsigmondy's theorem, with certain exceptions which we shall consider later, there exists a prime $p$, say, that divides $a^r-1$ but does not divide $a^u-1$ for any integer $u$ satisfying $1\leq u<r$. 

Suppose that we are in a non-exceptional case of Zsigmondy's theorem, and let $p$ be a prime divisor of $a^r-1$ with the property described above. Then $a$ has order $r$ modulo $p$. Now as $p$ divides $a^r-1$, it also divides $(a^s-1)^t$ by hypothesis, and hence divides $a^s-1$ also.
Thus $a^s\equiv 1\bmod p$. This implies that $r$ divides $s$, since $r$ is the order of $a$ modulo $p$.

We turn to the exceptional cases of Zsigmondy's theorem. When $a=2$ and $n=6$, no such $p$ exists. Now $2^6-1=63$ and we are thus assuming that
$63$ divides $(2^s-1)^t$. This implies that $3$ and $7$ both divide $2^s-1$. The order of $2$ modulo 3 is $2$ and the order of $2$ modulo $7$ is $3$. 
Then $2$ divides $s$, since $2^s\equiv 1\bmod 3$, and $3$ divides $s$, since $2^s\equiv 1\bmod 7$. Hence $6$ divides $s$, as required.

The other exception to Zsigmondy's theorem occurs when $r=2$ and $a+1$ is a power of $2$, say $a+1=2^v$, for some integer $v\geq 2$. 
Then $a^2-1=2^v(a-1)$ and hence $a^2-1$ will divide $(a^s-1)^t$ whenever $s$ is odd and $t\geq v+1$. This explains the exceptional case in the lemma.
\end{proof}

\begin{theorem} \label{dimension_2n}
Let $\M$ be a subspace of $\Bil(V)$ of dimension $2n$ and suppose that the rank of each non-zero element of $\M$ is either $r$ or $n$, where $0<r<n$.
Suppose also that $q\geq r+1$. Then $n-r$ divides $n$ and the number of elements of rank $r$ in $\M$ is
\[
 \frac{(q^n-1)^2}{(q^{n-r}-1)}.
\]

\end{theorem}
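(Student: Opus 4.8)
The plan is to first determine the dimensions $d(u)=\dim \M^L_u$ exactly, then feed these into the counting identity of Theorem \ref{common_zeros} to solve for the number of rank $r$ elements, and finally exploit the integrality of that count together with Lemma \ref{zsigmondy} to obtain the divisibility.

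First I would reuse the observation underlying the proof of Theorem \ref{two_rank_subspace}: for each non-zero $u\in V$, every non-zero element of $\M^L_u$ is degenerate, hence has rank exactly $r$, so $\M^L_u$ is a constant rank $r$ subspace. Since $q\geq r+1$, Theorem \ref{constant_rank_dimension_bound} gives $\dim \M^L_u\leq n$, while Lemma \ref{estimate_for_dimension_of_kernel} gives $\dim \M^L_u\geq \dim \M-n=n$. These bounds squeeze $\dim \M^L_u$ to equal $n$ for every non-zero $u$; that is, $d(u)=n$ for all $u\neq 0$, whereas trivially $d(0)=2n$.

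Next I would substitute these values into the identity $\sum_{u\in V}q^{d(u)}=\sum_{0\leq k\leq n}A_kq^{n-k}$ of Theorem \ref{common_zeros}. The left-hand side becomes $q^{2n}+(q^n-1)q^n=2q^{2n}-q^n$. On the right only $A_0=1$, $A_r$ and $A_n$ are non-zero, with $A_r+A_n=q^{2n}-1$; after eliminating $A_n$ the identity collapses to $(q^n-1)^2=A_r(q^{n-r}-1)$, which yields the claimed count $A_r=(q^n-1)^2/(q^{n-r}-1)$.

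The divisibility statement $n-r\mid n$ is where the real content lies. Since $A_r$ is a non-negative integer, $q^{n-r}-1$ must divide $(q^n-1)^2$, so Lemma \ref{zsigmondy} applies with $a=q$, with its ``$r$'' played by $n-r$, its ``$s$'' by $n$, and its ``$t$'' by $2$. The lemma forces either $(n-r)\mid n$ or the exceptional configuration; but that exception requires $t$ to be sufficiently large, concretely $t\geq v+1$ where $q+1=2^v$ with $v\geq 2$, hence $t\geq 3$. Since here $t=2$, the exceptional branch is excluded and $(n-r)\mid n$ follows. The step I expect to be the main obstacle is precisely this one: one must verify that $t=2$ can never meet the ``sufficiently large'' threshold of the exceptional case, which is exactly why the conclusion is a clean divisibility rather than merely an inequality.
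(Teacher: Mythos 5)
Your proposal is correct and follows essentially the same route as the paper: squeeze $\dim \M^L_u=n$ from Lemma \ref{estimate_for_dimension_of_kernel} and Theorem \ref{constant_rank_dimension_bound}, plug into Theorem \ref{common_zeros} to get $(q^n-1)^2=A_r(q^{n-r}-1)$, and invoke Lemma \ref{zsigmondy} with $t=2$ to rule out the exceptional case. Your extra remark on why $t=2$ falls below the threshold $t\geq v+1\geq 3$ is a slightly more explicit justification of the paper's parenthetical, but the argument is the same.
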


\begin{proof}
 
The proof of Theorem \ref{two_rank_subspace} shows that $\dim \M^L_u=d(u)=n$ for each non-zero element $u$ of $V$. Theorem \ref{common_zeros}
now implies that
\[
 (q^n-1)q^n+q^{2n}=q^n+A_rq^{n-r}+A_n.
\]
We also have $A_n=q^{2n}-1-A_r$. On substitution, we derive that
\[
 q^{2n}-2q^n+1=A_r(q^{n-r}-1)
\]
and this yields the stated formula for $A_r$.

Necessarily, then, $q^{n-r}-1$ divides $(q^n-1)^2$ and hence $n-r$ divides $n$, by Lemma \ref{zsigmondy} (the exceptional case does not apply
since $t=2$ here). 

\end{proof}

We show finally that the conclusion of Theorem \ref{dimension_2n} is optimal. 

\begin{theorem} \label{realizing_dimension_2n}
Let $r$ be an integer that satisfies $0<r<n$ and suppose that $n-r$ divides $n$. Then $\Bil(V)$ contains a subspace of dimension $2n$
in which the rank of each non-zero element is  either $r$ or $n$.

\end{theorem}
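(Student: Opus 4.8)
The plan is to realize the required subspace concretely by identifying $V$ with the field $E=\mathbb{F}_{q^n}$, viewed as an $n$-dimensional vector space over $\mathbb{F}_q$, and then exploiting the divisibility hypothesis. Write $m=n-r$, so that $0<m<n$ and $m$ divides $n$. Recall that every $\mathbb{F}_q$-linear endomorphism $L$ of $E$ is represented uniquely as a linearized polynomial $L(x)=\sum_{i=0}^{n-1}c_ix^{q^i}$ with $c_i\in E$, and to each such $L$ attach the bilinear form $f_L(x,y)=\Tr_{E/\mathbb{F}_q}(xL(y))$. Since the trace form $(x,y)\mapsto\Tr_{E/\mathbb{F}_q}(xy)$ is non-degenerate, the right radical of $f_L$ equals $\ker L$; hence $\rank f_L=\rank L$, and $L\mapsto f_L$ is an injective $\mathbb{F}_q$-linear map from $\End_{\mathbb{F}_q}(E)$ into $\Bil(V)$. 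It therefore suffices to produce a $2n$-dimensional $\mathbb{F}_q$-space of linearized polynomials all of whose non-zero members have rank $r$ or $n$.

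The candidate is the space $\mathcal{L}=\{\,L_{a,b}:a,b\in E\,\}$ with $L_{a,b}(x)=ax+bx^{q^m}$. I would first verify that $\dim_{\mathbb{F}_q}\mathcal{L}=2n$: the exponents $q^0$ and $q^m$ are distinct since $0<m<n$, so by uniqueness of the linearized-polynomial representation the assignment $(a,b)\mapsto L_{a,b}$ is injective, and $\dim_{\mathbb{F}_q}E=n$ gives dimension $2n$. Next I would dispose of the easy rank computations: if exactly one of $a,b$ vanishes, then $L_{a,b}$ is a unit multiple either of $x$ or of the Frobenius power $x^{q^m}$, hence bijective, so $\rank L_{a,b}=n$.

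The heart of the argument, and the step I expect to require the most care, is the case $a,b\neq 0$. Here a non-zero $x\in\ker L_{a,b}$ satisfies $x^{q^m-1}=-a/b$. The map $x\mapsto x^{q^m-1}$ on $E^{*}$ has kernel of order $\gcd(q^m-1,q^n-1)=q^{\gcd(m,n)}-1=q^m-1$, where the last equality uses precisely that $m$ divides $n$. Consequently the equation $x^{q^m-1}=-a/b$ has either $0$ or $q^m-1$ solutions in $E^{*}$, so $\ker L_{a,b}$ has either $1$ or $q^m$ elements. Being an $\mathbb{F}_q$-subspace, its dimension is $0$ or $m$, whence $\rank L_{a,b}\in\{n,n-m\}=\{n,r\}$.

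This establishes that every non-zero element of $\mathcal{L}$ has rank $r$ or $n$, and transporting $\mathcal{L}$ into $\Bil(V)$ via $L\mapsto f_L$ yields the desired $2n$-dimensional subspace. As a consistency check one can exhibit genuine rank-$r$ elements by taking $a/b$ to be a $(q^m-1)$-th power in $E^{*}$, which forces $\dim\ker L_{a,b}=m$; this matches the enumeration predicted by Theorem \ref{dimension_2n}, confirming that both ranks actually occur.
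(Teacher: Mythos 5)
Your proof is correct, and at bottom it is the same construction as the paper's, but made fully explicit and self-contained. The paper sets $m=n-r$, $n=ms$, invokes Delsarte's Theorem 6.3 as a black box to get a $2s$-dimensional subspace of $M_s(\mathbb{F}_{q^m})$ with ranks $s-1$ and $s$, and then descends to $\mathbb{F}_q$ via the trace from $\mathbb{F}_{q^m}$. Your family $L_{a,b}(x)=ax+bx^{q^m}$ is exactly that Delsarte (Gabidulin) code in disguise: since $x\mapsto x^{q^m}$ is $\mathbb{F}_{q^m}$-linear, each $L_{a,b}$ is an $\mathbb{F}_{q^m}$-linear operator on $\mathbb{F}_{q^n}$ viewed as an $s$-dimensional $\mathbb{F}_{q^m}$-space, with kernel of $\mathbb{F}_{q^m}$-dimension $0$ or $1$, and $f_L(x,y)=\Tr(xL(y))$ plays the role of the paper's trace descent. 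What your version buys is a complete elementary verification of the rank dichotomy (the computation $\gcd(q^m-1,q^n-1)=q^m-1$, which is where $m\mid n$ enters), so the appeal to Delsarte is eliminated; what it costs is nothing of substance. Two small remarks: in your consistency check it is $-a/b$, not $a/b$, that must be a $(q^m-1)$-th power to force $\dim\ker L_{a,b}=m$ (and such $a,b$ certainly exist, e.g.\ $b=1$, $a=-c^{q^m-1}$); and strictly speaking the theorem as stated only requires the ranks to lie in $\{r,n\}$, so that check, while welcome, is not needed.
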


\begin{proof}
 
 We set $m=n-r$ and define the integer $s$ by $n=ms$. Consider the space $M_{s}(\mathbb{F}_{q^m})$ of $s\times s$ matrices over 
 $\mathbb{F}_{q^m}$. It is well known that this space contains a subspace of dimension $2s$ over $\mathbb{F}_{q^m}$ in which each non-zero
 element has rank $s-1$ or $s$. See, for example, Theorem 6.3 of \cite{D}.
 
 Thus, as we explained earlier, if $W$ denotes a vector space of dimension $s$ over $\mathbb{F}_{q^m}$, we may
 construct a subspace, $\N$, say, of $\Bil(W)$ in which each non-zero
 element has rank $s-1$ or $s$ and $\N$ has dimension $2s$ over $\mathbb{F}_{q^m}$. 
 Let $V$ denote $W$ considered as a vector space of dimension $ms=n$ over $\mathbb{F}_{q}$. 
 
 Given $f$ in $\N$, we define $F$ in $\Bil(V)$ by setting
 \[
  F(u,v)=\Tr(f(u,v))
 \]
for $u$ and $v$ in $V$, where $\Tr$ denotes the  trace form from $\mathbb{F}_{q^m}$ to $\mathbb{F}_{q}$. It is straightforward to
see that if $f$ is non-zero, $F$ has rank $m(s-1)=r$ or $ms=n$. The set of all elements $F$, as $f$ runs over $\N$, then is a subspace of
$\Bil(V)$ of dimension $2ms=2n$ in which each non-zero element has rank $r$ or $n$, as required.
\end{proof}

\section{Alternating bilinear forms}
\noindent Let $\M$ be a subspace of $\Alt(V)$. Then we have $f(u,u)=0$ for all $u\in V$ and all $f\in \M$. Thus, $\epsilon_u(\M)$ annihilates
$u$ and hence cannot be all of $V^*$. Consequently, we have $\dim \epsilon_u(\M)\leq n-1$. This implies the following sharpening
of Lemma \ref{estimate_for_dimension_of_kernel} in the alternating case.

\begin{lemma} \label{improved_estimate_for_dimension_of_kernel}
 Let $\M$ be a subspace of $\Alt(V)$. 
 For each non-zero element $u$ of $V$, $\dim \M_u\geq \dim \M-n+1$.
\end{lemma}

\begin{corollary} \label{rank_n-1}
Let $\mathcal{M}$ be a subspace of  $\Alt(V)$. Suppose that $n=\dim V$ is odd and $\rank f=n-1$ for each non-zero
element $f\in \mathcal{M}$. Then we have
\[
 \dim \mathcal{M}\leq n.
\]
Furthermore, if $\dim \mathcal{M}=n$, we have $\dim \M_u=1$ for all non-zero $u$ in $V$. This means that
each one-dimensional subspace of $V$ occurs as the radical of a unique one-dimensional subspace of $\M$. 

\end{corollary}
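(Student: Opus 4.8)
The plan is to apply the counting identity of Theorem~\ref{common_zeros} to $\M$ and to combine it with the sharpened kernel estimate of Lemma~\ref{improved_estimate_for_dimension_of_kernel}. First observe that since $n$ is odd, an alternating form on $V$ can have rank at most $n-1$, so the hypothesis simply says that $\M$ is a constant rank $n-1$ subspace; in particular every non-zero $f\in\M$ has a one-dimensional radical, and $n\geq 3$ (the case $n=1$ being vacuous). Writing $d=\dim\M$ and using the notation $d(u)=\dim\M_u$ (the left and right radicals coincide for alternating forms), the rank data is $A_0=1$, $A_{n-1}=q^d-1$ and $A_k=0$ otherwise, so Theorem~\ref{common_zeros} gives
\[
 \Sigma_{u\in V}q^{d(u)}=q^n+(q^d-1)q=q^n+q^{d+1}-q.
\]
Separating the term $u=0$, for which $d(0)=d$, this becomes
\[
 \Sigma_{u\neq 0}q^{d(u)}=q^n+q^{d+1}-q-q^d.
\]

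For the dimension bound I would argue by contradiction. Lemma~\ref{improved_estimate_for_dimension_of_kernel} gives $d(u)\geq d-n+1$ for every non-zero $u$, so the left-hand side above is at least $(q^n-1)q^{d-n+1}=q^{d+1}-q^{d-n+1}$. Feeding this into the displayed equality and simplifying reduces the claim to the inequality $q^d-q^{d-n+1}+q\leq q^n$. If $d\geq n+1$ this already fails at $d=n+1$, where it would force $q^n(q-1)\leq q(q-1)$, hence $q^n\leq q$, impossible since $n\geq 3$; and it fails a fortiori for larger $d$. Hence $d\leq n$.

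For the equality statement, suppose $d=n$. Then the displayed identity collapses to $\Sigma_{u\neq 0}q^{d(u)}=q^{n+1}-q=q(q^n-1)$, a sum of exactly $q^n-1$ terms. By Lemma~\ref{improved_estimate_for_dimension_of_kernel} each term satisfies $q^{d(u)}\geq q^{d-n+1}=q$, and since their sum equals $q(q^n-1)$, every term must equal $q$, forcing $d(u)=1$, that is, $\dim\M_u=1$, for all non-zero $u$. Finally, because each non-zero $f\in\M$ has a one-dimensional radical, any $f$ with $\langle u\rangle\subseteq\rad f$ lies in $\M_u$ and satisfies $\rad f=\langle u\rangle$; thus the one-dimensional space $\M_u$ is precisely the set of forms whose radical is $\langle u\rangle$, which is the asserted geometric description.

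I expect the only delicate point to be the bound direction: one must check that the lower bound supplied by Lemma~\ref{improved_estimate_for_dimension_of_kernel} is strong enough to contradict the exact count, which comes down to the elementary estimate $q^n(q-1)>q(q-1)$ valid for $n\geq 3$. The equality case is then essentially a squeezing argument, and the geometric interpretation follows immediately from the one-dimensionality of the radicals of the non-zero forms.
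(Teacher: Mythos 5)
Your proof is correct and follows essentially the same route as the paper's: both rest on the counting identity of Theorem~\ref{common_zeros} combined with the sharpened bound $d(u)\geq \dim\M-n+1$ from Lemma~\ref{improved_estimate_for_dimension_of_kernel}, and the equality case is the identical squeezing argument. The only cosmetic difference is in the dimension bound: the paper rules out $\dim\M=n+1$ by observing that $q^2$ divides $\Sigma_{u}q^{d(u)}$ but not $q^n+q^{n+2}-q$, whereas you reach the same contradiction by a direct magnitude comparison ($q^n\leq q$).
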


\begin{proof}
 
Suppose if possible that $\dim \M>n$. It suffices to assume then that $\dim \M=n+1$ and derive a contradiction. 
Following the notation of Theorem \ref{common_zeros}, we have $A_{n-1}=q^{n+1}-1$. Furthermore, Lemma \ref{improved_estimate_for_dimension_of_kernel}
implies that $d(u)\geq 2$ for all non-zero $u$, and $d(0)=n+1$. 

Theorem \ref{common_zeros} gives 
\[
 \Sigma_{u\in V}q^{d(u)}=q^n+A_{n-1}q=q^n+q^{n+2}-q
\]
This is impossible, since $q^2$ divides the left hand side, whereas it does not divide the right hand side. Hence,
$\dim \M\leq n$.

Suppose now that $\dim \M=n$. Theorem \ref{common_zeros} gives 
\[
 \Sigma_{u\in V}q^{d(u)}=q^n+A_{n-1}q=q^n+q^{n+1}-q.
\]
We know that $d(0)=n$ and $d(u)\geq 1$ for each non-zero $u\in V$. Hence
\[
 \Sigma_{u\in V}q^{d(u)}\geq q^n+(q^n-1)q=q^n+q^{n+1}-q.
\]
However, since the sum above actually equals $q^n+q^{n+1}-q$, we must have $d(u)=1$ for each non-zero $u\in V$.
This means that the subspace spanned by $u$ occurs as the radical of a unique one-dimensional subspace of $\M$, as claimed.
\end{proof}

We remark here that $n$-dimensional constant rank $n-1$ subspaces of  $\Alt(V)$ do exist, so that the second
conclusion of the corollary is not vacuous.

Suppose that $n$ is even, and $\mathcal{M}$ is a constant rank $n$ subspace of $\Alt(V)$. Theorem
\ref{dimension_bound} shows that $\dim \mathcal{M}\leq n-1$. It turns out that this bound is far from optimal, since in fact $\dim \mathcal{M}\leq n/2$. 
This improved bound is optimal, and its proof usually exploits the Pfaffian of a skew-symmetric matrix, and the
Chevalley-Warning theorem. 

We would like to show in this paper how this body of ideas can provide an optimal upper bound for $\dim \mathcal{M}$ in the case that
$n$ is even and $\rank f\geq n-2$ for all non-zero elements $f$ of $\mathcal{M}$. We begin by recalling that a subspace
of $\Alt(V)$ can be identified with a subspace of $n\times n$ skew-symmetric matrices with entries in
$\mathbb{F}_q$. In the case that $q$ is a power of 2, skew-symmetric means symmetric with all diagonal entries equal to zero.

\begin{lemma} \label{Ax_Katz} Let $n=2m$ be an even positive integer greater than $2$ and let $\mathcal{M}$ be a subspace of $\Alt(V)$. 
Let $d=\dim \mathcal{M}$ and suppose that we can express $d$ in the form $d=sm+t$, where $s$ and $t$ are integers with $s\geq 1$ and
$0<t\leq m$. Then the number of elements of rank $n$ in $\mathcal{M}$ is divisible by $q^{s}$.
\end{lemma}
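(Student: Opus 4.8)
The lemma's name indicates the strategy: the plan is to encode the rank-$n$ condition as the nonvanishing of a single polynomial and then invoke the Ax--Katz theorem on divisibility of the number of zeros of a polynomial over a finite field. The structural input is the Pfaffian. For a $2m\times 2m$ alternating matrix $A$ one has a polynomial $\mathrm{Pf}(A)$ of degree $m$ in the entries of $A$, with integer coefficients, satisfying $\det A=\mathrm{Pf}(A)^2$; moreover $A$ is non-degenerate if and only if $\mathrm{Pf}(A)\neq 0$. Because $\mathrm{Pf}$ is a universal integral polynomial, this characterization of full rank holds in every characteristic, including the case of even $q$, where an alternating form corresponds to a symmetric matrix with zero diagonal.

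First I would choose a basis $f_1,\dots,f_d$ of $\M$ and identify each $f_i$ with its alternating matrix $A_i$. A general element of $\M$ is $\sum_{i=1}^d x_i f_i$ with $x=(x_1,\dots,x_d)\in\mathbb{F}_q^d$, represented by the matrix $\sum_i x_i A_i$. Since each entry of this matrix is a linear form in $x$, the function $P(x)=\mathrm{Pf}\!\left(\sum_i x_i A_i\right)$ is a homogeneous polynomial of degree at most $m$ in the $d$ variables $x_1,\dots,x_d$, and by the Pfaffian criterion the element $\sum_i x_i f_i$ has rank $n$ exactly when $P(x)\neq 0$. Writing $A_n$ for the number of rank-$n$ elements and $Z$ for the number of $x\in\mathbb{F}_q^d$ with $P(x)=0$, we therefore have $A_n=q^d-Z$.

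Next I would apply Ax--Katz to the single polynomial $P$: its number of zeros is divisible by $q^\mu$ with $\mu=\lceil (d-m)/m\rceil$. Substituting $d=sm+t$ gives $d-m=(s-1)m+t$, and the hypothesis $0<t\leq m$ forces $\lceil t/m\rceil=1$, so $\mu=(s-1)+1=s$. Hence $q^s$ divides $Z$. Since $m\geq 2$ (as $n=2m>2$) and $t\geq 1$, we have $d=sm+t>s$, so $q^s$ also divides $q^d$; consequently $q^s\mid A_n=q^d-Z$, which is the assertion. If $P$ vanishes identically, then $A_n=0$ and the claim is trivial; if $\deg P<m$, the Ax--Katz exponent only increases, so the conclusion persists in that case as well.

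The calculation itself is short, so I expect the substance to lie in the two supporting facts rather than in any obstacle in the counting: establishing that the Pfaffian is a degree-$m$ polynomial cutting out exactly the degenerate locus in all characteristics, and applying the Ax--Katz bound with the correct exponent. The entire force of the hypothesis is captured by the identity $\mu=s$, which is precisely what the constraint $0<t\leq m$ on the remainder $t$ guarantees.
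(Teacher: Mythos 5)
Your argument is correct and follows essentially the same route as the paper: identify $\M$ with a space of skew-symmetric matrices, observe that the Pfaffian of a general element is a homogeneous polynomial of degree (at most) $m$ in the $d$ coordinates whose nonvanishing detects rank $n$, and apply the Ax--Katz theorem to get the exponent $\lceil (d-m)/m\rceil = s$, from which $q^s \mid A_n$ follows since $q^s \mid q^d$. Your extra remarks on the degenerate cases (the Pfaffian vanishing identically or having lower degree) are a minor refinement the paper does not spell out, but the substance is identical.
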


\begin{proof} 
We identify $\mathcal{M}$ with a subspace $M$, say, of $n\times n$ skew-symmetric matrices with entries in
$\mathbb{F}_q$. Under this identification, an element of $\mathcal{M}$ has rank $n$ if and only if the corresponding skew-symmetric matrix in $M$
has non-zero determinant.

Let $S$ be any $n\times n$ skew-symmetric matrix with entries in
$\mathbb{F}_q$ and let
$s_{ij}$ be the $(i,j)$--entry of $S$, where $i<j$.
The theory of the Pfaffian, \cite{Art}, Theorem 3.27,
shows that there is a homogeneous polynomial $Pf$ of degree
$m$ in $m(2m-1)$ variables, whose coefficients lie in the prime field, $\mathbb{F}_p$, 
such that
\[
\det S=Pf(s_{12}, \ldots, s_{2m-1,2m})^2.
\]

Now let $\{X_1, \ldots, X_d\}$ be a basis for $M$. We may
then express any element $S$ of $M$ in the form
\[
S=\lambda_1X_1+\cdots +\lambda_dX_d,
\]
where the $\lambda_i\in \mathbb{F}_q$. The properties of the Pfaffian
previously outlined imply that there is a homogeneous
polynomial $Q$ in $\mathbb{F}_q[z_1, \ldots, z_d]$ of degree $m$  such that
\[
\det S=Q(\lambda_1, \ldots, \lambda_d)^2.
\]
By the Ax-Katz theorem, \cite{Kat}, the number of zeros of $Q$ in $(\mathbb{F}_q)^d$  is divisible by $q^e$, where
\[
 e=\left \lceil{\frac{d-m}{m}}\right \rceil=\left \lceil{s-1+\frac{t}{m}}\right \rceil=s.
\]

 Let $A$ be the set
of elements of determinant zero in $M$ and $B$ be the set
of elements of non-zero determinant in $M$. Now the number of zeros of $Q$ in $(\mathbb{F}_q)^d$ equals the number of elements in $A$, and thus
$q^s$ divides $|A|$. Since $|A|+|B|=|M|=q^{sm+t}$, and $q^s$ divides $q^{sm+t}$, it follows that $q^s$ divides $|B|$. This means that
the number of elements of rank $n$ in $\mathcal{M}$ is divisible by $q^{s}$, as required.
\end{proof}

We can now proceed to a dimension bound for certain two-rank subspaces of $\Alt(V)$.

\begin{theorem} \label{dimension_3m} Let $n=2m$ be an even positive integer greater than $2$ and let $\mathcal{M}$ be a subspace of $\Alt(V)$. 
Suppose that $\rank f\geq n-2$ for all non-zero elements of $\mathcal{M}$. Then we have 
$\dim \mathcal{M}\leq 3m$. 
\end{theorem}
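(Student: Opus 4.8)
The plan is to argue by contradiction. Since every subspace of $\M$ inherits the hypothesis that each non-zero element has rank at least $n-2$, it suffices to rule out the case $\dim \M = 3m+1$: a counterexample of larger dimension would contain a subspace of dimension exactly $3m+1$. The first observation is a parity reduction. Every element of $\Alt(V)$ has even rank, and $n=2m$ is even, so the hypothesis $\rank f\geq n-2$ forces each non-zero $f\in\M$ to have rank exactly $n-2$ or $n$. Thus, in the notation of Theorem \ref{common_zeros}, only $A_0=1$, $A_{n-2}$ and $A_n$ are non-zero, and $A_{n-2}+A_n=q^d-1$, where $d=\dim\M$.

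Next I would bring in the two arithmetic inputs available when $d=3m+1$. Writing $d=3\cdot m+1$, the decomposition $d=sm+t$ has $s=3$ and $t=1$, and this satisfies $s\geq 1$ and $0<t\leq m$ (using $m\geq 2$, since $n>2$). Hence Lemma \ref{Ax_Katz} shows that $q^3$ divides $A_n$. On the other hand, the counting identity of Theorem \ref{common_zeros} specialises to
\[
\Sigma_{u\in V} q^{d(u)} = q^n + A_{n-2}q^2 + A_n,
\]
where $d(u)=\dim\M_u$.

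The heart of the argument is a congruence modulo $q^3$. For every $u\in V$ one has $d(u)\geq 3$: indeed $d(0)=d=3m+1$, while for $u\neq 0$ the alternating estimate of Lemma \ref{improved_estimate_for_dimension_of_kernel} gives $d(u)\geq d-n+1=m+2\geq 3$. Hence the left-hand side of the identity is divisible by $q^3$. On the right, $q^n=q^{2m}$ is divisible by $q^3$, and so is $A_n$ by the previous paragraph, so reducing modulo $q^3$ yields $A_{n-2}q^2\equiv 0\pmod{q^3}$, that is, $q\mid A_{n-2}$. But reducing $A_{n-2}+A_n=q^d-1$ modulo $q$ and using $q\mid A_n$ gives $A_{n-2}\equiv -1\pmod{q}$, contradicting $q\mid A_{n-2}$ since $q\geq 2$. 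This rules out $d=3m+1$ and hence proves $\dim\M\leq 3m$.

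I expect the step carrying the real weight to be the invocation of Lemma \ref{Ax_Katz}: the divisibility $q^3\mid A_n$ is precisely what makes the $A_n$ term vanish modulo $q^3$ and lets the congruence close. The remaining ingredients — the parity reduction to two ranks and the mechanical specialisation of Theorem \ref{common_zeros} — are routine. It is worth noting that the cruder bound $\dim\M_u\leq n-1$ available from Corollary \ref{rank_n-1} is not needed here; only the lower bound $d(u)\geq 3$ enters.
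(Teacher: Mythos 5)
Your proof is correct and takes essentially the same route as the paper: reduce to $\dim\M=3m+1$, use Lemma \ref{Ax_Katz} to get $q^3\mid A_n$, and extract the contradiction $q\mid A_{n-2}$ from the counting identity of Theorem \ref{common_zeros} together with $A_{n-2}+A_n=q^d-1$. The only cosmetic difference is that you certify divisibility of the left-hand side by $q^3$ via the bound $d(u)\geq 3$ (for which the basic Lemma \ref{estimate_for_dimension_of_kernel} already suffices), whereas the paper simply observes that the prefactor $q^{d-n}=q^{m+1}$ in $q^{m+1}|Z(\M)|$ is itself divisible by $q^3$.
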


\begin{proof}
We set $d=\dim \mathcal{M}$ and suppose if possible that $d>3m$. Then we may as well assume that $d=3m+1$, and we will proceed to derive a contradiction.
Since the only non-zero ranks of elements in $\mathcal{M}$ are $n-2$ and $n$, Theorem \ref{common_zeros} implies that we have
\[
q^{m+1} |Z(\M)|=q^{2m}+q^2A_{n-2}+A_n.
\]
Now as $d=3m+1$, $q^3$ divides $A_n$ by Lemma \ref{Ax_Katz}. Clearly, $q^3$ divides $q^{m+1}$ and $q^{2m}$. It follows that $q^3$ divides
$q^2A_{n-2}$ and hence $q$ divides $A_{n-2}$. This is a contradiction, since $A_{n-2}+A_n=q^d-1$, and hence $q$ cannot divide both
$A_{n-2}$ and $A_n$. We deduce that $d\leq 3m$. 
\end{proof}

We show at the end of Section 5 that this bound is optimal. 

Delsarte and Goethals proved the following result in \cite{DG}. Let $Y$ be a subset of $\Alt(V)$ with the property that
$\rank (f-g)\geq n-2$ whenever $f$ and $g$ are different elements of $Y$. Then, assuming that $n=\dim V$ is even, $|Y|\leq q^{2n-2}$. (This is just
a special case of their more general results.) When $Y$ is a subspace, it implies that $\dim Y\leq 2n-2$. We do not know if our
Theorem \ref{dimension_3m} bound is attainable by their association scheme-based method, but suspect that some arithmetic-algebraic geometry
method is needed to obtain improved results for subspaces.

\section{Symmetric bilinear forms}

\noindent Let $\mathcal{M}$ be a subspace of $\Symm(V)$. When we impose conditions involving the minimum rank of elements of $\mathcal{M}$,
Theorem \ref{common_zeros} provides some information, but it is usually not very effective on its own.
An additional device we employ is to extend a symmetric bilinear form to an hermitian form, as we shall now explain. 

Let $\mathbb{F}_{q^2}$ denote the finite field of order $q^2$. We may construct $\mathbb{F}_{q^2}$ as the extension field of $\mathbb{F}_{q}$
determined by a root $\epsilon$, say, of an irreducible polynomial $x^2+\lambda x+\mu\in \mathbb{F}_{q}[x]$. We can then realize 
$\mathbb{F}_{q^2}$ as elements of the form $\alpha+\beta \epsilon$, where $\alpha$ and $\beta$ are elements of $\mathbb{F}_{q}$ and
$\epsilon^2 +\lambda \epsilon+\mu=0$. We can choose $\lambda=0$ and take $-\mu$ to be any non-square when $q$ is odd. 

Let $\overline{V}=V\otimes \mathbb{F}_{q^2}$, a vector space of dimension $n$ over $\mathbb{F}_{q^2}$. We can represent vectors in
$\overline{V}$ in the form $u+\epsilon v$, where $u$ and $v$ are in $V$. Given $f\in\mathcal{M}$, we define an hermitian form $\overline{f}$
on $\overline{V}\times \overline{V}$ by setting
\[
 \overline{f}(u+\epsilon v,u_1+\epsilon v_1)=f(u,u_1)+\epsilon^{q+1}f(v,v_1)+\epsilon f(v,u_1)+\epsilon^q f(u,v_1)
\]
for all $u$, $u_1$, $v$ and $v_1$ in $V$. It is easy to see that $\overline{f}$ has the same rank as $f$. 
Let $\overline{\mathcal{M}}$ denote the $\mathbb{F}_{q}$-vector subspace of hermitian forms defined in this way. 
$\overline{\mathcal{M}}$ has the same dimension as $\mathcal{M}$, considered as a vector space over $\mathbb{F}_{q}$. 

Let $N(\M)$ denote the set of elements $u+\epsilon v$ in $\overline{V}$ such that
\[
 \overline{f}(u+\epsilon v, u+\epsilon v)=0
\]
for all $\overline{f}\in \overline{\mathcal{M}}$. It is straightforward to see that the condition 
\[
 \overline{f}(u+\epsilon v, u+\epsilon v)=0
\]
is the same as 
\[
 f(u,u)+\lambda f(u,v)+\mu f(v,v)=0.
\]
Thus we can identify $N(\M)$ with the set of all ordered pairs $(u,v)$ in $V\times V$ such that $f(u,u)+\lambda f(u,v)+\mu f(v,v)=0$.

Our next result shows how to calculate $|N(\M)|$ in terms of the ranks of elements in $\mathcal{M}$.

\begin{theorem} \label{hermitian_zeros}
Let $q$ be a power of a prime and let $V$ be a  vector space of dimension $n$ 
over $\mathbb{F}_q$. Let $\mathcal{M}$ be a subspace of $\Symm(V)$, with $\dim \mathcal{M}=d$. 
For each integer $k$ satisfying $0\leq k\leq n$, let $A_k$ denote the number of elements of rank $k$ in $\mathcal{M}$. Let
$x^2+\lambda x+\mu$ be an irreducible polynomial of degree two in $\mathbb{F}_q[x]$ and let $N(\M)$ denote the set
of ordered pairs $(u,v)$ in $V\times V$ that satisfy 
\[
 f(u,u)+\lambda f(u,v)+\mu f(v,v)=0
\]
for all $f\in \mathcal{M}$. Then we have
\[
 q^{d-n}|N(\M)|=\sum_{k=0}^n (-1)^kA_k q^{n-k}.
\]
\end{theorem}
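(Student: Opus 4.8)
The plan is to mimic the double-counting argument of Theorem~\ref{common_zeros}, but now counting incidences between forms and \emph{isotropic} vectors of the associated hermitian forms rather than radical vectors. Concretely, I would introduce the set
\[
\Omega=\{(f,w)\in\M\times\overline{V}:\overline{f}(w,w)=0\}
\]
and compute $|\Omega|$ in two ways. Writing $w=u+\epsilon v$, the condition $\overline{f}(w,w)=0$ is exactly $f(u,u)+\lambda f(u,v)+\mu f(v,v)=0$, so $\Omega$ may equally be regarded as a set of pairs $(f,(u,v))$.

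First I would fix $w$ and vary $f$. The map $f\mapsto \overline{f}(w,w)=f(u,u)+\lambda f(u,v)+\mu f(v,v)$ is an $\mathbb{F}_q$-linear functional on $\M$; hence it is either identically zero, which happens precisely when $w\in N(\M)$, or surjective onto $\mathbb{F}_q$. Thus the number of $f$ with $(f,w)\in\Omega$ is $q^d$ when $w\in N(\M)$ and $q^{d-1}$ otherwise, giving
\[
|\Omega|=q^{d}|N(\M)|+q^{d-1}\bigl(q^{2n}-|N(\M)|\bigr)=q^{d-1}\bigl((q-1)|N(\M)|+q^{2n}\bigr).
\]
Next I would fix $f$ of rank $k$ and count its isotropic vectors. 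Since $\overline{f}$ has the same rank $k$ as $f$, decomposing $\overline{V}$ as an orthogonal sum of a nondegenerate part of dimension $k$ and the radical of dimension $n-k$ shows that the number of isotropic $w$ equals $q^{2(n-k)}N_k$, where $N_k$ is the number of isotropic vectors of a \emph{nondegenerate} hermitian form on an $\mathbb{F}_{q^2}$-space of dimension $k$.

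The crux of the argument is the evaluation
\[
N_k=q^{2k-1}+(-1)^k(q^{k}-q^{k-1}).
\]
I would obtain this by diagonalizing the hermitian form as $\mathrm{diag}(a_1,\dots,a_k)$ with $a_i\in\mathbb{F}_q^{\times}$ --- a step that, crucially, remains valid in characteristic $2$ for hermitian (as opposed to symmetric bilinear) forms, since the nondegeneracy of the trace form forces the existence of an anisotropic vector --- and then reducing to the single-variable norm map $N:\mathbb{F}_{q^2}\to\mathbb{F}_q$, whose fibres over $0$ and over each nonzero value have sizes $1$ and $q+1$. Summing over $f$ then gives $|\Omega|=\sum_k A_k q^{2(n-k)}N_k$, and after substituting the value of $N_k$ and using $\sum_k A_k=q^d$, the term $q^{2n-1+d}$ cancels against the corresponding term of the first count; dividing through by $q-1$ leaves exactly $q^{d-n}|N(\M)|=\sum_{k=0}^n(-1)^kA_kq^{n-k}$.

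I expect the only genuine obstacle to be the isotropic-vector count $N_k$, and in particular its uniform treatment across all characteristics; everything else is bookkeeping of the kind already carried out in Theorem~\ref{common_zeros}. An equivalent and perhaps cleaner route, which bypasses the explicit formula for $N_k$, is to expand $|N(\M)|$ directly via a nontrivial additive character $\psi$ of $\mathbb{F}_q$: orthogonality gives
\[
|N(\M)|=q^{-d}\sum_{f\in\M}\ \sum_{(u,v)}\psi\bigl(f(u,u)+\lambda f(u,v)+\mu f(v,v)\bigr),
\]
and the inner sum for a form of rank $k$ factors, after diagonalization, into $q^{2(n-k)}$ times $k$ copies of $\sum_{w\in\mathbb{F}_{q^2}}\psi(a\,N(w))=-q$. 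This yields $(-1)^kq^{2n-k}$ for each rank-$k$ form and hence the claimed identity at once; the characteristic-$2$ subtlety again sits precisely in justifying the diagonalization of the hermitian form.
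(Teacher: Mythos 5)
Your proposal is correct, but it takes a genuinely different route from the paper. The paper's own proof is essentially a two-line reduction: after constructing the hermitian extensions $\overline{f}$ on $\overline{V}$ and observing that $\rank \overline{f}=\rank f$, it identifies $N(\M)$ with the set of common isotropic vectors of the subspace $\overline{\M}$ of hermitian forms and then simply cites Theorem 1 of the Dumas--Gow--Sheekey reference \cite{Du}, which is precisely the counting identity you prove. What you supply is a self-contained proof of that cited result, and both of your variants check out: the isotropic-vector count $N_k=q^{2k-1}+(-1)^k(q^k-q^{k-1})$ for a nondegenerate hermitian form of rank $k$ is correct (e.g.\ $N_1=1$, $N_2=q^3+q^2-q$), the diagonalization of hermitian forms over $\mathbb{F}_{q^2}/\mathbb{F}_q$ does go through uniformly in characteristic $2$ (a nonzero hermitian form always admits an anisotropic vector, by the polarization-and-degree argument in $\alpha$), and the Gauss-sum evaluation $\sum_{w}\psi(aN(w))=-q$ yields $(-1)^kq^{2n-k}$ per rank-$k$ form, giving the identity immediately. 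The bookkeeping in your double count also matches: equating $q^{2n+d-1}+q^{d-1}(q-1)|N(\M)|$ with $q^{2n+d-1}+(q-1)\sum_k(-1)^kA_kq^{2n-k-1}$ and cancelling $(q-1)$ gives exactly the stated formula. What your approach buys is independence from the external reference and an argument exactly parallel to the paper's own Theorem \ref{common_zeros}; what the paper's approach buys is brevity, at the cost of outsourcing the only nontrivial computation.
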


\begin{proof}
As we have explained above, $|N(\M)|$ equals the number of elements $z\in \overline{V}$ that satisfy $\overline{f}(z,z)=0$ for 
all $\overline{f}\in \overline{\mathcal{M}}$.
Since we have $\rank f=\rank \overline{f}$, it is clear that $\overline{M}$ contains exactly $A_k$ elements of rank $k$, for $0\le k\leq n$.
The desired formula for $|N(\M)|$ now follows from Theorem 1 of \cite{Du}. 
\end{proof}

We note that $|N(\M)|\geq 1$, since $(0,0)\in N(\M)$. Another point to observe is that the formula for $|N(\M)|$ is identical with the 
formula for $|Z(\M)|$
when every element of $\mathcal{M}$ has even rank, and so probably adds nothing to our knowledge when such a rank distribution occurs.

When we investigate subspaces of $\Symm(V)$ over fields of characteristic two, we can apply the theory of alternating bilinear forms,
as we shall proceed to demonstrate next. Let $K$ be a field of characteristic two and let $W$ be a vector space of finite dimension $m$ over $K$.
Let $\M$ be a subspace of $\Symm(W)$ and let $\M_{\Alt}$ be the subspace of alternating bilinear forms in $\M$. It is elementary to see
that $\M_{\Alt}$ has codimension at most $n$ in $\M$. 

Let 
\[
 V(\M)=\{ w\in W: f(w,w)=0 \mbox{ for all } f\in \M\}.
\]
Since we are working over a field of characteristic two, $V(\M)$ is a subspace of $V$. 

Let $\M_1$ be a complement for $\M_{\Alt}$ in $\M$. It is straightforward to see that $V(\M)=V(\M_1)$. Furthermore, if $K$ is perfect,
\[
 \dim V(\M_1)=\dim W-\dim \M_1=\dim W-\dim \M+\dim \M_{\Alt}.
\]
See, for example, the remark on p.5 of \cite{Gow3}. 

Suppose now that $w$ is a non-zero element of $V(\M)$. Since $f(w,w)=0$ for all $f$ in $\M$, $(w,w)$ is a non-trivial element of $Z(\M)$.
Therefore, if we know that $Z(\M)$ consists only of trivial elements, we can make the following deduction.

\begin{theorem} \label{size_of_alternating_subspace}

Let $K$ be a perfect field of characteristic two and let $W$ be a finite-dimensional vector space over $K$. Let $\M$ be a subspace of $\Symm(W)$.
Then if $Z(\M)$ consists only of trivial elements, we have
\[
 \dim \M_{\Alt}=\dim \M-\dim W.
\]

\end{theorem}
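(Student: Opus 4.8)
The plan is to reduce the claimed equality to the single assertion that $V(\M)=\{0\}$, and then to read off the dimension count from the formula already established just above the statement. Recall that for a perfect field $K$ of characteristic two we have
\[
\dim V(\M)=\dim W-\dim \M+\dim \M_{\Alt},
\]
so the desired identity $\dim \M_{\Alt}=\dim \M-\dim W$ is equivalent to $\dim V(\M)=0$, that is, to $V(\M)$ being the zero subspace. Thus the entire argument amounts to ruling out non-zero elements of $V(\M)$.

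First I would suppose, for contradiction, that $V(\M)$ contains a non-zero vector $w$. By the definition of $V(\M)$ this means $f(w,w)=0$ for every $f\in\M$. Translating this condition into the language of $Z(\M)$, the ordered pair $(w,w)$ lies in $Z(\M)$. Since $w\neq 0$, neither coordinate of $(w,w)$ vanishes, so $(w,w)$ is not of the form $(u,0)$ or $(0,v)$; hence it is a \emph{non-trivial} element of $Z(\M)$. This contradicts the hypothesis that $Z(\M)$ consists only of trivial elements, and therefore $V(\M)=\{0\}$.

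Finally I would substitute $\dim V(\M)=0$ into the displayed dimension formula to obtain $0=\dim W-\dim \M+\dim \M_{\Alt}$, which rearranges at once to the claimed equality. I expect that no serious obstacle arises, since the groundwork has been laid: the only genuinely delicate ingredient is the dimension formula itself, whose exactness rests on $K$ being perfect. Over a perfect field of characteristic two the passage from a symmetric form $f$ to the additive map $w\mapsto f(w,w)$ behaves like a semilinear functional with respect to the bijective Frobenius, and it is precisely this that forces $\dim V(\M)$ to drop by exactly the codimension of $\M_{\Alt}$ in $\M$. Granting that formula, the remainder is the short contradiction above, the essential point being the correct identification of $(w,w)$, for $w\neq 0$, as a non-trivial common zero.
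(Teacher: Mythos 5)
Your proof is correct and follows essentially the same route as the paper: the paper likewise observes that a non-zero $w\in V(\M)$ would make $(w,w)$ a non-trivial element of $Z(\M)$, so the hypothesis forces $V(\M)=0$, and then reads off the conclusion from the formula $\dim V(\M)=\dim W-\dim \M+\dim \M_{\Alt}$ established (via the complement $\M_1$ of $\M_{\Alt}$ and the perfectness of $K$) immediately before the statement.
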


\begin{corollary} \label{no_alternating_elements}

Let $K$ be a perfect field of characteristic two and let $W$ be a finite-dimensional vector space over $K$. Let $\M$ be a subspace of $\Symm(W)$
in which each non-zero element has maximal rank $\dim W$. 
Then $\M_{\Alt}=0$, that is, $\M$ contains no non-zero alternating elements.

\end{corollary}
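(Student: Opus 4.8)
The plan is to obtain the result by combining Theorem~\ref{size_of_alternating_subspace} with the constant-rank dimension bound. The hypothesis asserts precisely that $\M$ is a constant rank $\dim W$ subspace of $\Bil(W)$, since every nonzero element is non-degenerate. Thus Theorem~\ref{dimension_bound}, applied with $r=\dim W$, gives $\dim\M\le 2\dim W-\dim W=\dim W$ at once. If in addition I can show that $Z(\M)$ consists only of trivial elements, then Theorem~\ref{size_of_alternating_subspace} yields $\dim\M_{\Alt}=\dim\M-\dim W\le 0$; as a dimension is never negative, this forces $\dim\M_{\Alt}=0$, which is the assertion $\M_{\Alt}=0$. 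The entire proof therefore reduces to verifying the hypothesis of Theorem~\ref{size_of_alternating_subspace}.

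First I would analyse $Z(\M)$ through Corollary~\ref{condition_for_only_trivial_elements}, by which $Z(\M)$ has only trivial elements exactly when $\dim\M^L_u=\dim\M^R_u=\dim\M-\dim W$ for every nonzero $u$. As $\M$ consists of symmetric forms, the left and right radicals agree, so $\M^L_u=\M^R_u=\M_u$ and it is enough to understand $\dim\M_u$. This is where the maximal-rank hypothesis enters decisively: a nonzero $f\in\M_u$ would have the nonzero vector $u$ in $\rad f$, hence rank at most $\dim W-1$, contradicting non-degeneracy. Consequently $\M_u=0$, so $\dim\M_u=0$, for every nonzero $u$.

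The main obstacle is now laid bare. Corollary~\ref{condition_for_only_trivial_elements} requires $\dim\M_u=\dim\M-\dim W$, and having shown $\dim\M_u=0$ this holds if and only if $\dim\M=\dim W$. The bound $\dim\M\le\dim W$ from Theorem~\ref{dimension_bound}, and equally the estimate $\dim\M_u\ge\dim\M-\dim W$ of Lemma~\ref{estimate_for_dimension_of_kernel}, are consistent with this equality but do not by themselves deliver it. Establishing $\dim\M=\dim W$---equivalently, the triviality of $Z(\M)$, or the vanishing of the common isotropic subspace $V(\M)$---is thus the crux of the argument. It is precisely the step that upgrades the vanishing of each $\M_u$ (a single form annihilating all of $W$ on the left) to the vanishing of all common zeros (every form of $\M$ annihilating one fixed vector), and it is here that the remaining work, and any additional input beyond the constant-rank bound, must be concentrated; once it is in hand, the two-theorem combination of the first paragraph finishes the proof immediately.
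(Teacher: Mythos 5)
Your reduction is exactly the paper's: prove that $Z(\M)$ consists only of trivial elements and then invoke Theorem \ref{size_of_alternating_subspace}. Your observation that $\M^L_u=\M^R_u=\M_u=0$ for every non-zero $u$ (equivalently, that $\epsilon_u$ is injective, since a non-zero $f\in\M_u$ would be degenerate) is the first half of the paper's argument verbatim. The gap you flag at the end is real, and you have diagnosed its location precisely: triviality of $Z(\M)$ requires $\epsilon_u$ to be surjective as well, i.e.\ $\dim\M=\dim W$, and this does not follow from the stated hypotheses. At this exact point the paper's own proof simply writes ``since $\dim\M=\dim W$,'' so it is silently assuming maximal dimension of $\M$ as an unstated hypothesis. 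Without that hypothesis the statement is in fact false: take $\dim W=2$ and let $\M$ be the line spanned by the hyperbolic form $f(x,y)=x_1y_2+x_2y_1$, which is symmetric, alternating and non-degenerate in characteristic two, so every non-zero element of $\M$ has rank $2=\dim W$ while $\M_{\Alt}=\M\neq 0$. Hence no further work can close your gap from the hypotheses as written; the corollary must be read with $\dim\M=\dim W$ assumed, and once that is granted your argument (and the paper's) is complete.

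One further caution about your opening move: you cannot obtain $\dim\M\le\dim W$ from Theorem \ref{dimension_bound}, because that theorem and its proof (a count of $|Z(\M)|$ over $\mathbb{F}_q$) are finite-field statements, whereas the corollary is asserted for an arbitrary perfect field of characteristic two. Even where it does apply, it gives only an upper bound, which is the wrong direction for what you need: the missing ingredient is a \emph{lower} bound forcing $\dim\M=\dim W$, and that is exactly the datum the statement fails to supply.
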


\begin{proof}
 
 We claim that $Z(\M)$ consists only of trivial elements. For, as each non-zero element of $\M$ is non-degenerate, $\epsilon_w$ is an injective
 linear transformation from $\M$ into $W^*$ for all non-zero elements $w$ of $W$. Then, since $\dim \M=\dim W$, it follows that
 $\epsilon_w$ is an isomorphism, and this implies that $Z(\M)$ consists
 only of trivial elements. The corollary is thus a consequence of Theorem \ref{size_of_alternating_subspace}.
\end{proof}

We return to the study of finite fields for the rest of this section. 
We shall show how the formulae in Theorems \ref{common_zeros} and \ref{hermitian_zeros} can be combined to obtain a dimension bound for subspaces of  
$\Symm(V)$ where just two non-zero ranks occur, of opposite parity.

\begin{theorem} \label{symmetric_dimension_bound}
 Suppose that $n\geq 2$ and let $\mathcal{M}$ be a subspace of $\Symm(V)$. 
Let $r$ and $\ell$ be integers of opposite parity with $0< r<\ell\leq n$. 
Suppose also that the rank of any non-zero element of $\mathcal{M}$ is $r$ or $\ell$. Then we have 
$\dim \mathcal{M}\leq 2n-r$, except possibly when $q=2$. 
\end{theorem}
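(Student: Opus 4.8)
The plan is to argue by contradiction, and since any subspace of $\mathcal{M}$ inherits the rank hypothesis, it suffices to rule out $\dim\mathcal{M}=2n-r+1$ exactly (if $2n-r+1$ exceeds $\dim\Symm(V)$ the asserted bound holds automatically). So set $d=2n-r+1$, write $A_r,A_\ell$ for the number of elements of rank $r$ and $\ell$, and record $A_0=1$ together with $A_r+A_\ell=q^d-1$. The two counting identities I would bring to bear are Theorem \ref{common_zeros} and Theorem \ref{hermitian_zeros}; writing $P=|Z(\mathcal{M})|$ and $Q=|N(\mathcal{M})|$ and using $q^{d-n}=q^{n-r+1}$, they read
\[
q^{n-r+1}P = q^n + A_r q^{n-r} + A_\ell q^{n-\ell}, \qquad q^{n-r+1}Q = q^n + (-1)^r A_r q^{n-r} + (-1)^\ell A_\ell q^{n-\ell}.
\]

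First I would extract a congruence for $A_r$ from the $Z$-identity alone. Dividing that identity by the lowest power $q^{n-\ell}$ and reducing modulo $q$ (using $\ell\ge 1$ and, crucially, $\ell-r\ge 1$ from $r<\ell$) annihilates every term except $A_\ell$, so $A_\ell\equiv 0\pmod q$ and hence $A_r=(q^d-1)-A_\ell\equiv -1\pmod q$. The key second step is to exploit that $r$ and $\ell$ have \emph{opposite} parity, so exactly one of $(-1)^r,(-1)^\ell$ equals $-1$. If $r$ is odd I would subtract the two identities; if $r$ is even I would add them. In either case the $A_\ell q^{n-\ell}$ term cancels, and after dividing through by $q^{n-r}$ and noting that in the even case the surviving $q^n$-contribution $2q^r$ is itself divisible by $q$ (since $r\ge 1$), one is left with $2A_r=q\cdot(\text{integer})$, that is, $q\mid 2A_r$.

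Combining the two facts finishes the argument: from $A_r\equiv -1\pmod q$ we get $2A_r\equiv -2\pmod q$, while $q\mid 2A_r$ gives $2A_r\equiv 0\pmod q$, so $q\mid 2$. As $q$ is a prime power this forces $q=2$, precisely the excluded case; for $q\ne 2$ this is the desired contradiction, yielding $\dim\mathcal{M}\le 2n-r$.

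I expect the only delicate point to be the parity bookkeeping in the cancellation step: one must match adding versus subtracting to the parity of $r$ and verify that the surviving coefficient of $A_r$ is $2$ rather than $0$, so that $q\mid 2A_r$ is a genuine constraint. The remaining care is routine — confirming the divisibilities $q\mid q^{\ell}$, $q\mid q^{\ell-r}$ that drive the first congruence (these need $r\ge 1$ and $r<\ell$), and checking that passing to a $(2n-r+1)$-dimensional subspace is legitimate. The substantive input is the hermitian extension underlying Theorem \ref{hermitian_zeros}, which supplies the alternating-sign identity; everything else is elementary modular arithmetic, and it is satisfying that the $q=2$ exception emerges organically as the divisibility $q\mid 2$.
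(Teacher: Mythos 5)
Your proposal is correct and follows essentially the same route as the paper: both combine the identities of Theorems \ref{common_zeros} and \ref{hermitian_zeros}, extract $q\mid A_\ell$ from the first, cancel the $A_\ell$ term via the sign $(-1)^{\ell+1}$ (your add/subtract case split on the parity of $r$ is exactly this combination), and deduce $q\mid 2A_r$, which together with $A_r\equiv -1\pmod q$ forces $q\mid 2$. Your packaging of the final contradiction as ``$q\mid 2$, hence $q=2$'' is a slightly tidier way of stating the paper's separate treatment of $q$ odd and $q$ a power of $2$ greater than $2$, but the argument is the same.
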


\begin{proof}
 Suppose if possible that $d=\dim \mathcal{M}> 2n-r$. Then we may as well assume that $d= 2n-r+1$, and we shall derive a contradiction
to this assumption. Our hypothesis about the parity of $r$ and $\ell$ implies that  $\ell-r$ is odd. 
Theorem \ref{common_zeros}  shows that 
\[
q^{\ell+1-r} |Z(\M)|=q^{\ell}+A_{r}q^{\ell-r}+A_{\ell}.
\]
Similarly, 
\[
 q^{\ell+1-r}|N(\M)|=q^{\ell}+(-1)^{r}A_{r}q^{\ell-r}+(-1)^{\ell}A_{\ell}.
\]
We note that each of these equations implies that $q$ divides $A_{\ell}$. A simple manipulation shows that
\[
 q^{\ell+1-r}(|Z(\M)|+(-1)^{l+1}|N(\M)|)=(1+(-1)^{\ell+1})q^{\ell} +2A_{r}q^{\ell-r}.
\]
Since $q^{\ell+1-r}$ divides the left hand side, and also divides $q^\ell$, we see that $q^{\ell+1-r}$  divides $2A_rq^{\ell-r}$.
We deduce that $q$ divides $2A_{r}$. Now if $q$ is odd, this implies that $q$ divides $A_r$. This is a contradiction. For
 we now know that $q$ divides $A_r+A_\ell$, and this is impossible, since
\[
A_{r}+A_{\ell}=q^{d}-1.
\]
Likewise, if $q$ is a power of 2 with $q>2$, we see that $A_{r}$ is even, and this is again a contradiction.
It follows that $d=\dim \mathcal{M}\leq 2n-r$, except possibly if $q=2$.
\end{proof}

With a little more work, whose details we omit, we can show that when $\M$ is as above, $\dim \M\leq 2n-r+1$ when $q=2$.
We can improve this to $\dim \M\leq 2n-r$ 
when $r$ is odd and  $\ell=n$ is even, as we now prove.

\begin{theorem} \label{revised_symmetric_dimension_bound}
 Suppose that $n\geq 2$ is an even integer and let $\mathcal{M}$ be a subspace of $\Symm(V)$. 
Suppose also that the rank of any non-zero element of $\mathcal{M}$ is $r$ or $n$, where $r$ is odd. Then we have 
$\dim \mathcal{M}\leq 2n-r$. 
\end{theorem}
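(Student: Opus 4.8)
The plan is to reduce to the ground field $\mathbb{F}_2$ and then rule out $\dim\M = 2n-r+1$ by exploiting the alternating forms that so large a subspace is forced to contain. Since $r$ is odd and $\ell=n$ is even, $r$ and $n$ have opposite parity, so Theorem~\ref{symmetric_dimension_bound} already gives $\dim\M\le 2n-r$ whenever $q>2$; hence I may assume $q=2$. Suppose for a contradiction that $\dim\M\ge 2n-r+1$. Passing to a subspace, I may assume $d:=\dim\M=2n-r+1$.

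My first step is to pass to the alternating part. Over a field of characteristic two the subspace $\M_{\Alt}=\M\cap\Alt(V)$ is the kernel of the linear ``diagonal'' map sending a symmetric form to the vector of its diagonal entries, so it has codimension at most $n$ in $\M$; hence $\dim\M_{\Alt}\ge d-n=n-r+1>0$. Every non-zero element of $\M_{\Alt}$ is alternating, hence of even rank, and since the only admissible non-zero ranks are $r$ (odd) and $n$, it must have rank $n$. Thus $\M_{\Alt}$ is a non-zero constant rank $n$ subspace of $\Alt(V)$, and the Pfaffian--Chevalley--Warning bound for such subspaces recalled in the paragraph preceding Lemma~\ref{Ax_Katz} gives $\dim\M_{\Alt}\le n/2$. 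Combining the two estimates yields $n-r+1\le n/2$, that is $r\ge n/2+1$. In particular, when $r\le n/2$ this is already a contradiction, and the theorem is proved in that range.

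It remains to treat $n/2<r<n$, and this is where the principal difficulty lies. Here I would bring the counting machinery to bear, sharpened by the characteristic-two structure. For each non-zero $u$ the slice $\M_u=\{f\in\M:u\in\rad f\}$ consists of degenerate forms and so has constant rank $r$; being free of non-zero alternating elements (these would have even rank, whereas $r$ is odd), it injects under the diagonal map into the hyperplane of diagonals orthogonal to $u$. This sharpens Lemma~\ref{estimate_for_dimension_of_kernel} to $d(u)=\dim\M_u\le n-1$ for every $u\ne 0$. Feeding the rank distribution ($A_0=1$, with non-zero ranks $r$ and $n$) into both Theorem~\ref{common_zeros} and the hermitian count of Theorem~\ref{hermitian_zeros}, and using $A_r+A_n=2^d-1$, one can extract exact $2$-adic information: a short manipulation shows that $A_r$ is odd, while the number $A_n$ of non-degenerate forms is divisible by $2^{n-r}$ but by no higher power of $2$. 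The plan is to contradict this by producing a competing divisibility for $A_n$. The hard part is precisely this last step: the two counting identities are, on their own, consistent with $d=2n-r+1$ throughout the range $r>n/2$ (as one already checks for $n=4$, $r=3$), and the naive divisibility estimates available---applying Lemma~\ref{Ax_Katz} to an alternating configuration attached to $\M$, or reusing the bound $d(u)\le n-1$ in Theorem~\ref{common_zeros}---only yield $2\mid A_n$, which is too weak. Closing the argument therefore requires genuinely finer information about how the non-degenerate forms and the maximal non-degenerate alternating subspace $\M_{\Alt}$ are distributed inside $\M$, most plausibly a structural or inductive analysis of the constant rank $r$ slices $\M_u$ in tandem with $\M_{\Alt}$.
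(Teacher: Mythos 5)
Your reduction to $q=2$ and your treatment of the range $r\le n/2$ via the alternating part are correct; that second step (using $\dim\M_{\Alt}\ge \dim\M-n$ together with the $n/2$ bound for constant rank $n$ subspaces of $\Alt(V)$) is a genuinely different, characteristic-two argument that the paper does not use. But the proposal is incomplete: you explicitly leave the range $n/2<r<n$ open, and that is where the theorem has its real content.

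Moreover, your diagnosis of why the counting identities fail there is mistaken. The missing ingredient is not finer $2$-adic or structural information about $A_n$; it is the elementary \emph{size} bound $|Z(\M)|\ge 2q^n-1$, which holds for every subspace of $\Bil(V)$ because $Z(\M)$ contains the $2q^n-1$ trivial pairs $(u,0)$ and $(0,v)$. With $d=2n-r+1$, $r$ odd and $n$ even, solving the two counts of Theorems \ref{common_zeros} and \ref{hermitian_zeros} gives $A_n=(|Z(\M)|+|N(\M)|)2^{n-r}-2^n$ and $A_r=|Z(\M)|-|N(\M)|$, and substituting into $A_r+A_n=2^{2n-r+1}-1$ yields
\[
|Z(\M)|\,(2^{n-r}+1)+|N(\M)|\,(2^{n-r}-1)=2^{2n-r+1}+2^n-1 .
\]
The bounds $|Z(\M)|\ge 2^{n+1}-1$ and $|N(\M)|\ge 1$ force the left-hand side to be at least $2^{2n-r+1}+2^{n+1}-2$, which exceeds the right-hand side and gives the absurd inequality $2^{n+1}\le 2^n+1$. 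This disposes of all $0<r<n$ at once, with no need for the $\M_{\Alt}$ detour. In particular your claim that the two counting identities are consistent with $d=2n-r+1$ for $n=4$, $r=3$ is false: there the identity reads $3|Z(\M)|+|N(\M)|=79$, while $|Z(\M)|\ge 31$ already gives $3\cdot 31=93>79$.
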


\begin{proof}
Theorem \ref{symmetric_dimension_bound} shows that the dimension bound is true if $q\neq 2$. 
Suppose then that $q=2$ and $\dim \mathcal{M}= 2n-r+1$. Following the proof of Theorem \ref{symmetric_dimension_bound}, we obtain
\[
 A_n=(|Z(\M)|+|N(\M)|)2^{n-r}-2^n,\quad A_{r}=|Z(\M)|-|N(\M)|.
\]
Then since $A_n+A_{r}=2^{2n-r+1}-1$, we deduce that
\[
 |Z(\M)|(2^{n-r}+1)+|N(\M)|(2^{n-r}-1)=2^{2n-r+1}+2^n-1.
\]
Now we know that $|Z(\M)|\geq 2^{n+1}-1$  and $|N(\M)|\geq 1$. Thus we obtain
\[
 (2^{n-r}+1)(2^{n+1}-1)+2^{n-r}-1=2^{2n-r+1}+2^{n+1}-2\leq 2^{2n-r+1}+2^n-1.
\]
This leads to the inequality $2^{n+1}\leq 2^n+1$, which is clearly absurd. It follows that $\dim \mathcal{M}\leq 2n-r$ even when $q=2$.
\end{proof}

We can exploit the inequalities for $|Z(\M)|$ and $|N(\M)|$, as we did above, to obtain the values of $A_r$ and $A_{n}$ when $r$ is odd, 
$n$ is even, 
and $\dim \mathcal{M}=2n-r$. We also obtain inequalities for these numbers when $r$ is even, $n$ odd and $\dim \mathcal{M}=2n-r$ but have not been
able to establish equalities.

\begin{theorem} \label{equality_of_dimension}
Let $r$ be an integer satisfying $0<r<n$ whose parity is the opposite of that of $n$. 
 Let $\mathcal{M}$ be subspace of $\Symm(V)$ of dimension $2n-r$ in which the rank of any non-zero element is $r$ or $n$. Then we have
 \[
  A_r\geq q^n-1, \quad  A_n\leq q^{2n-r}-q^n.
 \]
 The following two cases arise. 
 
 \noindent Case (a): $r$ is odd and $n$ even. Then we have
 \[
A_r=q^n-1, \quad  A_n=q^{2n-r}-q^n, \quad |Z(\M)|=2q^n-1,\quad |N(\M)|=1
\]
and hence for each non-zero element $u$ of $V$, 
$\dim \M_u=n-r$. If $q$ is a power of $2$, $\dim \M_{\Alt}=n-r$. 

\noindent Case (b): $r$ is even and $n$ odd. Then we have $|N(\M)|\geq 2q^r-1$. Furthermore, $A_r=q^n-1$ if and only if
$|N(\M)|=2q^r-1$ if and only if $|Z(\M)|=2q^n-1$.

\end{theorem}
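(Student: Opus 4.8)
The plan is to extract everything from the two counting identities in Theorems \ref{common_zeros} and \ref{hermitian_zeros}, specialized to $d=\dim\M=2n-r$ with the only admissible ranks being $0$, $r$ and $n$ (so that $A_0=1$ and $A_r+A_n=q^{2n-r}-1$). Writing $d-n=n-r$, Theorem \ref{common_zeros} gives $q^{n-r}|Z(\M)|=q^n+A_rq^{n-r}+A_n$, while Theorem \ref{hermitian_zeros} gives $q^{n-r}|N(\M)|=q^n+(-1)^rA_rq^{n-r}+(-1)^nA_n$. Because $r$ and $n$ have opposite parity, exactly one of the signs $(-1)^r$, $(-1)^n$ is negative, and it is precisely this sign that will distinguish the two cases.

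First I would establish the universal bounds. Eliminating $A_n=q^{2n-r}-1-A_r$ from the first identity expresses $|Z(\M)|$ as an increasing affine function of $A_r$; substituting the general lower bound $|Z(\M)|\geq 2q^n-1$ and factoring yields $A_r(q^{n-r}-1)\geq (q^{n-r}-1)(q^n-1)$, whence $A_r\geq q^n-1$ and correspondingly $A_n\leq q^{2n-r}-q^n$. The same computation records the equivalence $A_r=q^n-1 \iff |Z(\M)|=2q^n-1$, which I will use repeatedly; by Corollary \ref{condition_for_only_trivial_elements} the latter equality says exactly that $Z(\M)$ has only trivial elements.

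For Case (a), with $r$ odd and $n$ even, the sign in front of $A_rq^{n-r}$ in the $N$-identity is negative, so eliminating $A_n$ there makes $|N(\M)|$ a \emph{decreasing} affine function of $A_r$. Feeding in $A_r\geq q^n-1$ then forces $|N(\M)|\leq 1$; since $(0,0)\in N(\M)$ gives $|N(\M)|\geq 1$, we conclude $|N(\M)|=1$, and tracing the equality backwards pins down $A_r=q^n-1$, $A_n=q^{2n-r}-q^n$ and $|Z(\M)|=2q^n-1$. The geometric assertions are then immediate: $Z(\M)$ consists only of trivial elements, so Corollary \ref{condition_for_only_trivial_elements} gives $\dim\M_u=d-n=n-r$ for every non-zero $u$; and when $q$ is a power of $2$ the field is perfect, so Theorem \ref{size_of_alternating_subspace} gives $\dim\M_{\Alt}=\dim\M-n=n-r$.

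For Case (b), with $r$ even and $n$ odd, the sign in front of $A_rq^{n-r}$ in the $N$-identity is now positive, so $|N(\M)|$ becomes an \emph{increasing} affine function of $A_r$. The substitution $A_r\geq q^n-1$ therefore yields only a lower bound, $|N(\M)|\geq 2q^r-1$, and the affine relations linking $A_r$, $|N(\M)|$ and $|Z(\M)|$ give the stated chain $A_r=q^n-1 \iff |N(\M)|=2q^r-1 \iff |Z(\M)|=2q^n-1$. The expected main obstacle is not any single hard estimate but the sign bookkeeping: the entire point is that the monotonicity of $|N(\M)|$ in $A_r$ reverses between the two parity cases, which is exactly why Case (a) collapses to equalities while Case (b) yields only inequalities, matching the asymmetry flagged in the introduction.
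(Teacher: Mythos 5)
Your proposal is correct and follows essentially the same route as the paper: both arguments rest entirely on the two counting identities of Theorems \ref{common_zeros} and \ref{hermitian_zeros} together with the trivial bounds $|Z(\M)|\geq 2q^n-1$ and $|N(\M)|\geq 1$, with the parity of $r$ and $n$ deciding whether the $N$-identity forces equalities or only inequalities. The only cosmetic difference is that the paper obtains $A_r\geq q^n-1$ from $d(u)\geq n-r$ and then adds or subtracts the two identities, whereas you eliminate $A_n$ and track $|Z(\M)|$ and $|N(\M)|$ as affine functions of $A_r$; these are equivalent manipulations of the same linear system.
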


\begin{proof}

Since $A_r+A_n=q^{2n-r}-1$, Theorem \ref{common_zeros} shows that
\[
 \Sigma_{u\in V}(q^{d(u)}-1)=A_r(q^{n-r}-1),
\]
where the sum extends over all non-zero vectors $u$ of $V$.

Lemma \ref{estimate_for_dimension_of_kernel} implies that $d(u)\geq n-r$ and we deduce that the sum on the left above is at least
$(q^n-1)(q^{n-r}-1)$. This yields the inequality $A_r\geq q^n-1$, as required. The inequality for
$A_n$ follows from the inequality for $A_r$.

Next, we consider case (a), where $r$ is odd and $n$ even. 
When we take $\dim \mathcal{M}=2n-r$, our previous formulae give us
\[
 2A_n=q^{n-r}(|Z(\M)|+|N(\M)|)-2q^n.
\]
Thus, since $|Z(\M)|+|N(\M)|\geq 2q^n$,  we deduce that $A_n\geq q^{2n-r}-q^n$. Since we already know that $A_n\leq q^{2n-r}-q^n$,
we obtain that $A_n=q^{2n-r}-q^n$. The  equalities for $A_r$, $A_n$, $|Z(\M)|$ and $|N(\M)$ follow from this equation. Furthermore, since
$Z(\M)$ consists only of trivial elements, $\dim \M_{\Alt}=n-r$ when $q$ is a power of 2, by Theorem \ref{size_of_alternating_subspace}.

Finally, we consider case (b), where $r$ is even and $n$ odd. Our formulae for $|Z(\M)|$ and $|N(\M)|$ and the inequality for $A_n$ yield
\[
 2A_n=q^{n-r}(|Z(\M)|-|N(\M)|)\leq 2(q^{2n-r}-q^n).
\]
This leads to the inequality $|N(\M)|\geq |Z(\M)|-2q^n+2q^r$. Since the trivial estimate $|Z(\M)|\geq 2q^n-1$ holds,
we see that $|N(\M)|\geq 2q^r-1$. Equality holds here if and only if $|Z(\M)|=2q^n-1$ if and only if $A_r=q^n-1$.

\end{proof}

The value for $|N(\M)|$ in case (a) and inequality for $ |N(\M)|$ in case (b) show that there
is an essential difference between the two cases.

We would like to describe an interesting decomposition of subspaces of $\M$ and $V$ that occurs in relation to Case (a) of 
Theorem \ref{equality_of_dimension}. Suppose that we take  $r=n/2$, where $r$ is odd, in the theorem.
The subspaces $\M_u$ are then $r$-dimensional constant rank $r$ subspaces in $\Symm(V)$, and this condition is very special, as shall now show.

\begin{theorem} \label{spread_of_V}
 Suppose that $r$ is an odd positive integer and let $n=2r$. 
Let $\mathcal{M}$ be subspace of $\Symm(V)$ of dimension $3r$ in which
the rank of any non-zero element is $r$ or $n=2r$. Suppose also that $q$ is odd and at least $r+1$. 

Then the non-zero elements of each subspace $\M_u$ have the same $r$-dimensional radical. Furthermore, the subspaces $\M_u$ are either identical
or intersect trivially. 

The radicals of the non-zero elements of the subspaces $\M_u$ form a spread of $q^r+1$ $r$-dimensional subspaces that cover $V$. Thus 
there are $q^r+1$ different subspaces $\M_u$, and any $r$-dimensional constant rank $r$ subspace of $\M$ has this form.

Let $\M_w$ be an $r$-dimensional subspace different from $\M_u$. 
Then the $n$-dimensional subspace $\M_u\oplus \M_w$ contains no elements of rank $r$ other than the elements in $\M_u$ 
and $\M_w$. Thus 
\[
(\M_u\oplus \M_w)\cap \M_z=0,\quad \M=\M_u\oplus \M_w\oplus \M_z
\] 
for any $r$-dimensional constant rank $r$ subspace $\M_z$ different from $\M_u$ and $\M_w$.
\end{theorem}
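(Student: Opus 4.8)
The plan is to deduce everything from the equality case already in hand together with one structural lemma about constant rank subspaces. Since $r$ is odd and $n=2r$ is even, Theorem~\ref{equality_of_dimension} applies in Case~(a) with $\dim\M=2n-r=3r$, and it gives $\dim\M_u=n-r=r$ for every non-zero $u\in V$, along with $A_r=q^n-1$ and $A_n=q^{2n-r}-q^n$. Any non-zero element of $\M_u$ has $u$ in its radical, hence is degenerate; as the only non-zero ranks occurring in $\M$ are $r$ and $n$, each such element has rank exactly $r$. Thus every $\M_u$ is an $r$-dimensional constant rank $r$ subspace of $\Symm(V)$, and the whole theorem rests on one fact: \emph{the non-zero elements of such a subspace share a common radical.}

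I would isolate this as a lemma: if $\mathcal N\le\Symm(V)$ is a constant rank $r$ subspace with $\dim\mathcal N=r$, $\dim V=2r$, $q$ odd and $q\ge r+1$, then all non-zero elements of $\mathcal N$ have the same $r$-dimensional radical. To prove it I would fix $f_0\in\mathcal N$ of rank $r$ with radical $R_0=\rad f_0$, pick a complement $W$ so that $f_0$ is represented by $\mathrm{diag}(0,C)$ with $C$ a non-degenerate symmetric $r\times r$ matrix, and write a general $g\in\mathcal N$ in the corresponding block form $\left(\begin{smallmatrix}P&Q\\Q^{T}&S\end{smallmatrix}\right)$. For each scalar $t$ the element $f_0+tg$ lies in $\mathcal N$ and so has rank $r$; computing its rank through the Schur complement of the generically invertible block $C+tS$ forces $tP-t^{2}Q(C+tS)^{-1}Q^{T}=0$. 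Clearing denominators turns this into a matrix polynomial identity in $t$ holding at all but finitely many points, and, once $q$ is large enough relative to $r$ for the vanishing to propagate, evaluation at $t=0$ yields $P=0$; a second pass of the same computation, now with $P=0$, is used to force $Q=0$, whence $R_0\subseteq\rad g$ and, by equality of dimensions, $\rad g=R_0$. This lemma is the main obstacle: the delicate point is making the polynomial-identity/vanishing argument go through under the stated hypothesis $q\ge r+1$ rather than the cruder $q>2r$, which is where the oddness of $q$ and the self-adjointness of symmetric forms enter. (It can alternatively be read off from the equality analysis of the constant rank bound, Theorem~\ref{constant_rank_dimension_bound}.)

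Granting the lemma, write $R_u$ for the common radical of $\M_u$, so $u\in R_u$ and $R_u=\rad f$ for every non-zero $f\in\M_u$. Passing to $V/R_u$ identifies $\M_u$ with a full-rank subspace of $\Symm(V/R_u)$, so Theorem~\ref{constant_rank_dimension_bound} applied to the $r$-dimensional space $V/R_u$ shows that the subspace $\M_{R_u}$ of all forms in $\M$ killing $R_u$ has dimension at most $r$; since it contains $\M_u$ we get $\M_{R_u}=\M_u$, and the assignment $\M_u\mapsto R_u$ is a bijection between the distinct subspaces $\M_u$ and their radicals. If $\M_u\cap\M_w$ contains a non-zero (rank $r$) form $f$ then $R_u=\rad f=R_w$, so $\M_u=\M_w$; thus distinct $\M_u$ meet only in $0$, and likewise $R_u\cap R_w\ne 0$ forces $\M_u=\M_x=\M_w$ for any $0\ne x\in R_u\cap R_w$. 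Hence the distinct radicals are pairwise trivially intersecting $r$-dimensional subspaces of $V$, and since each non-zero $u$ lies in $R_u$ they cover $V$; counting non-zero vectors gives exactly $(q^{2r}-1)/(q^{r}-1)=q^{r}+1$ of them, so there are $q^r+1$ distinct subspaces $\M_u$ whose radicals form a spread. Finally, if $\mathcal P\le\M$ is any $r$-dimensional constant rank $r$ subspace, the lemma gives it a common radical $R$; choosing $0\ne u\in R$ yields $\mathcal P\subseteq\M_u$, and equality of dimensions gives $\mathcal P=\M_u$.

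For the decomposition, let $\M_u,\M_w$ be distinct, so that $R_u\oplus R_w=V$. For non-zero $f\in\M_u$, $g\in\M_w$ and any $x\in\rad(f+g)$ we have $f(x,\cdot)=-g(x,\cdot)$; but for a symmetric form the condition $\rad f=R_u$ forces the image of $v\mapsto f(v,\cdot)$ to be the annihilator $R_u^{\circ}$, so $f(x,\cdot)\in R_u^{\circ}\cap R_w^{\circ}=(R_u+R_w)^{\circ}=0$, whence $x\in R_u\cap R_w=0$. Thus $f+g$ is non-degenerate of rank $n$, and so $\M_u\oplus\M_w$ contains no rank $r$ form outside $\M_u\cup\M_w$. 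Consequently, for any third distinct $\M_z$, a non-zero element of $(\M_u\oplus\M_w)\cap\M_z$ would be a rank $r$ form of $\M_z$ lying in $\M_u\cup\M_w$, contradicting the trivial intersection of distinct subspaces; hence $(\M_u\oplus\M_w)\cap\M_z=0$, and comparing dimensions $r+r+r=3r=\dim\M$ gives $\M=\M_u\oplus\M_w\oplus\M_z$.
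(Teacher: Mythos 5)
Your overall architecture matches the paper's proof almost step for step: establish $\dim\M_u=r$ from Theorem \ref{equality_of_dimension}, invoke a common-radical property for $r$-dimensional constant rank $r$ subspaces, deduce that distinct $\M_u$ and distinct radicals meet trivially, count the spread, and finish by showing $f+g$ is non-degenerate for non-zero $f\in\M_u$, $g\in\M_w$. Your dual-space argument for that last step ($f(x,\cdot)\in R_u^{\circ}\cap R_w^{\circ}=(R_u+R_w)^{\circ}=0$) is a correct and slightly slicker alternative to the paper's orthogonal-decomposition argument, and your identification $\M_{R_u}=\M_u$ via Theorem \ref{constant_rank_dimension_bound} is a legitimate variant of the paper's shorter argument that any $r$-dimensional constant rank $r$ subspace $\N$ sits inside $\M_v$ for $v$ in its common radical.

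The genuine gap is exactly where you flag it: the lemma that an $r$-dimensional constant rank $r$ subspace of $\Symm(V)$ (with $\dim V=2r$, $q$ odd, $q\geq r+1$) has a single common radical. The paper does not prove this either; it imports it wholesale as Theorem 2 of \cite{Gow2}, so identifying it as the load-bearing ingredient is correct. But your proposed Schur-complement proof does not close under the stated hypothesis. After clearing denominators you are comparing matrix polynomials of degree up to $r$ in $t$, and they are known to agree only at the non-zero values of $t$ for which $\det(C+tS)\neq 0$ --- at least $q-1-r$ points. Forcing equality of degree-$r$ polynomials needs $r+1$ agreement points, i.e.\ $q\geq 2r+2$, roughly twice the allowed bound; and the subsequent ``second pass to force $Q=0$'' is not spelled out at all. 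You concede this is ``the main obstacle'' and ``delicate,'' which is an honest admission that the lemma is asserted rather than proved; the parenthetical suggestion that it ``can be read off from the equality analysis of Theorem \ref{constant_rank_dimension_bound}'' is also not substantiated --- that theorem is a pure dimension bound for subspaces of $\Bil(V)$ and says nothing about radicals coinciding. So the proposal is structurally faithful to the paper but leaves its one non-routine ingredient unestablished, where the paper instead cites \cite{Gow2}.
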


\begin{proof}

Let $u$ be a non-zero element of $V$. Theorem \ref{equality_of_dimension} shows that $\dim \M_u=r$. Thus, since the non-zero elements of $\M_u$
have rank $r$, $\M_u$ is an $r$-dimensional constant rank $r$ subspace of $\M$. 
 Since we are assuming that $q$ is odd and is at least $r+1$,  
Theorem 2 of \cite{Gow2} shows that all the non-zero elements in $\M_u$ have the same radical, $U$, say, where $U$ is an $r$-dimensional subspace of 
$V$. 

Let $\N$ be any $r$-dimensional constant rank $r$ subspace of $\M$. As we have noted above, all non-zero elements of $\N$ have the same radical.
Let $v$ be any non-zero element of this radical. Then each element of $\N$ is in $\M_v$, and the equality of dimensions implies that
$\N=\M_v$. 

Let $w$ be a different non-zero element of $V$. We claim that $\M_u=\M_w$ or $\M_u\cap \M_w=0$. 
For suppose that
$f$ is a non-zero element in $\M_u\cap \M_w$. Then since all the elements of $\M_w$ have the same radical, again by Theorem 2 of \cite{Gow2}, 
and $f$ is an element of $\M_u$ with radical $U$, it follows that all elements of $\M_w$ also have radical equal to $U$. Now if we 
consider the subspace $\M_u+\M_w$ of $\M$, we see that $U$ is contained in the radical of each element of $\M_u+\M_w$. This implies, by the rank 
properties of the elements of $\M$, that $\M_u+\M_w$ is a constant rank $r$ subspace. Since the maximum dimension of a constant
rank $r$ subspace is $r$, and $\dim \M_u+\M_w\geq r$, we deduce that we must have $\M_u=\M_w$. 

Suppose, on the other hand, that $\M_u\cap \M_w=0$. Let $U$, as above, and $U_1$ be the common radicals of the non-zero
elements of $\M_u$, $\M_w$, respectively. We claim that $U\cap U_1=0$. For suppose, if possible, that $x$ is a non-zero element of 
$U\cap U_1=0$. Then $x$ is in the radical of all elements of $\M_u$ and of $\M_w$, and hence is in the radical of
all elements of $\M_u+\M_w$. This implies that all non-zero elements of $\M_u+\M_w$ have rank $r$, and this is impossible, since
$\M_u+\M_w$ has dimension $2r$. It follows that $U\cap U_1=0$, as claimed. We see then that the different radicals form a spread 
of $r$-dimensional subspaces covering $V$, since each non-zero element of $V$ is in the radical of some element of rank $r$ in $\M$.

Continuing with the notation above, we observe that since $U$ and $U_1$ both have dimension $r$ and intersect trivially, we have $V=U\oplus U_1$. 
Let $f$ and $f_1$ be non-zero elements of $\M_u$
and $\M_w$, respectively. We claim that $f+f_1$ has rank $n$. For, since $f_1$ is identically zero on $U_1\times U_1$,
and has rank $r$ on $V\times V$, it has rank $r$ on $U\times U$. Likewise, $f$ has rank $r$ on $U_1 \times U_1$ and is identically zero on
$U\times U$. Thus since $V=U\oplus U_1$ and this decomposition is orthogonal with respect to $f$ and $f_1$, we deduce that $f+f_1$ has rank $2r=n$.
This implies that
the only elements of rank $r$ in $\M_u\oplus \M_w$ are those in $\M_u$ or in $\M_w$.

Finally, let $\M_z$ be an $r$-dimensional constant rank $r$ subspace different from both $\M_u$ and $\M_w$. As we have seen that
the only elements of rank $r$ in $\M_u\oplus \M_w$ are those in $\M_u$ or in $\M_w$, and $M_z$ contains no elements of rank $r$ in common
with $\M_u$ and $\M_w$, we deduce that $(\M_u\oplus \M_w)\cap \M_z=0$. A count of dimensions then reveals that 
$\M_u\oplus \M_w\oplus \M_z=\M$.

\end{proof}

Theorem \ref{spread_of_V} also holds when $q$ is a power of 2 greater than $r$ (and more generally for any infinite field
 of characteristic 2), as we have shown in \cite{Gow3}, Theorem 6. We suspect that the theorem also holds more generally.
 As an indicator of this possibility, we present a proof of the following rather special result.
 
 \begin{theorem} \label{n=6_case}
 
 Let $K$ be an arbitrary field, subject to the condition that $|K|\geq 4$, and let $W$ be a six-dimensional vector space over $K$. Let
 $\M$ be a subspace of $\Symm(W)$ in which each non-zero element has rank $3$ or $6$. Then $\dim \M\leq 9$.
 
 Furthermore, suppose that $\dim \M=9$. Then for each non-zero element $u$ of $W$, we have $\dim \M_u=3$ and $\M_u$ is a constant rank
 $3$ subspace, whose non-zero elements all have the same radical. As $u$ ranges over $W$, the set of different radicals so obtained is a spread
 of three-dimensional subspaces covering $W$.
 
 Any constant rank $3$ subspace of $\M$ is contained in a subspace of the form $\M_u$. Thus the $\M_u$ are the maximal constant rank
 $3$-subspaces of $\M$. 
 
 Let $\M_w$ be a subspace of $\M$ different from $\M_u$. 
Then the $6$-dimensional subspace $\M_u\oplus \M_w$ contains no elements of rank $3$ other than the elements in $\M_u$ 
and $\M_w$. Thus 
\[
(\M_u\oplus \M_w)\cap \M_z=0,\quad \M=\M_u\oplus \M_w\oplus \M_z
\] 
for any $3$-dimensional constant rank $3$ subspace $\M_z$ different from $\M_u$ and $\M_w$.

 \end{theorem}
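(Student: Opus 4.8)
The plan is to follow the template of Theorem \ref{spread_of_V}, replacing the finite-field counting arguments (which are unavailable over an arbitrary field) by structural results on constant rank subspaces. Throughout set $r=3$ and $n=6$, so the two permitted non-zero ranks are $r$ and $n$, and note that the hypothesis $|K|\geq 4$ is exactly the condition $|K|\geq r+1$ under which a constant rank $3$ subspace of $\Symm(W)$ has all its non-zero elements sharing a common radical; this fact is supplied by Theorem 2 of \cite{Gow2} in odd characteristic and by \cite{Gow3} in characteristic two. It immediately yields the needed dimension bound for such subspaces: if $\N$ is a constant rank $3$ subspace of $\Symm(W)$ with common radical $U$ (necessarily three-dimensional), then $\N$ embeds into the non-degenerate symmetric forms on the three-dimensional quotient $W/U$, and for fixed non-zero $\bar e\in W/U$ the map $f\mapsto f(\bar e,\cdot)$ into $(W/U)^*$ is injective (any $f$ in its kernel would have $\bar e$ in its radical, forcing $f=0$), whence $\dim\N\leq 3$.

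First I would prove $\dim\M\leq 9$. Suppose for contradiction that $\dim\M\geq 10$ and pick a non-zero $u\in W$. By Lemma \ref{estimate_for_dimension_of_kernel} we have $\dim\M_u\geq\dim\M-6\geq 4$. Every non-zero element of $\M_u$ has $u$ in its radical, hence rank at most $5$, hence rank exactly $3$; so $\M_u$ is a constant rank $3$ subspace of dimension at least $4$, contradicting the bound $\dim\M_u\leq 3$ of the previous paragraph. Hence $\dim\M\leq 9$. When $\dim\M=9$ the same inequality gives $\dim\M_u\geq 3$, while the constant rank bound gives $\dim\M_u\leq 3$; thus $\dim\M_u=3$ for every non-zero $u$, and $\M_u$ is a constant rank $3$ subspace whose non-zero elements all share a common three-dimensional radical.

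The geometric conclusions then transcribe Theorem \ref{spread_of_V} almost verbatim. Since $\dim\M_u=3>0$, each non-zero $u$ lies in the radical of some rank $3$ element of $\M$, so the radicals obtained cover $W$; and the dichotomy ``$\M_u=\M_w$ or $\M_u\cap\M_w=0$'', together with $U\cap U_1=0$ when the two differ, follows exactly as in Theorem \ref{spread_of_V} from the observation that a common radical vector would force $\M_u+\M_w$ to be a constant rank $3$ subspace of dimension exceeding $3$. This produces the spread of three-dimensional subspaces. Maximality is the same remark: any constant rank $3$ subspace $\N\leq\M$ has a common radical, so choosing a non-zero $v$ in it gives $\N\subseteq\M_v$. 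Finally, for distinct $\M_u,\M_w$ with radicals $U,U_1$ one has $W=U\oplus U_1$, and for non-zero $f\in\M_u$, $f_1\in\M_w$ this decomposition is orthogonal for $f+f_1$ with $f+f_1$ non-degenerate on each summand, so $f+f_1$ has rank $6$; hence $\M_u\oplus\M_w$ contains no rank $3$ element outside $\M_u$ and $\M_w$, which forces $(\M_u\oplus\M_w)\cap\M_z=0$ and, by a dimension count, $\M=\M_u\oplus\M_w\oplus\M_z$.

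The main obstacle is the dimension bound, and specifically the input $\dim\M_u\leq 3$: over a finite field this would come for free from the counting identity of Theorem \ref{common_zeros} (as in Theorem \ref{equality_of_dimension}), but over an arbitrary field it must instead rest on the common-radical theorem for constant rank symmetric subspaces, whose proof is genuinely different in odd and even characteristic. Everything downstream — the spread, the maximality statement, and the direct sum decomposition — is then a field-independent repetition of the arguments already used for Theorem \ref{spread_of_V}.
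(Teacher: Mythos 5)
Your proof is correct and follows the same overall skeleton as the paper's: bound $\dim \M_u$ from below via Lemma \ref{estimate_for_dimension_of_kernel}, from above via a dimension bound for constant rank $3$ subspaces of $\Symm(W)$, and then transcribe the geometric arguments of Theorem \ref{spread_of_V}. The difference lies in where the key input comes from. The paper cites Corollary 9 of \cite{Du} as a single package giving both $\dim \M_u \leq 3$ and the common-radical property, and then handles the maximality claim by a case split on $\dim \N \in \{1,2\}$ using Lemma 4 of \cite{Du}; you instead take the common-radical theorem as the sole external input and deduce $\dim \N \leq 3$ from it by the injectivity of $f \mapsto \overline{f}(\overline{e},\cdot)$ on the three-dimensional quotient $W/U$ --- a self-contained reduction that also lets you treat constant rank $3$ subspaces of every dimension uniformly in the maximality step, rather than splitting into the one- and two-dimensional cases. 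One caveat: you source the common-radical property from Theorem 2 of \cite{Gow2}, which this paper invokes only for finite fields of odd order $q \geq r+1$ (with \cite{Gow3} covering characteristic $2$, including infinite fields); for an infinite field of characteristic zero or odd characteristic the applicability of that reference is not established here, which is presumably why the author reaches for \cite{Du} instead in this arbitrary-field setting. The mathematical fact you need is exactly the one the paper also relies on, but you should verify that your chosen reference genuinely covers arbitrary fields with at least four elements before treating the argument as complete.
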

 
 \begin{proof}
Lemma \ref{estimate_for_dimension_of_kernel} implies that $\dim \M_u\geq \dim \M-6$. It is also
clear from the rank properties of elements of $\M$ that $\M_u$ is a constant rank 3 subspace of $\M$. We deduce from Corollary 9 of \cite{Du} 
that $\dim \M_u\leq 3$ and hence $\dim \M\leq 9$.

Suppose from now on that $\dim \M=9$. The argument above implies that $\dim \M_u=3$ and Corollary 9  of \cite{Du} yields
that all non-zero elements of $\M_u$ have the same radical. 

Virtually all the remainder of the proof follows that of Theorem \ref{spread_of_V}. The only part that requires further attention
is as follows. Let $\N$ be a non-zero constant rank 3 subspace of $\M$. Suppose first that $\dim \N=1$ and let $f$ span $\N$. Let
$w$ be a non-zero element of $\rad f$. Then $f\in \M_w$ and hence $\N$ is  contained in $\M_w$.

Suppose next that $\dim \N=2$. It follows from Lemma 4 of \cite{Du} that all non-zero elements of $\N$ have the same radical. If $v$ is 
a non-zero element of this common radical, $\N$ is contained in $\M_v$. This completes the proof.
  
 \end{proof}

We now provide examples which show that, in some cases, the bound described in Theorem \ref{symmetric_dimension_bound} is optimal.

\begin{theorem} \label{bounds_attained} Let $r$ be an integer that satisfies $0<r<n$ and suppose that $n-r$ divides $n$. 
Then $\Symm(V)$ contains a subspace of dimension $2n-r$
in which the rank of each non-zero element is  either $r$ or $n$.

\end{theorem}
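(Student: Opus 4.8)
We want to construct, for each $r$ with $0 < r < n$ and $n-r \mid n$, a subspace of $\Symm(V)$ of dimension $2n-r$ all of whose non-zero elements have rank $r$ or $n$.

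The plan is to imitate the trace-form construction used in the proof of Theorem \ref{realizing_dimension_2n}, but starting from an appropriate subspace of \emph{symmetric} (rather than arbitrary) bilinear forms over an extension field. Set $m = n-r$ and define $s$ by $n = ms$, so $r = m(s-1)$. First I would pass to the extension field $\mathbb{F}_{q^m}$ and let $W$ be a vector space of dimension $s$ over $\mathbb{F}_{q^m}$. The key input will be the existence of a subspace $\N \leq \Symm(W)$, of dimension $2s-1$ over $\mathbb{F}_{q^m}$, in which each non-zero element has rank $s-1$ or $s$. Granting such an $\N$, I would then define the trace-down map $f \mapsto F$ by
\[
 F(u,v) = \Tr\bigl(f(u,v)\bigr),
\]
where $\Tr$ is the trace from $\mathbb{F}_{q^m}$ to $\mathbb{F}_q$ and $V$ denotes $W$ regarded as an $n$-dimensional space over $\mathbb{F}_q$. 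Since $f$ is symmetric and $\Tr$ is $\mathbb{F}_q$-linear, each $F$ is a symmetric $\mathbb{F}_q$-bilinear form, so the image $\M$ lies in $\Symm(V)$.

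Next I would verify the rank and dimension bookkeeping. The map $f \mapsto F$ is $\mathbb{F}_q$-linear and injective (non-degeneracy of the trace form guarantees that $F = 0$ forces $f = 0$), so $\dim_{\mathbb{F}_q}\M = m \cdot \dim_{\mathbb{F}_{q^m}}\N = m(2s-1) = 2ms - m = 2n - r$, exactly as required. For the rank statement, I would use the standard fact — already invoked in the proof of Theorem \ref{realizing_dimension_2n} — that if $f$ has rank $k$ over $\mathbb{F}_{q^m}$ then $F = \Tr \circ f$ has rank $mk$ over $\mathbb{F}_q$. Hence a non-zero $f$ of rank $s-1$ yields an $F$ of rank $m(s-1) = r$, and one of rank $s$ yields rank $ms = n$, so every non-zero element of $\M$ has rank $r$ or $n$.

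The main obstacle is securing the starting object $\N$: a $(2s-1)$-dimensional subspace of $\Symm(W)$ over $\mathbb{F}_{q^m}$ whose non-zero elements have only the two ranks $s-1$ and $s$. This is precisely the $m=1$, $n=s$, $r=s-1$ case of the very theorem being proved, so the construction is genuinely a \emph{reduction} to the two-rank symmetric problem over an arbitrary finite field when $r = n-1$ (i.e.\ $n-r = 1$). I would therefore first establish the base case $r = n-1$ directly — one natural source is a space of companion-type or Hankel-type symmetric matrices realizing consecutive ranks $n-1$ and $n$, whose existence over any finite field is classical; the dimension bound $2n-r = n+1$ here matches known optimal symmetric constructions. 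Once the base case $\N$ of dimension $2s-1$ over $\mathbb{F}_{q^m}$ is in hand, the trace-restriction step above promotes it to the desired subspace $\M$ of dimension $2n-r$ over $\mathbb{F}_q$, completing the proof. The one subtlety to check carefully is that the rank-multiplicativity $\rank_{\mathbb{F}_q}(\Tr\circ f) = m \cdot \rank_{\mathbb{F}_{q^m}}(f)$ preserves symmetry and does not accidentally drop the rank; this follows because $\Tr$ is surjective and $\mathbb{F}_q$-bilinearly non-degenerate, so the radical of $F$ over $\mathbb{F}_q$ coincides with the radical of $f$ viewed over $\mathbb{F}_q$, whose $\mathbb{F}_q$-dimension is $m$ times its $\mathbb{F}_{q^m}$-dimension.
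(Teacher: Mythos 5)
Your overall strategy (reduce to a consecutive-rank subspace over the extension field $\mathbb{F}_{q^m}$ and then trace down to $\mathbb{F}_q$) is the same as the paper's, but there is an arithmetic error in the dimension bookkeeping that makes your stated key input impossible. Since $2n-r = 2ms-m(s-1) = m(s+1)$, the subspace $\N\leq\Symm(W)$ you need over $\mathbb{F}_{q^m}$ must have dimension $s+1$, not $2s-1$: your computation $m(2s-1)=2ms-m=2n-r$ is wrong, because $m=n-r$, so $2ms-m=2n-(n-r)=n+r$, which equals $2n-r$ only when $n=2r$, i.e.\ $s=2$. Worse, for $s\geq 3$ a $(2s-1)$-dimensional subspace of $\Symm(W)$ whose non-zero elements have only the ranks $s-1$ and $s$ cannot exist, by Theorem \ref{symmetric_dimension_bound} applied over $\mathbb{F}_{q^m}$: the ranks $s-1$ and $s$ have opposite parity, so the dimension is at most $2s-(s-1)=s+1<2s-1$. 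The object you propose to trace down is therefore not available, and your own description of the base case as ``the $m=1$, $n=s$, $r=s-1$ case of the theorem'' already contradicts the figure $2s-1$, since that case has dimension $s+1$.

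The repair is exactly what your last paragraph gestures at: take $\N$ of dimension $s+1$, so that $m(s+1)=2n-r$ as required. The paper constructs this base case explicitly rather than citing it as classical: it takes $L=\mathbb{F}_{q^{m(s+1)}}$ as a degree-$(s+1)$ extension of $\mathbb{F}_{q^m}$, forms the $(s+1)$-dimensional space of non-degenerate symmetric forms $f_z(x,y)=\Tr(zxy)$ on $L$, and restricts them to an $s$-dimensional $\mathbb{F}_{q^m}$-subspace $W$; restricting a non-degenerate form to a hyperplane can drop the rank by at most one, so each non-zero restriction has rank $s-1$ or $s$. Your trace-down step and the rank-multiplicativity argument ($\rank F=m\cdot\rank f$ via non-degeneracy of the trace, preserving symmetry) are correct and match the paper; only the dimension of the starting object, and an actual construction of it, need to be fixed.
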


\begin{proof}

 We set $m=n-r$ and define the integer $s$ by $n=ms$. 
 Let $L$ denote the field of order $q^{m(s+1)}$, which we consider to be
 a vector space of dimension $s+1$ over $\mathbb{F}_{q^m}$, and let $\Tr:L\to \mathbb{F}_{q^m}$ denote the usual trace form. 
 For each element $z$ of $L$,
 we define an element $f_z$, say, of $\Symm(L)$ by setting
\[
 f_z(x,y)=\Tr(z(xy))
\]
for all $x$ and $y$ in $L$. Each form $f_z$ is non-degenerate if $z\neq 0$, since the trace form is non-zero.

Let $W$ be any $\mathbb{F}_{q^m}$-subspace of dimension $s$ in $L$, and let $\mathcal{N}$ denote the restriction of the subspace of bilinear forms
$f_z$ to $W\times W$. Assume now that $z\neq 0$. Then since $f_z$ is non-degenerate, its restriction to $W\times W$ has rank $s-1$ or $s$. 
Therefore, $\mathcal{N}$ is an $(s+1)$-dimensional subspace of  $\Symm(W)$, in which each non-zero
element has rank $s-1$ or $s$. 

Let $V$ denote $W$ considered as a vector space of dimension $ms=n$ over $\mathbb{F}_{q}$. Using the trace form from $\mathbb{F}_{q^m}$
to $\mathbb{F}_{q}$, as we did in the proof of Theorem \ref{realizing_dimension_2n}, we may then obtain a subspace, $\M$, say, of $\Symm(V)$ 
of dimension $m(s+1)=2n-r$, in which each non-zero element has rank  $m(s-1)=r$ or $ms=n$.

\end{proof}

It seems likely, especially taking account of the divisibility hypothesis in Theorem \ref{bounds_attained}, that $n-r$ must divide $n$ in
the circumstances of Theorem \ref{symmetric_dimension_bound}, but we have not succeeded in proving this. Theorems 
\ref{dimension_2n} and \ref{symmetric_dimension_bound}
are somewhat similar, but in one case the divisibility condition that $n-r$ must divide $n$ emerges naturally, whereas
in the other this condition is not apparent. We note that, if $n$ above is odd, then $n-r$ must also be odd, and hence $r$ is even; thus
$n$ are $r$ have opposite parity. This difference in parity is not guaranteed if $n$ is even. It is possible that the parity hypothesis
is actually redundant in Theorem \ref{symmetric_dimension_bound}. The next theorem provides a small confirmation that this might
be true.

\begin{theorem} \label{four_dimensional_case}
 Let $V$ be a four-dimensional vector space over $\mathbb{F}_q$, where $q$ is odd.
 Let $\mathcal{M}$ be subspace of $\Symm(V)$ in which
the rank of any non-zero element is $2$ or $4$. Then we have $\dim \M\leq 6$. 

Moreover, suppose that $\dim \M=6$. Then if $u$ is a non-zero element of $V$,  $\M_u$ is a two-dimensional constant rank $2$ subspace of $\M$
and  $\M$ contains exactly $q^4-1$ elements of rank $2$.
\end{theorem}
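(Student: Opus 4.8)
The plan is to work with the subspaces $\M_u$ and the sharp dimension bound for constant rank symmetric subspaces, rather than the $|Z(\M)|$/$|N(\M)|$ comparison of Theorem \ref{symmetric_dimension_bound}. The reason the comparison is useless here is that $r=2$ and $\ell=n=4$ have the \emph{same} parity, so by the remark following Theorem \ref{hermitian_zeros} the formula for $|N(\M)|$ coincides with that for $|Z(\M)|$ and supplies no new information. This is precisely the parity obstruction flagged before the statement, and circumventing it is the crux of the proof.

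First I would prove $\dim \M\leq 6$. Fix a non-zero $u\in V$. Every non-zero $f\in\M_u$ has $u\in\rad f$, hence $\rank f\leq n-1=3$; since the only admissible non-zero ranks are $2$ and $4$, we get $\rank f=2$. Thus $\M_u$ is a constant rank $2$ subspace of $\Symm(V)$. On the one hand Lemma \ref{estimate_for_dimension_of_kernel} gives $\dim\M_u\geq \dim\M-n$. On the other hand, because $q$ is odd we have $q\geq 3=r+1$, so the sharp bound for constant rank $r$ symmetric subspaces (Theorem 2 of \cite{Gow2}, as invoked in the proof of Theorem \ref{spread_of_V}, giving maximal dimension $r$) forces $\dim\M_u\leq 2$. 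If $\dim\M\geq 7$ the first inequality yields $\dim\M_u\geq 3$, a contradiction. Hence $\dim\M\leq 6$.

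Now suppose $\dim\M=6$. The same two inequalities read $\dim\M_u\geq 2$ and $\dim\M_u\leq 2$, so $\dim\M_u=2$ for every non-zero $u$, and each $\M_u$ is a two-dimensional constant rank $2$ subspace, as claimed. To count the rank $2$ elements I would double count the incidence pairs $(\langle u\rangle, f)$, where $\langle u\rangle$ is a projective point of $V$, the element $f\in\M$ has rank $2$, and $u\in\rad f$. Fixing $\langle u\rangle$, the admissible $f$ are exactly the $q^2-1$ non-zero elements of $\M_u$; summing over the $(q^4-1)/(q-1)$ projective points gives $(q^3+q^2+q+1)(q^2-1)$ incidences. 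Fixing $f$ instead, the number of projective points in the two-dimensional radical $\rad f$ is $q+1$, so the total is $A_2(q+1)$. Equating the two counts and using $(q^2-1)/(q+1)=q-1$ gives $A_2=(q^3+q^2+q+1)(q-1)=q^4-1$, as required. As a cross-check, one can feed $d(u)=2$ for $u\neq 0$ and $d(0)=6$ into Theorem \ref{common_zeros} and recover the same value of $A_2$.

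The only genuinely delicate point is the opening reduction: recognizing that the even-rank situation defeats the Hermitian counting of Theorem \ref{symmetric_dimension_bound}, and that the correct substitute is the constant rank bound applied to the $\M_u$, available precisely because $q$ odd forces $q\geq r+1$ when $r=2$. Everything after that reduction is a short dimension count and an elementary incidence argument.
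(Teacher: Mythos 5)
Your proposal follows essentially the same route as the paper: both proofs observe that $\M_u$ is a constant rank $2$ subspace, sandwich $\dim\M_u$ between the rank--nullity lower bound $\dim\M-4$ and an upper bound of $2$, and then count the rank $2$ elements (you by a direct point--radical incidence count over the $(q^4-1)/(q-1)$ projective points, the paper by feeding $d(u)=2$ into Theorem \ref{common_zeros}; these are equivalent and both give $A_2=q^4-1$). Your opening remark about why the $|Z(\M)|$ versus $|N(\M)|$ comparison is useless here, since $2$ and $4$ have the same parity, correctly identifies the obstruction.

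The one step I would ask you to repair is the justification of $\dim\M_u\leq 2$. You invoke ``Theorem 2 of \cite{Gow2}, as used in the proof of Theorem \ref{spread_of_V}, giving maximal dimension $r$ for a constant rank $r$ subspace.'' But in Theorem \ref{spread_of_V} that citation is made for \emph{odd} $r$ (and what is actually attributed to Theorem 2 of \cite{Gow2} there is the common-radical property of an $r$-dimensional constant rank $r$ subspace, not the dimension bound itself); elsewhere the paper is explicit that the bound ``$\dim\leq r$ for constant rank $r$ subspaces of $\Symm(V)$'' is drawn from results that require $r$ odd. Here $r=2$ is even, which is exactly the delicate case. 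The paper circumvents this by using that every element of $\M_u$ contains $u$ in its radical, so $\M_u$ descends to a constant rank $2$ subspace of $\Symm(V/\langle u\rangle)$ with $\dim V/\langle u\rangle=3$, i.e.\ a constant corank-one subspace of a three-dimensional space, to which Corollary 1 and Theorem 7 of \cite{Gow2} apply and yield $\dim\M_u\leq 2$. Your conclusion is correct, but you should either perform this reduction to the three-dimensional quotient or cite a result that genuinely covers even rank; as written, the crucial inequality rests on a statement not available in the form you use it.
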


\begin{proof}

Let $u$ be a non-zero element of $V$. We know from Lemma \ref{estimate_for_dimension_of_kernel} that
\[
 \dim \M\leq \dim \M_u+4
\]
and $\M_u$ is a constant rank 2 subspace of $\Symm(V)$. 

Let $U$ be the one-dimensional subspace spanned by $u$ and let $\overline{V}$ denote the quotient space $V/U$. Each non-zero
element of $\M_u$ induces an element of $\Symm(\overline{V})$ of rank 2. Since we are working over a finite field of odd
characteristic, Corollary 1 and Theorem 7 of \cite{Gow2} imply that $\dim \M_u\leq 2$. It follows that $\dim\M\leq 6$, as claimed.

Let us now suppose that $\dim \M=6$. The argument above shows that $\dim \M_u\leq 2$. On the other hand, Lemma \ref{estimate_for_dimension_of_kernel}
implies that $\dim \M_u\geq 2$ and hence $d(u)=2$ for all non-zero $u$ in $V$. 

Theorem \ref{common_zeros} yields that 
\[
\Sigma_{u\in V}q^{d(u)}=q^6+(q^4-1)q^2=q^4+A_2q^2+A_4=q^4+q^6-1+(q^2-1)A_2.
\]
We readily obtain $A_2=q^4-1$, as required.

\end{proof}

\section{Subspaces of $\Symm(V)$ where three non-zero ranks occur}

\noindent Our intention in this section is to use Theorem \ref{symmetric_dimension_bound} to obtain an upper bound for the dimension
of a subspace of $\Symm(V)$ in which three non-zero ranks occur, one of which is $n$, and where the other two ranks have opposite parities.
We then specialize to the case when these three ranks are $n$, $n-1$ and $n-2$, and obtain fairly detailed information when $n$ is odd.

\begin{theorem} \label{three_ranks}
Let $\M$ be a subspace of $\Symm(V)$ in which the rank of any non-zero element is either $r$, $\ell$ or $n$, 
where $r$ and $\ell$ are positive integers of opposite parity satisfying $r<\ell<n$. Then we have $\dim \M\leq 3n-r-2$,
and $\dim \M_u\leq 2n-2-r$ for all non-zero $u$ in $V$, unless possibly when $q=2$.
\end{theorem}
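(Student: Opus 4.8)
The plan is to bound $\dim \M_u$ first and then feed the result into the kernel estimate of Lemma \ref{estimate_for_dimension_of_kernel}. Fix a non-zero $u \in V$ and consider $\M_u$, the subspace of forms in $\M$ having $u$ in their radical. Every non-zero element of $\M_u$ is degenerate, so its rank is at most $n-1$; since the only admissible ranks are $r$, $\ell$ and $n$, and $\ell < n$, the non-zero elements of $\M_u$ have rank $r$ or $\ell$ exclusively.

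The decisive step is to pass to a quotient in order to lower the ambient dimension by one. Let $U=\langle u\rangle$ and $\bar V = V/U$, a space of dimension $n-1$. Each $f \in \M_u$ satisfies $U \subseteq \rad f$, hence induces a symmetric form $\bar f$ on $\bar V$ with $\rad \bar f = (\rad f)/U$, so that $\rank \bar f = \rank f$. The assignment $f \mapsto \bar f$ is linear and injective, so it identifies $\M_u$ with a subspace $\bar \M_u$ of $\Symm(\bar V)$ of the same dimension, in which every non-zero element has rank $r$ or $\ell$.

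Next I would apply Theorem \ref{symmetric_dimension_bound} to $\bar \M_u$. The hypotheses transfer verbatim: $r$ and $\ell$ retain their opposite parity, while $\ell < n$ gives $\ell \leq n-1 = \dim \bar V$ and $r < \ell$ forces $\dim \bar V \geq 2$. The theorem therefore yields $\dim \bar \M_u \leq 2(n-1)-r = 2n-2-r$, except possibly when $q=2$, and hence $\dim \M_u \leq 2n-2-r$. This is exactly the second assertion of the theorem. (If $\M_u=0$ the bound is trivial, and the argument is unaffected when only one of the ranks $r,\ell$ actually occurs, since the proof of Theorem \ref{symmetric_dimension_bound} does not require both to be present.)

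For the bound on $\dim \M$ itself, I would combine this with Lemma \ref{estimate_for_dimension_of_kernel}, which gives $\dim \M - n \leq \dim \M_u$. Chaining the two inequalities produces $\dim \M \leq (2n-2-r)+n = 3n-r-2$, again outside the possible exception $q=2$. I expect no serious obstacle here: the entire substance lies in the quotient reduction to dimension $n-1$, which converts the three-rank problem into the two-rank situation already settled in Theorem \ref{symmetric_dimension_bound}. The remaining points are routine verifications, namely that the quotient preserves rank, that $f \mapsto \bar f$ is injective on $\M_u$, and that the parity and size constraints $r<\ell\leq n-1$ survive the passage to $\bar V$.
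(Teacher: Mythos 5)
Your argument is correct and is essentially identical to the paper's own proof: both pass to the quotient $V/\langle u\rangle$ of dimension $n-1$, apply Theorem \ref{symmetric_dimension_bound} to the induced two-rank subspace to get $\dim \M_u\leq 2(n-1)-r$, and then combine with Lemma \ref{estimate_for_dimension_of_kernel} to obtain $\dim\M\leq 3n-r-2$. Your added checks (injectivity of $f\mapsto\bar f$, rank preservation, $\ell\leq n-1$) are details the paper leaves implicit but are accurately handled.
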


\begin{proof}
We use the notation of Lemma \ref{estimate_for_dimension_of_kernel}. Let $u$ be a non-zero element of $V$. We have $\dim \M\leq n+\dim \M_u$ 
and each non-zero element
of $\M_u$ has rank $r$ or $\ell$. 

Let $U$ denote the one-dimensional subspace of $V$ spanned by $u$ and  $\overline{V}$ denote the quotient space $V/U$. 
Then each non-zero element of $\M_u$ induces an element
of $\Symm(\overline{V})$ of rank $r$ or $\ell$. Thus, since $\dim \overline{V}=n-1$ and $r$ and $\ell$ are assumed to have opposite parity, 
Theorem \ref{symmetric_dimension_bound} implies that $\dim \M_u\leq 2(n-1)-r$, except possibly if  $q=2$. Thus $\dim \M\leq 3n-r-2$, 
except possibly if $q=2$.
\end{proof}

The bound just obtained for $\dim \M$ is unlikely to be optimal, except in the case when $r=n-2$, a case we shall investigate more
thoroughly in this section. Consider, for example, what happens when $r=n-6$ and $\ell=n-3$. Theorem \ref{three_ranks} yields that
$\dim \M\leq 2n+4$. Now if 3 divides $n$, it is certainly possible to construct by field extension techniques subspaces $\N$ of dimension $2n$
in which only the three non-zero ranks $n-6$, $n-3$ and $n$ occur. We suspect that $2n$ is actually the best bound in this case, not $2n+4$.

\begin{corollary} \label{rank_at_least_n-2}
 Suppose that $n\geq 3$. Let $\M$ be a subspace of $\Symm(V)$ in which each non-zero element has rank at least $n-2$. Then $\dim \M\leq 2n$, 
 except possibly if both $n$ is even and $q=2$. Furthermore, except possibly if $n$ is even and $q=2$, we have $\dim \M_u=n$ for all non-zero
 elements $u$ in $V$. 
\end{corollary}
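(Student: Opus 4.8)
The plan is to reduce everything to a statement in the quotient space $V/U$ and then apply the two two-rank bounds already proved. Fix a non-zero $u\in V$ and let $U$ be the one-dimensional subspace it spans. Every $f\in\M_u$ has $u$ in its radical, so it descends to a symmetric form $\overline f$ on $\overline V=V/U$ with $\rank\overline f=\rank f$, and $f\mapsto\overline f$ injects $\M_u$ into $\Symm(\overline V)$, where $\dim\overline V=n-1$. Since each non-zero element of $\M_u$ is degenerate yet has rank at least $n-2$, its rank is either $n-2$ or $n-1$; hence the image is a subspace of $\Symm(\overline V)$ all of whose non-zero elements have rank $n-2$ or $n-1$. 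Because Lemma \ref{estimate_for_dimension_of_kernel} gives $\dim\M\le\dim\M_u+n$, it suffices throughout to prove $\dim\M_u\le n$.

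For $q\neq 2$ I would simply invoke Theorem \ref{three_ranks} with $r=n-2$ and $\ell=n-1$. These are consecutive integers, hence of opposite parity, and $r<\ell<n$ since $n\ge 3$, so the theorem applies and yields $\dim\M\le 3n-r-2=2n$ together with $\dim\M_u\le 2n-2-r=n$ at once. Thus the corollary is immediate whenever $q\neq 2$, for every $n$.

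The delicate case is $q=2$, and here the parity of $n$ decides matters. If $n$ is odd, then $\dim\overline V=n-1$ is \emph{even}, the larger quotient rank $n-1$ equals $\dim\overline V$, and the smaller rank $n-2$ is odd. This is exactly the situation of Theorem \ref{revised_symmetric_dimension_bound}, applied in ambient dimension $n-1$ with odd rank $n-2$; that theorem holds for all $q$, including $q=2$, and gives $\dim\M_u\le 2(n-1)-(n-2)=n$, whence $\dim\M\le 2n$ with no restriction on $q$. If $n$ is even, however, $\dim\overline V=n-1$ is odd: the quotient ranks $n-2$ and $n-1$ still have opposite parity, so Theorem \ref{symmetric_dimension_bound} applies, but its conclusion is guaranteed only for $q\neq 2$, and the sharper Theorem \ref{revised_symmetric_dimension_bound} is unavailable because its hypotheses require an even ambient dimension and an odd smaller rank, both of which now fail. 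This is precisely why the corollary must leave open the case $n$ even together with $q=2$. I expect this recognition --- that for odd $n$ one must route the $q=2$ case through Theorem \ref{revised_symmetric_dimension_bound} rather than Theorem \ref{symmetric_dimension_bound}, exploiting the fact that the top rank $n-1$ exhausts the quotient dimension --- to be the main obstacle.

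Finally, the equality $\dim\M_u=n$ follows in the extremal situation $\dim\M=2n$: the bounds above always give $\dim\M_u\le n$ in every non-excluded case, while Lemma \ref{estimate_for_dimension_of_kernel} supplies the reverse inequality $\dim\M_u\ge\dim\M-n=n$. The two estimates force $\dim\M_u=n$ for every non-zero $u\in V$, as claimed.
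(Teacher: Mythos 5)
Your proposal is correct and follows essentially the same route as the paper: reduce to $\M_u$, view it inside $\Symm(V/U)$ where the only non-zero ranks are $n-2$ and $n-1$, bound $\dim\M_u$ by the two-rank theorems, and combine with Lemma \ref{estimate_for_dimension_of_kernel} for both the upper bound $\dim\M\leq\dim\M_u+n$ and the reverse inequality $\dim\M_u\geq n$ in the extremal case. The one substantive difference is that the paper's written proof simply cites Theorem \ref{three_ranks} with $r=n-2$, $\ell=n-1$, which only covers $q\neq 2$, and says nothing further; your explicit treatment of the case $q=2$ with $n$ odd --- routing through Theorem \ref{revised_symmetric_dimension_bound} in the even-dimensional quotient $\overline V$, where the top rank $n-1$ equals $\dim\overline V$ and the smaller rank $n-2$ is odd --- is exactly the missing step needed to justify the corollary's assertion that only the combination ``$n$ even and $q=2$'' is excluded. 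So your argument is, if anything, more complete than the one printed, and your diagnosis of why the excluded case cannot be handled by the same machinery matches the structure of the paper's results.
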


\begin{proof}

The dimension bound follows from Theorem \ref{three_ranks} when we take $r=n-2$ and $\ell=n-1$, provided $q\neq 2$. 

Suppose that $\dim \M=2n$. Then we know that $\dim \M_u\geq \dim \M-n=n$ by Lemma \ref{estimate_for_dimension_of_kernel}. However,
Theorem \ref{three_ranks} shows that 
when  $r=n-2$ and $\ell=n-1$, we also have $\dim \M_u\leq n$. Thus $\dim \M_u=n$. 
\end{proof}

The following result has some interest of its own and is also useful to investigate the structure of a subspace of $\Symm(V)$ in which
the rank of a non-zero element is at least $n-2$.

\begin{lemma} \label{one_to_one_correspondence}
 Suppose that $n$ is an even positive integer and let $\M$ be a subspace of $\Symm(V)$ of dimension $n+1$  in which each non-zero element has
 rank at least $n-1$. Then there is a one-to-one correspondence between one-dimensional subspaces of $\M$ spanned by elements of rank $n-1$ and 
 one-dimensional
subspaces of $V$, defined as follows: we associate a one-dimensional subspace of $\M$ spanned by an element of rank $n-1$ with its radical.
\end{lemma}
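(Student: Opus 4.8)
The plan is to identify this situation as the extremal case of Theorem \ref{equality_of_dimension} and then repackage the resulting numerical data as a bijection. First I would note that, since rank cannot exceed $n$, the hypothesis that every non-zero element of $\M$ has rank at least $n-1$ forces the non-zero ranks occurring in $\M$ to be exactly $n-1$ and $n$. As $n$ is even, these ranks have opposite parity, with $r=n-1$ odd; and $\dim \M = n+1 = 2n-(n-1) = 2n-r$. Thus $\M$ meets the hypotheses of Theorem \ref{equality_of_dimension} with this $r$, and we are in Case (a) ($r$ odd, $n$ even); I would emphasise that the conclusions there are derived from the dimension hypothesis and hold for all $q$, the troublesome value $q=2$ included. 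That theorem then supplies the two facts I need: $A_{n-1}=q^n-1$, and $\dim \M_u = n-r = 1$ for every non-zero $u\in V$.

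With these facts the analysis of the map is routine, and I would carry it out in three short steps. For well-definedness, a form $f$ of rank $n-1$ has a one-dimensional radical, and any non-zero scalar multiple of $f$ shares its rank and its radical; hence the assignment $\langle f\rangle \mapsto \rad f$ depends only on the one-dimensional subspace $\langle f\rangle$ and produces a one-dimensional subspace of $V$. For surjectivity, given a one-dimensional subspace $\langle u\rangle$ of $V$, the equality $\dim \M_u = 1$ yields a non-zero $f\in \M_u$; since $u\in\rad f$ the form $f$ is degenerate, so $\rank f = n-1$, whence $\rad f$ is one-dimensional and contains $u$, forcing $\rad f = \langle u\rangle$. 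For injectivity, if forms $f$ and $g$ of rank $n-1$ satisfy $\rad f = \rad g = \langle u\rangle$, then both lie in the one-dimensional space $\M_u$, so $\langle f\rangle = \M_u = \langle g\rangle$.

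As a sanity check, and as an alternative route to injectivity, I would observe that the $q^n-1$ forms of rank $n-1$ fall into $(q^n-1)/(q-1)$ one-dimensional subspaces, exactly the number of one-dimensional subspaces of $V$; a surjection between finite sets of equal cardinality is automatically a bijection.

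The real content has already been absorbed into Theorem \ref{equality_of_dimension}: what makes the correspondence one-to-one rather than merely surjective is the exact equality $\dim \M_u = 1$, as opposed to the inequality $\dim \M_u \geq 1$ that Lemma \ref{estimate_for_dimension_of_kernel} gives for free. Pinning down this exact value is the delicate point, and in a from-scratch proof it would require combining the two counting identities (Theorems \ref{common_zeros} and \ref{hermitian_zeros}), exploiting the opposite parity of $n-1$ and $n$, together with the inequalities for $|Z(\M)|$ and $|N(\M)|$ to dispose of the $q=2$ case; the bijection itself is then purely formal.
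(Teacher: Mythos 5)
Your proposal is correct and follows essentially the same route as the paper: the paper's proof likewise invokes Theorem \ref{equality_of_dimension} to get $\dim \M_u=1$ for every non-zero $u$ and then reads off the bijection exactly as in the closing argument of Corollary \ref{rank_n-1}. Your additional checks (well-definedness, the cardinality count $A_{n-1}=q^n-1$ matching $(q^n-1)/(q-1)$ lines, and the observation that Theorem \ref{equality_of_dimension} imposes no restriction on $q$) are accurate but only make explicit what the paper leaves implicit.
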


\begin{proof}
 Theorem \ref{equality_of_dimension} implies that $\dim \M_u=1$ for all non-zero $u$ in $V$. We then argue as we did
 at the conclusion of the proof of Corollary \ref{rank_n-1}. 
\end{proof}

We proceed to obtain more detailed information about the properties of a $2n$-dimensional subspace of $\Symm(V)$ when $n$ is odd and each non-zero
form has rank at least $n-2$.

\begin{theorem} \label{structure_of_2n_dimensional_subspace}
Suppose that $n\geq 3$ is an odd integer. Let $\M$ be a subspace of $\Symm(V)$ of dimension $2n$ in which each non-zero element has rank at least $n-2$.
Then we have
\[
 A_{n-1}=q^{n-1}(q^n-1), \quad A_{n-2}=\frac{(q^n-1)(q^{n-1}-1)}{q^2-1}.
\]
Furthermore, there is a one-to-one correspondence between one-dimensional subspaces of $\M$ spanned by elements of rank $n-2$ and two-dimensional
subspaces of $V$, defined by associating a one-dimensional subspace of $\M$ spanned by a form of rank $n-2$ with its radical.

Each subspace $\M_u$ is $n$-dimensional and if $v$ is linearly independent of $u$ in $V$, $\M_u\cap \M_v$ is one-dimensional and is the 
subspace of $\M$
corresponding to the two-dimensional subspace of $V$ spanned by $u$ and $v$. 
\end{theorem}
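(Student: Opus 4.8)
The plan is to first determine the internal structure of each subspace $\M_u$, then convert counts taken inside the $\M_u$ into the global numbers $A_{n-2}$ and $A_{n-1}$ by a double count, and finally read off the bijection by comparing cardinalities. Since $n$ is odd and every non-zero form has rank at least $n-2$, the only possible non-zero ranks are $n-2$, $n-1$, $n$, and Corollary \ref{rank_at_least_n-2} applies (the exceptional case $q=2$ is excluded because $n$ is odd), so $\dim \M_u = n$ for every non-zero $u\in V$. First I would analyze a single $\M_u$ by passing to the quotient $\overline{V}=V/\langle u\rangle$, which has even dimension $n-1$. Each $f\in\M_u$ has $u$ in its radical and so induces a well-defined $\overline{f}\in\Symm(\overline{V})$; the map $f\mapsto\overline{f}$ is injective, so its image $\overline{\M_u}$ has dimension $n$, and $\rank\overline{f}=\rank f$ because $\rad\overline{f}=\rad f/\langle u\rangle$. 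Thus $\overline{\M_u}$ is an $n=(n-1)+1$ dimensional subspace of $\Symm(\overline{V})$ whose non-zero elements have rank $n-2$ or $n-1$, i.e.\ rank at least $\dim\overline{V}-1$. This is exactly the setting of Lemma \ref{one_to_one_correspondence} (applied with the lemma's dimension equal to $n-1$), so the one-dimensional subspaces of $\overline{\M_u}$ spanned by rank $n-2$ elements correspond bijectively to the one-dimensional subspaces of $\overline{V}$. Counting the latter, $\M_u$ contains exactly $q^{n-1}-1$ forms of rank $n-2$, hence $(q^n-1)-(q^{n-1}-1)=q^{n-1}(q-1)$ forms of rank $n-1$.

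Next I would count the global numbers by double counting pairs $(f,u)$ with $u$ a non-zero vector in $\rad f$ (equivalently $f\in\M_u$). A rank $n-2$ form has a two-dimensional radical, so it is paired with $q^2-1$ vectors $u$; summing over the $q^n-1$ non-zero $u$, each contributing the $q^{n-1}-1$ rank $n-2$ elements of $\M_u$ found above, gives
\[
A_{n-2}(q^2-1)=(q^n-1)(q^{n-1}-1),
\]
which is the claimed value of $A_{n-2}$. Repeating with rank $n-1$ forms, whose radical is one-dimensional and hence paired with $q-1$ vectors, gives $A_{n-1}(q-1)=(q^n-1)q^{n-1}(q-1)$, so $A_{n-1}=q^{n-1}(q^n-1)$. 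These are consistent with Theorem \ref{common_zeros}, but that identity alone does not separate $A_{n-2}$ from $A_{n-1}$, which is why the local analysis of $\M_u$ is needed.

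For the correspondence, consider the map sending a one-dimensional subspace $\langle f\rangle$ with $\rank f=n-2$ to its radical $\rad f$, a two-dimensional subspace of $V$. The number of such one-dimensional subspaces is $A_{n-2}/(q-1)=\frac{(q^n-1)(q^{n-1}-1)}{(q^2-1)(q-1)}$, which is precisely the Gaussian binomial count $\binom{n}{2}_q$ of two-dimensional subspaces of $V$. To finish I would show the map is surjective: given a two-dimensional $U$, Lemma \ref{subspace_in_radical} with $r=2$ and $\epsilon=0$ gives $\dim\M_U\geq 1$, and any non-zero $f\in\M_U$ has $U\subseteq\rad f$ with $\dim\rad f\leq 2$, forcing $\rad f=U$ and $\rank f=n-2$. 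A surjection between finite sets of equal cardinality is a bijection; in particular $\dim\M_U=1$ for every two-dimensional $U$. The remaining assertions then follow at once: $\dim\M_u=n$ is Corollary \ref{rank_at_least_n-2}, and for independent $u,v$ we have $\M_u\cap\M_v=\M_{\langle u,v\rangle}$, the one-dimensional space corresponding to $\langle u,v\rangle$ under the bijection.

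I expect the main obstacle to be the reduction of the internal count of $\M_u$ to Lemma \ref{one_to_one_correspondence}, specifically verifying that passing to $\overline{V}=V/\langle u\rangle$ preserves both dimension and rank so that the lemma's hypotheses (even ambient dimension, subspace dimension one larger, rank at least the dimension minus one) are genuinely met. Once that reduction is justified, the double counting and the cardinality comparison with $\binom{n}{2}_q$ are routine.
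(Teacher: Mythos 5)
Your proof is correct and follows essentially the same route as the paper: both determine the rank distribution inside each $n$-dimensional $\M_u$ by passing to $\Symm(V/\langle u\rangle)$ (even ambient dimension $n-1$) and invoking the two-rank results there, then globalize by counting over one-dimensional subspaces of $V$, and finally obtain the bijection by combining surjectivity from Corollary \ref{two_dimensional_subspaces} with the equality of the count $A_{n-2}/(q-1)$ and the number of two-dimensional subspaces. The only cosmetic differences are that you cite Lemma \ref{one_to_one_correspondence} where the paper applies Theorem \ref{equality_of_dimension} directly, and you compute $A_{n-2}$ by a second direct double count over pairs $(f,u)$ with $u\in\rad f$, where the paper instead extracts it from the identity of Theorem \ref{common_zeros} after computing $A_{n-1}$.
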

 \begin{proof}
  
 Let $u$ be any non-zero element of $V$. Corollary \ref{rank_at_least_n-2} shows
that $\dim \M_u=n$. 
 We may identify $\M_u$ with a subspace of $\Symm(\overline{V})$, where 
$\dim \overline{V}=n-1$, and each non-zero
element of $\M_u$ has rank at least $n-2$. Since $n-1$ is even under our hypothesis, $\M_u$ contains exactly $q^n-q^{n-1}=q^{n-1}(q-1)$ elements
of rank $n-1$, by Theorem \ref{equality_of_dimension}.

Let $v$ be linearly independent of $u$ in $V$. It is clear that $\M_u\cap \M_v$ contains no elements of
rank $n-1$, for any such element would contain both $u$ and $v$ in its radical. It follows
that as there are $(q^n-1)/(q-1)$ one-dimensional subspaces in $V$, 
$\M$ contains exactly
\[
 q^{n-1}(q-1)\times \frac{q^n-1}{q-1}=q^{n-1}(q^n-1)
\]
elements of rank $n-1$, and this is thus the value of $A_{n-1}$. 

When we use the equality $A_{n-2}+A_{n-1}+A_n=q^{2n}-1$, and the fact that $\dim \M_u=n$, Theorem \ref{common_zeros} yields that
\[
 (q^n-1)^2=A_{n-1}(q-1)+A_{n-2}(q^2-1).
\]
As we have already obtained the value of $A_{n-1}$, the stated value for $A_{n-2}$ follows from this equation.

 Corollary \ref{two_dimensional_subspaces} shows
that any given two-dimensional subspace $W$ is the radical of an element of $\M$ of rank $n-2$.
Conversely, each element of $\M$ of rank $n-2$ determines a two-dimensional subspace of $V$, namely its radical.

Now, the number of one-dimensional subspaces of $\M$ spanned by elements of rank $n-2$ is $A_{n-2}/(q-1)$. Since
\[
 \frac{A_{n-2}}{q-1}=\frac{(q^n-1)(q^{n-1}-1)}{(q^2-1)(q-1)}
\]
and this latter number is well known to be the number of two-dimensional subspaces in $V$, we have a one-to-one correspondence between
one-dimensional subspaces of $\M$ spanned by elements of rank $n-2$ and two-dimensional subspaces of $V$.

Continuing with the assumption that $u$ and $v$ are linearly independent vectors, we note that any non-zero element of $\M_u\cap \M_v$ contains
the two-dimensional subspace spanned by $u$ and $v$ in its radical. As we have just proved that there is a unique
one-dimensional subspace spanned by an element of rank $n-2$ whose radical is any given two-dimensional subspace, $\dim \M_u\cap \M_v=1$.
 \end{proof}
 
 The following gives partial information on an analogue of Theorem \ref{structure_of_2n_dimensional_subspace} when $n$ is even.
 
 \begin{theorem} \label{structure_of_2n_dimensional_subspace_n_even}
Suppose that $n\geq 4$ is an even integer. Let $\M$ be a subspace of $\Symm(V)$ of dimension $2n$ 
in which each non-zero element has rank at least $n-2$. Then each two-dimensional subspace of $V$ occurs as the radical of some element
of rank $n-2$ in $\M$ and hence $A_{n-2}\geq (q^{n}-1)(q^{n-1}-1)/(q^2-1)$. We also have 
 $A_{n-1}\leq q^{n-1}(q^n-1)$ provided that $q\neq 2$.
 
 If these inequalities are strict, some linearly independent forms of rank $n-2$ share the same radical.
 
 \end{theorem}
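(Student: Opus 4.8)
The three assertions will be handled in turn, reusing the machinery already assembled for the odd case in Theorem \ref{structure_of_2n_dimensional_subspace}. For the first assertion I would simply invoke Corollary \ref{two_dimensional_subspaces}: since $\dim\M=2n$ and every non-zero element has rank at least $n-2$, each two-dimensional subspace $U$ of $V$ is the radical of some rank $n-2$ element of $\M$. Each rank $n-2$ form has a unique two-dimensional radical, and the $q-1$ non-zero scalar multiples of a fixed form all share that radical, so the assignment sending a one-dimensional subspace of $\M$ spanned by a rank $n-2$ form to its radical is a well-defined surjection onto the set of two-dimensional subspaces of $V$. Counting, the number $A_{n-2}/(q-1)$ of such lines is at least the number $(q^n-1)(q^{n-1}-1)/((q^2-1)(q-1))$ of two-dimensional subspaces, which rearranges to $A_{n-2}\geq (q^n-1)(q^{n-1}-1)/(q^2-1)$.

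For the bound on $A_{n-1}$ I would double count pairs $(f,u)$ with $f\in\M$ of rank $n-1$ and $u$ a non-zero vector of $\rad f$. Fixing $f$ gives $q-1$ choices of $u$, so the total is $A_{n-1}(q-1)$. Fixing $u$ instead, the forms counted are exactly the rank $n-1$ elements of $\M_u$; passing to the quotient $\overline{V}=V/\langle u\rangle$ of dimension $n-1$, $\M_u$ embeds in $\Symm(\overline{V})$ with every non-zero element of rank $n-2$ or $n-1$. Since $q\neq 2$, Corollary \ref{rank_at_least_n-2} gives $\dim\M_u=n=2(n-1)-(n-2)$, and because $n-1$ is odd while $n-2$ is even we are in Case (b) of Theorem \ref{equality_of_dimension}, with ambient dimension $n-1$ and lower rank $n-2$. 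The upper bound recorded there yields at most $q^{n}-q^{n-1}=q^{n-1}(q-1)$ forms of rank $n-1$ for each of the $q^n-1$ non-zero $u$. Equating the two counts gives $A_{n-1}(q-1)\leq (q^n-1)q^{n-1}(q-1)$, hence $A_{n-1}\leq q^{n-1}(q^n-1)$.

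For the last assertion I would first record the exact relation between $A_{n-2}$ and $A_{n-1}$. With $q\neq 2$ we have $d(u)=\dim\M_u=n$ for every non-zero $u$, so substituting into Theorem \ref{common_zeros} and using $A_{n-2}+A_{n-1}+A_n=q^{2n}-1$ collapses, just as in the odd case, to the single identity $(q^n-1)^2=(q^2-1)A_{n-2}+(q-1)A_{n-1}$. The right side is a fixed weighted sum, and one checks that equality in both displayed inequalities satisfies this identity exactly; hence raising $A_{n-2}$ above its stated value forces $A_{n-1}$ below its stated value and conversely, so the two inequalities are strict together. If they are strict, then $A_{n-2}/(q-1)$ strictly exceeds the number of two-dimensional subspaces of $V$, so the surjection of the first paragraph cannot be injective; two distinct lines of $\M$ spanned by rank $n-2$ forms then have a common radical, giving linearly independent rank $n-2$ forms sharing a radical.

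The main obstacle is the middle step: one must verify carefully that the quotient $\M_u$ satisfies the exact dimension equality $\dim\M_u=2(n-1)-(n-2)$ together with the correct parity, so that Theorem \ref{equality_of_dimension} applies in Case (b) rather than Case (a). In particular the exceptional behaviour at $q=2$, where $\dim\M_u$ need not equal $n$, is exactly what excludes $q=2$ from the $A_{n-1}$ bound and from the equivalence of the two strictness conditions.
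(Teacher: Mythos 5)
Your proposal is correct, and its first and third parts coincide with the paper's argument: Corollary \ref{two_dimensional_subspaces} gives the surjection from lines of rank $n-2$ forms onto two-dimensional subspaces of $V$ (hence the lower bound on $A_{n-2}$ and, by pigeonhole, the final assertion about shared radicals when the inequality is strict), and the identity $(q^n-1)^2=A_{n-1}(q-1)+A_{n-2}(q^2-1)$, which you derive in your last paragraph from $\dim\M_u=n$ and Theorem \ref{common_zeros}, is exactly how the paper links the two inequalities. Where you diverge is the middle step: the paper obtains $A_{n-1}\leq q^{n-1}(q^n-1)$ by feeding the $A_{n-2}$ lower bound into that identity, whereas you obtain it independently by double counting pairs $(f,u)$ with $u\in\rad f$ and bounding the number of rank $n-1$ elements in each $\M_u$ via the upper bound $A_n\leq q^{2n-r}-q^n$ of Theorem \ref{equality_of_dimension} applied to $\M_u\leq\Symm(V/\langle u\rangle)$. (Strictly speaking that upper bound is stated in the preamble of Theorem \ref{equality_of_dimension}, before the case split, so it is available whether or not one is in Case (b); your parity bookkeeping is nevertheless accurate and correctly explains why only an inequality, rather than the equality of the odd-$n$ theorem, is obtained here.) Your route makes the $A_{n-1}$ bound logically independent of the $A_{n-2}$ bound, at the cost of invoking the earlier theorem fibrewise; the paper's route is shorter once the identity is in hand. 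Both correctly isolate $q=2$ as the obstruction, since $\dim\M_u=n$ rests on Theorem \ref{three_ranks}, which may fail there.
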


 \begin{proof}
 
 Corollary \ref{two_dimensional_subspaces} shows
that each two-dimensional subspace is the radical of an element of $\M$ of rank $n-2$. It follows 
  that there are at least as many one-dimensional subspaces of $\M$ spanned by elements of rank $n-2$ as there are two-dimensional of subspaces
 of $V$. We deduce that
\[
 \frac{A_{n-2}}{q-1}\geq \frac{(q^n-1)(q^{n-1}-1)}{(q^2-1)(q-1)}
\]
and this gives us the stated inequality for $A_{n-2}$.

Now if we assume that $q>2$, 
Corollary \ref{rank_at_least_n-2} shows
that $\dim \M_u=n$ for all non-zero $u$. Then, as in the proof of Theorem \ref{structure_of_2n_dimensional_subspace}, we have
\[
 (q^n-1)^2=A_{n-1}(q-1)+A_{n-2}(q^2-1).
\]
This equality and the inequality obtained above for $A_{n-2}$ then implies that $A_{n-1}\leq q^{n-1}(q^n-1)$ .

The final part of the theorem is obvious from what has been stated.

\end{proof}
 
 We finish this section by providing an explicit example of a $2n$-dimensional subspace $\M$ of $\Symm(V)$ in which the rank of each non-zero
 element is at least $n-2$. We do not need to confine ourselves to finite fields to perform our constructions and hence we work in the wider context
 of fields possessing a cyclic Galois extension of degree $n$.
 
 Let $K$ be a field and let $L$ be a cyclic Galois extension of $K$ of finite degree $n$. 
Let $G$ denote the Galois group of $L$ over $K$ and let $\sigma$ be a generator of $G$. 
We will consider $L$ to be a vector space of dimension $n$ over $K$. 

Let $\Tr$ denote the trace form $L\to K$. 
Let $\lambda$ and $\mu$ be arbitrary elements of $L$. We define an element $f_{\lambda, \mu}$ of $\Symm(L)$ by setting
\[
 f_{\lambda, \mu}(x,y)=\Tr (\lambda xy+\mu x\sigma(y)+\mu \sigma(x) y)
\]
for all $x$ and $y$ in $L$. 

It is straightforward to check that the set of all such $f_{\lambda, \mu}$ is a $K$-subspace, $\M$, say, of $\Symm(L)$  of dimension $2n$.
The radical of $f_{\lambda, \mu}$ consists of those elements $x$ that satisfy
\[
 \sigma(\mu)\sigma^2(x)+\sigma(\lambda)\sigma(x)+\mu x=0.
\]
Properties of cyclic extensions imply that, provided $\lambda$ and $\mu$ are not both 0,
the subset of solutions of this equation is a $K$-subspace of dimension at most 2. Thus,
$f_{\lambda, \mu}$ has rank at least $n-2$ in the non-zero case, and $\M$ has the desired property. We note from this description of the 
radical, that each two-dimensional $K$-subspace of $L$ arises as the radical of a unique one-dimensional subspace of $\M$, which is in keeping
with Theorem \ref{structure_of_2n_dimensional_subspace} when $n$ is odd and $K$ finite.

Since $\mathbb{F}_q$ has the cyclic extension $\mathbb{F}_{q^n}$ of degree $n$, this construction certainly applies
to all finite  fields.

To construct subspaces of alternating bilinear forms with special rank properties, we continue with the hypothesis that
$L$ is a cyclic Galois extension of $K$, this time of even degree $n=2m$. 
Let $G$ denote the Galois group of $L$ over $K$ and let $\sigma$ be a generator of $G$. Let $L'$ be the $K$-subspace of $L$ consisting
of all elements $x$ that satisfy $\sigma^m(x)=-x$. $L'$ has dimension $m$ over $K$. 

Let $\lambda$ and $\mu$ be arbitrary elements of $L'$ and $L$, respectively. We define an element $f_{\lambda, \mu}$ of $\Alt(L)$ by setting
\[
 f_{\lambda, \mu}(x,y)=\Tr (\lambda \sigma^m(x)y+\mu x\sigma^{m-1}(y)-\mu \sigma^{m-1}(x) y)
\]
for all $x$ and $y$ in $L$. We note that, when $K$ has characteristic 2, the fact that $f_{\lambda, \mu}$ is alternating depends 
on the property of $L'$ that all its elements have trace 0 in these circumstances. 

We may check that the set of all such $f_{\lambda, \mu}$ is a $K$-subspace, $\M$, say, of $\Alt(L)$  of dimension $3m$. 
The radical of $f_{\lambda, \mu}$ consists of those elements $x$ that satisfy
\[
 \lambda\sigma^m(x)-\mu\sigma^{m-1}(x)+\sigma^{m+1}(\mu x)=0.
\]
When we apply $\sigma^{m+1}$ to this equation, we obtain
\[
 -\sigma^{m+1}(\mu)x-\sigma(\lambda)\sigma(x)+\sigma^2(\mu)\sigma^2(x)=0.
\]

Again, properties of cyclic extensions imply that, provided $\lambda$ and $\mu$ are not both 0,
the subset of solutions of this equation is a $K$-subspace of dimension at most 2. Thus,
$f_{\lambda, \mu}$ has rank at least $n-2$ in the non-zero case, and since this form is alternating, it has rank $n-2$ or $n$.
Thus, $\M$ is a subspace of $\Alt(V)$ of dimension $3m=3n/2$ in which each non-zero element has rank $n-2$ or $n$. 

This construction implies that the bound given in Theorem \ref{dimension_3m} is optimal. 

\section{On the existence of constant rank subspaces}

\noindent In the course of this paper, we have examined subspaces $\M$, say, of $\Symm(V)$ of dimension $n+1$, whose non-zero elements have rank at least $n-1$. 
Here, we provide more detail about the structure of such subspaces by bounding the dimension of a constant rank $n-1$ subspace contained in
$\M$. 

\begin{theorem} \label{bound_for_contained_constant_rank_subspace}
 Suppose that $\dim V=n$, where $n$ is even. Let $\M$ be an $(n+1)$-dimensional subspace of $\Symm(V)$ in which the rank of each non-zero 
 element is at least $n-1$.
Then, provided that $q\geq n$, the dimension of a  constant rank $n-1$ subspace of $\M$ is at most $n/2$.
\end{theorem}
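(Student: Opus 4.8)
The plan is to let $\K$ denote a constant rank $n-1$ subspace of $\M$, set $k=\dim\K$, and show $k\le n/2$. The engine is to prove that every radical vector of an element of $\K$ is a \emph{common} singular vector for all of $\K$, and then to play a lower bound on the number of such vectors (coming from the many distinct radicals) against an upper bound coming from a quadratic character sum.

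First I would extract an algebraic identity from the rank condition. For $f,g\in\K$ consider $\det(f+tg)$ as a polynomial in $t$: its constant term is $\det f=0$ and its $t^{n}$-coefficient is $\det g=0$, since every non-zero element of $\K$ has rank $n-1<n$. Hence this polynomial has degree at most $n-1$, yet it vanishes at all $q\ge n$ values $t\in\mathbb{F}_q$, so it is identically zero. Reading off the coefficient of $t$ (Jacobi's formula) gives $\tr(\operatorname{adj}(f)\,g)=0$ for all $f,g\in\K$. Because $n$ is even and $f$ has rank $n-1$, $\operatorname{adj}(f)$ is a non-zero rank-one symmetric matrix, so $\operatorname{adj}(f)=\alpha_f\,r_fr_f^{\top}$ with $\alpha_f\ne 0$ and $r_f$ spanning $\rad f$. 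Thus $g(r_f,r_f)=0$ for every $g\in\K$; that is, each radical vector lies in $V(\K)=\{v:g(v,v)=0\text{ for all }g\in\K\}$.

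Next I would count. By Lemma \ref{one_to_one_correspondence} (applicable since $\dim\M=n+1$, $n$ is even, and every non-zero element of $\M$ has rank at least $n-1$), distinct one-dimensional subspaces of $\K$ have distinct radicals. Hence the $\tfrac{q^{k}-1}{q-1}$ lines of $\K$ produce $q^{k}-1$ distinct non-zero radical vectors, all lying in $V(\K)$, so $|V(\K)|\ge q^{k}$. For the opposing bound I would use the orthogonality expansion $|V(\K)|=q^{-k}\sum_{g\in\K}\sum_{v\in V}\chi(g(v,v))$, where $\chi$ is a non-trivial additive character of $\mathbb{F}_q$. The $g=0$ term contributes $q^{n-k}$, while for each of the $q^{k}-1$ non-zero $g$ the inner sum is a quadratic Gauss sum for a form of rank $n-1$, of absolute value $q^{(n+1)/2}$. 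This yields $|V(\K)|\le q^{n-k}+q^{(n+1)/2}$. Combining the two bounds gives $q^{k}\le q^{n-k}+q^{(n+1)/2}$, and a direct check shows this inequality fails for every $q\ge 2$ as soon as $k\ge n/2+1$; therefore $k\le n/2$.

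I expect the main obstacle to be the character-sum step, which is where the parity of $n$ and the evenness of the rank-$(n-1)$ Gauss sum do the real work, mirroring the role of the Pfaffian and the Chevalley--Warning theorem in the alternating bound $\dim\le n/2$ for constant rank $n$ subspaces of $\Alt(V)$. The estimate $|G(g)|=q^{(n+1)/2}$ relies on $v\mapsto g(v,v)$ being a genuinely quadratic form, which holds in odd characteristic; in characteristic two this map is additive and the Weil bound collapses, so that case must be argued separately (consistent with the separate characteristic-two result announced in the introduction). The remaining points to nail down carefully are the rank-one structure of $\operatorname{adj}(f)$ together with the identification of its image with $\rad f$, and the bookkeeping that both the leading and constant coefficients of $\det(f+tg)$ vanish, so that $q\ge n$ evaluation points suffice to force the pencil determinant to vanish identically.
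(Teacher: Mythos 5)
Your argument is sound for odd $q$ and differs from the paper's proof in both of its main steps. Where the paper simply cites Theorem 1 of \cite{Gow2} (under the same hypothesis $q\geq n$) to conclude that $\rad g$ is totally isotropic for every form in the constant rank subspace, you prove this directly from the vanishing of the pencil determinant $\det(f+tg)$ and the rank-one symmetric adjugate, which makes that step self-contained. Where the paper invokes the exact count $q^{n-d}-1$ of non-zero common isotropic points of a $d$-dimensional subspace of odd-rank symmetric forms (Theorem 5 of \cite{Du} for odd $q$, and the remark after Theorem 3 of \cite{Gow3} for even $q$), you substitute the Gauss-sum upper bound $|V(\K)|\leq q^{n-k}+q^{(n+1)/2}$. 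The lower bound $|V(\K)|\geq q^{k}$ coming from the distinctness of the radicals (a consequence of $\dim\M_u=1$, i.e.\ of Theorem \ref{equality_of_dimension}) is common to both treatments, and your closing verification that $q^{k}\leq q^{n-k}+q^{(n+1)/2}$ forces $k\leq n/2$ is correct.

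The genuine gap is the case of even $q$, which you flag but do not close, and which is inside the scope of the statement: the only hypothesis is $q\geq n$, so $q$ may be a power of $2$ (the separate characteristic-two theorem later in the paper concerns odd $n$ and constant rank $n-2$ subspaces, not this result, so it does not excuse the omission). Your own framework closes the gap cheaply: in characteristic two the map $v\mapsto g(v,v)$ is additive, so $V(\K)$ is a subspace of $V$; since every non-zero element of $\K$ has odd rank $n-1$, none is alternating, so $\K_{\Alt}=0$ and, by the identity $\dim V(\K)=n-\dim\K$ recorded in Section 4 for perfect fields of characteristic two, $|V(\K)|=q^{n-k}$. Combined with $q^{k}\leq|V(\K)|$ this gives $k\leq n/2$ immediately, indeed more sharply than the Gauss-sum route. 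Without some such supplement your proposal proves only the odd-characteristic half of the theorem.
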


\begin{proof}
 We have $\dim \M_u=1$, by 
 Theorem \ref{equality_of_dimension} for any non-zero element $u$ of $V$.
 Let $\N$ be a non-zero constant rank $n-1$ subspace of $\M$ of dimension $d$. Let $g$ be any non-zero element of $\N$. Since we are assuming that
 $q\geq n$, it follows from Theorem 1 of \cite{Gow2} that  $\rad g$ is totally isotropic for all the forms in $\N$. 
 Thus if $v$ spans  $\rad g$, we have
 $f(v,v)=0$ for all $f\in \N$. We say that $v$ is a non-zero \emph{common isotropic point} for the elements of $\N$.
 
Clearly, our theorem is true if $d=1$. Thus, we may assume that $d\geq 2$. Let $h$ be an element of $\N$ linearly independent of $g$. Each of
$\rad g$ and $\rad h$ is one-dimensional, and we claim that the  radicals are different. For, if $\rad g=\rad h$, and 
the vector $u$ spans this one-dimensional subspace, then both $g$ and $h$ are contained in the subspace $\M_u$. 
It follows that $\dim \M_u\geq 2$, and this contradicts our opening
observation. Thus, the radicals are different.

If we now count the non-zero common isotropic points for $\N$, we find that there are at least $q^d-1$ of them, since each one-dimensional
subspace of $\N$ contributes $q-1$ non-zero common isotropic points and we have just shown that different one-dimensional 
subspaces contribute subsets of
$q-1$ common isotropic points that have empty intersection.

Finally, since the elements of $\N$ have odd rank, the total number of non-zero common isotropic points for the elements of $\N$ is
$q^{n-d}-1$. This follows from Theorem 5 of \cite{Du} if $q$ is odd and from the Remark following
Theorem 3 of \cite{Gow3} when $q$ is even. We deduce that $q^d-1\leq q^{n-d}-1$ and hence $2d\leq n$, as required.

\end{proof}

We can improve this bound in the special case that $\dim V=4$.

\begin{theorem} \label{case_n=4}
Suppose that $\dim V=4$. Let $\M$ be a five-dimensional subspace of $\Symm(V)$ in which the rank of each non-zero element is at least $3$.
Then, provided that $q\geq 4$, $\M$ contains no two-dimensional constant rank $3$ subspace.
\end{theorem}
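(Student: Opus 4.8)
The plan is to identify $\N$ with a pencil of symmetric matrices and force a contradiction out of the classical adjoint (adjugate) of that pencil. Suppose, for a contradiction, that $\N$ is a two-dimensional constant rank $3$ subspace of $\M$. Fixing a basis of $V$ and using the identification of $\Symm(V)$ with symmetric matrices, I would write $\N=\langle A,B\rangle$, where $A$ and $B$ are $4\times 4$ symmetric matrices over $\mathbb{F}_q$, so that the non-zero elements of $\N$ are the matrices $M(\mu,\lambda)=\mu A+\lambda B$ with $(\mu,\lambda)\neq(0,0)$, each of rank exactly $3$.

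First I would record the structural input supplied by the ambient space $\M$. Since $\dim\M=5=2n-r$ with $r=n-1=3$ odd and $n=4$ even, Theorem \ref{equality_of_dimension} (Case (a)) applies and gives $\dim\M_u=1$ for every non-zero $u\in V$. It follows that distinct one-dimensional subspaces of $\N$ have distinct radicals: if two linearly independent members of $\N$ shared a radical $\langle u\rangle$, both would lie in $\M_u$, forcing $\dim\M_u\geq 2$. Next I would use the hypothesis $q\geq 4$ to pass from pointwise to generic rank. The determinant $\det M(\mu,\lambda)$ is a binary form of degree $4$ vanishing at each of the $q+1$ points of $\mathbb{P}^1(\mathbb{F}_q)$; since $q+1\geq 5>4$, it vanishes identically, and as the rank is $3$ at every point the generic rank of the pencil is exactly $3$.

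Consequently the adjugate $\mathrm{adj}\,M(\mu,\lambda)$ is a symmetric matrix of homogeneous polynomials of generic rank $1$, whose column space is the kernel line of $M(\mu,\lambda)$. Writing this kernel as the span of a primitive vector $v(\mu,\lambda)$ of homogeneous polynomials of degree $\delta$, the rank-one symmetry forces a factorisation $\mathrm{adj}\,M(\mu,\lambda)=c(\mu,\lambda)\,v(\mu,\lambda)v(\mu,\lambda)^T$, where $c$ is homogeneous of degree $3-2\delta$; hence $\delta\in\{0,1\}$. The two cases are then dispatched. If $\delta=0$ then $v$ is a constant vector and every $M(\mu,\lambda)$ has the same radical $\langle v\rangle$, contradicting the distinctness of radicals. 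If $\delta=1$ then $c$ is a non-zero linear form, which necessarily has a root $(\mu_0,\lambda_0)\in\mathbb{P}^1(\mathbb{F}_q)$; there $\mathrm{adj}\,M(\mu_0,\lambda_0)=0$, so $\rank(\mu_0A+\lambda_0B)\leq 2$, contradicting constant rank $3$.

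The crux, and the point I expect to require the most care, is the clean separation of these two cases: one is excluded by the geometry of the ambient $\M$ (distinctness of radicals, via $\dim\M_u=1$), the other by the arithmetic of $\mathbb{F}_q$ (a non-zero linear form always has a rational root). The decisive technical step is the passage to an identically vanishing determinant, which is legitimate only once $q+1>4$; this is precisely what makes the adjugate factorisation available and is, I believe, the true source of the hypothesis $q\geq 4$. I would therefore concentrate on justifying carefully that $\mathrm{adj}\,M$ really does factor as $c\,vv^T$ over the polynomial ring (rank-one symmetric matrices are scalar multiples of $vv^T$ in every characteristic, and primitivity of $v$ forces $c$ to be polynomial by Gauss's lemma), since the degree bookkeeping giving $\delta\le 1$ rests on this.
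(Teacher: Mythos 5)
Your proof is correct, but it reaches the contradiction by a genuinely different route from the paper for one of the two ingredients. The paper's argument is two lines: (i) linearly independent elements of $\N$ have distinct radicals, because $\dim\M_u=1$ for all non-zero $u$ (Theorem \ref{equality_of_dimension}, Case (a), exactly as in the proof of Theorem \ref{bound_for_contained_constant_rank_subspace}); and (ii) Lemma 4 of \cite{Du}, which asserts that for $q\geq 4$ the non-zero elements of a two-dimensional constant rank $3$ subspace of $\Symm(V)$ all share the same radical. You use (i) verbatim, but replace the citation in (ii) by a self-contained Kronecker-pencil argument: $\det(\mu A+\lambda B)$ is a binary quartic vanishing at all $q+1>4$ rational points of $\mathbb{P}^1$, hence identically zero, and the adjugate factors as $c\,vv^T$ with $v$ primitive homogeneous of degree $\delta\in\{0,1\}$; the case $\delta=0$ contradicts (i) and the case $\delta=1$ contradicts constant rank because the linear form $c$ has a rational zero. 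This is sound (the rank-one symmetric factorisation and the Gauss's-lemma step for $c$ work in every characteristic), and it has the merit of making the hypothesis $q\geq 4$ completely transparent and of essentially re-proving the relevant special case of Lemma 4 of \cite{Du}, whereas the paper's version is shorter but opaque about where $q\geq 4$ is used. One small point to tighten in the case $\delta=0$: a priori $c$ is a cubic form which could vanish at a rational point, where the adjugate would be zero and the conclusion ``radical equals $\langle v\rangle$'' would not follow directly; but since every non-zero element of $\N$ has rank exactly $3$, the adjugate is non-zero at every rational point, so $c$ has no rational zeros and the radical is indeed $\langle v\rangle$ everywhere (alternatively, a rational zero of $c$ already forces rank at most $2$, which is itself a contradiction). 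With that one sentence added, your argument is complete.
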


\begin{proof}
 Suppose by way of contradiction that $\M$ contains a two-dimensional constant rank 3 subspace, $\N$, say. Let $f$ and $g$ be linearly independent
elements in $\N$. Then we know from the proof of Theorem \ref{bound_for_contained_constant_rank_subspace} that $f$ and $g$ have different 
radicals.
However, it also follows from Lemma 4 of \cite{Du} that, as $q\geq 4$, $f$ and $g$ have the same radical. 
This contradiction shows that no such constant rank subspace of $\M$ exists.
\end{proof}

Theorem \ref{bound_for_contained_constant_rank_subspace} is unlikely to be optimal,
but there do exist constant rank $n-1$ subspaces of reasonably large dimension
compared with $n$ in the circumstances described. For example, we have shown in the proof of Theorem 5 of
\cite{Gow2} that if $n$ is even and $3$ divides $n+1$, there is 
 an $(n+1)$-dimensional subspace $\M$ of $\Symm(V)$ having the two properties: the rank of each non-zero 
 element of $\M$ is at least $n-1$; $\M$ contains a constant rank $n-1$ subspace of dimension $(n+1)/3$.  
 
 Next, we obtain further information about subspaces $\M$ satisfying the hypotheses of Theorem \ref{structure_of_2n_dimensional_subspace}
 when we are working over sufficiently large finite fields of characteristic 2.

\begin{theorem} \label{characteristic_two_bound_for_contained_constant_rank_subspace}
 Suppose that $\dim V=n$, where $n\geq 3$ is odd, and that $q$ is a power of $2$. 
 Let $\M$ be an $2n$-dimensional subspace of $\Symm(V)$ in which the rank of each non-zero 
 element is at least $n-2$. Then $\M_{\Alt}$ is an $n$-dimensional constant rank $n-1$ subspace.
 
Furthermore, provided that $q\geq n-1$, the dimension of a  constant rank $n-2$ subspace of $\M$ is at most $(2n-3)/3$.
\end{theorem}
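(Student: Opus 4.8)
The plan is to treat the two assertions separately: the first via the $Z(\M)$ machinery together with the characteristic-two theory of $\M_{\Alt}$, and the second by a subspace-counting argument inside the space $V(\N)$ of common isotropic points.

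For the first assertion, I would begin by noting that, since $\dim V=n$ is odd and every non-zero element of $\M$ has rank $n-2$, $n-1$ or $n$, the only even value available is $n-1$. In characteristic two an alternating form has even rank, so every non-zero element of $\M_{\Alt}$ has rank exactly $n-1$; thus $\M_{\Alt}$ is automatically a constant rank $n-1$ subspace and only its dimension remains to be found. Here I would invoke Corollary \ref{rank_at_least_n-2}: because $n$ is odd, the excluded case ($n$ even and $q=2$) never occurs, so $\dim \M_u=n$ for every non-zero $u$. Since the forms are symmetric, $\M^L_u=\M^R_u=\M_u$ and $n=\dim\M-n$, so Corollary \ref{condition_for_only_trivial_elements} shows that $Z(\M)$ consists only of trivial elements. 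As $\mathbb{F}_q$ is perfect of characteristic two, Theorem \ref{size_of_alternating_subspace} then gives $\dim \M_{\Alt}=\dim\M-\dim V=2n-n=n$, completing the first part.

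For the second assertion, let $\N$ be a constant rank $n-2$ subspace of $\M$, of dimension $d$. Since $n-2$ is odd while alternating forms have even rank, $\N$ contains no non-zero alternating element, so $\N_{\Alt}=0$; the characteristic-two formula recorded just before Theorem \ref{size_of_alternating_subspace} then shows that $V(\N)$ is a subspace of dimension $n-d$. The key geometric input is that, because $q\geq n-1=(n-2)+1$, Theorem 1 of \cite{Gow2} forces the (two-dimensional) radical of each non-zero $g\in\N$ to be totally isotropic for all forms in $\N$; hence every such radical is a two-dimensional subspace of $V(\N)$. Moreover, by the one-to-one correspondence in Theorem \ref{structure_of_2n_dimensional_subspace} (valid since $n$ is odd), distinct one-dimensional subspaces of $\N$, all spanned by rank $n-2$ forms, have distinct radicals.

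It then remains to count. There are $(q^d-1)/(q-1)$ one-dimensional subspaces of $\N$, giving that many distinct two-dimensional subspaces of $V(\N)$, whereas the total number of two-dimensional subspaces of the $(n-d)$-dimensional space $V(\N)$ is the Gaussian binomial $\frac{(q^{n-d}-1)(q^{n-d-1}-1)}{(q^2-1)(q-1)}$. Comparing these yields
\[
(q^d-1)(q^2-1)\leq (q^{n-d}-1)(q^{n-d-1}-1),
\]
and a routine comparison of dominant powers of $q$ shows this can hold only when $d+2\leq 2(n-d)-1$, that is $d\leq(2n-3)/3$ (the case $d\leq 1$ being trivial for $n\geq 3$). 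I expect the main obstacle to be conceptual rather than computational: one must resist the tempting but false point-counting argument, since a vector in $\rad g\cap\rad h$ lies in the radicals only of the pencil $\langle g,h\rangle$, not of all of $\N$, so the two-dimensional radicals need not meet in a common subspace. Counting the radicals instead as \emph{subspaces} of $V(\N)$ is what produces the correct bound, and checking that the Gaussian-binomial inequality collapses cleanly to $d\leq(2n-3)/3$ for all $q\geq n-1$ is the one step requiring a little care.
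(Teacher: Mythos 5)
Your proposal is correct and follows essentially the same route as the paper: triviality of $Z(\M)$ via $\dim\M_u=n$ and Theorem \ref{size_of_alternating_subspace} to get $\dim\M_{\Alt}=n$ (with the parity of $n$ forcing constant rank $n-1$), and then, for a constant rank $n-2$ subspace $\N$, comparing the $(q^d-1)/(q-1)$ distinct two-dimensional radicals (distinct by Theorem \ref{structure_of_2n_dimensional_subspace}, and contained in the $(n-d)$-dimensional space $V(\N)$) against the Gaussian binomial count of two-dimensional subspaces. The only divergence is cosmetic: you justify $\dim V(\N)=n-d$ by observing $\N_{\Alt}=0$ where the paper cites \cite{Gow3} directly, and your final "comparison of dominant powers" is exactly the small case analysis ($m=2$, then $q\geq 4$ versus $q=2$) that the paper writes out in full.
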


\begin{proof}
 
 Theorem \ref{structure_of_2n_dimensional_subspace} shows that $Z(\M)$ consists only of trivial elements. It follows from
 Theorem \ref{size_of_alternating_subspace} that
  $\dim \M_{\Alt}
=n$. In addition, since
 the rank of an alternating bilinear form is even, each non-zero element of $\M_{\Alt}$ has rank $n-1$. Thus $\M_{\Alt}$ is a constant rank
 $n-1$ subspace of dimension $n$. 
 
 Let $\N$ be a constant rank $n-2$ subspace of $\M$ of dimension $d$. We set 
 \[
 V(\N)=\{ v\in V: f(v,v)=0 \mbox{ for all } f\in \N\}.
\]
As proved in \cite{Gow3}, $V(\N)$ is a subspace of dimension $n-\dim \N=n-d$. Furthermore, given the hypothesis that $q\geq n-1$, we showed
in Section 2 of \cite{Gow3} that $\rad f$ is contained in $V(\N)$ for any non-zero element $f$ of $\N$.

Now Theorem \ref{structure_of_2n_dimensional_subspace} shows that if $f$ and $g$ are linearly independent elements of $\N$, $\rad f$ and $\rad g$
are different two-dimensional subspaces of $V$. Consequently, the radicals of the one-dimensional subspaces
of $\N$ constitute $(q^d-1)/(q-1)$ different subspaces. We have also noted that these are all two-dimensional subspaces of the space
$V(\N)$ of dimension $n-d$. We set $m=n-d$. Since the number of two-dimensional subspaces of a space of dimension $m$ is
\[
 \frac{(q^{m}-1)(q^{m-1}-1)}{(q-1)(q^2-1)},
\]
we deduce that
\[
 \frac{q^d-1}{q-1}\leq \frac{(q^{m}-1)(q^{m-1}-1)}{(q-1)(q^2-1)}.
\]
Thus
\[
 (q^2-1)(q^d-1)\leq (q^{m}-1)(q^{m-1}-1).
\]

We clearly must have $m\geq 2$. Suppose first that $m=2$. Then since $V(\N)$ contains $\rad f$ for each non-zero element $f$
of $\N$, we must have $\rad f=V(\N)$. But we also know that linearly independent elements of $\N$ have different radicals. This means that
$\N$ must be one-dimensional, and hence $d=1$. Since $1\leq (2n-3)/3$ when $n\geq 3$, our bound holds in this case.

We can now assume that $m\geq 3$. It is easy to establish the estimate
\[
 \frac{q^{m}-1}{q^{2}-1}\leq 2q^{m-2},
\]
since $q\geq 2$. We then obtain that
\[
 q^d-1\leq 2q^{m-2}(q^{m-1}-1)<2q^{2m-3}.
\]
This clearly implies that $d\leq 2m-3$ if $q\geq 4$. 

If $q=2$, our inequality is
\[
 3(2^d-1)\leq (2^{m}-1)(2^{m-1}-1)<2^{2m-1}.
\]
This yields
\[
 2^{d+1}+2^d-3<2^{2m-1}
\]
and this leads again to the conclusion that $d\leq 2m-3$.

Finally, since $m=n-d$, this inequality for $d$ is $d\leq 2n-2d-3$ and thus $d\leq (2n-3)/3$.

\end{proof}

We can also improve the final dimension estimate for a constant rank $n-2$ subspace of $\M$ when $n=5$, and in this case
we do not need to restrict to characteristic 2.

\begin{theorem} \label{case_n=5}
Suppose that $\dim V=5$. Let $\M$ be a ten-dimensional subspace of $\Symm(V)$ in which the rank of each non-zero element is at least $3$.
Then, provided that $q\geq 4$, $\M$ contains no two-dimensional constant rank $3$ subspace.
\end{theorem}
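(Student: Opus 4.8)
The plan is to argue by contradiction, mirroring the proof of Theorem \ref{case_n=4} but drawing the "distinct radicals" conclusion from the structure theorem appropriate to the odd-dimensional setting rather than from Theorem \ref{bound_for_contained_constant_rank_subspace}. Suppose $\M$ contains a two-dimensional constant rank $3$ subspace $\N$, and choose linearly independent elements $f, g \in \N$; by hypothesis both have rank $3 = n-2$.

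First I would show that $f$ and $g$ have different radicals. Since $\dim V = 5$ is odd and $\dim \M = 10 = 2n$ with every non-zero element of $\M$ having rank at least $n-2 = 3$, Theorem \ref{structure_of_2n_dimensional_subspace} applies. It supplies a one-to-one correspondence between one-dimensional subspaces of $\M$ spanned by elements of rank $3$ and two-dimensional subspaces of $V$, the correspondence sending each such line to the (two-dimensional) radical of its spanning form. As $f$ and $g$ are linearly independent, they span distinct one-dimensional subspaces, so under this bijection $\rad f$ and $\rad g$ are distinct two-dimensional subspaces of $V$.

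Next I would invoke Lemma 4 of \cite{Du}. Since $\N$ is a two-dimensional constant rank subspace of $\Symm(V)$ and $q \geq 4$, that lemma forces all non-zero elements of $\N$, in particular $f$ and $g$, to share a common radical. This contradicts the conclusion of the previous paragraph, so no such subspace $\N$ can exist.

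The argument is short, and the only point requiring care is checking that the hypotheses of Theorem \ref{structure_of_2n_dimensional_subspace} and of Lemma 4 of \cite{Du} hold simultaneously: the data $n = 5$ odd with $\dim \M = 2n$ triggers the structure theorem, while $q \geq 4$ triggers the lemma, and both are granted here. The essential content is the clash between these two results. One forces \emph{distinct} radicals for linearly independent rank-$3$ forms across the whole of $\M$, the other forces a \emph{common} radical inside any two-dimensional constant rank subspace; the existence of such a subspace is therefore untenable. The only genuine obstacle is recognizing which structure theorem to deploy for the distinctness step, since here the radicals of the rank-$3$ forms are two-dimensional rather than one-dimensional as in the $n=4$ case, so the bijection must come from Theorem \ref{structure_of_2n_dimensional_subspace} and not from the single-radical observation used in Theorem \ref{bound_for_contained_constant_rank_subspace}.
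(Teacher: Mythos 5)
Your proposal is correct and follows essentially the same route as the paper's own proof: both derive the contradiction by pairing the distinct-radicals conclusion from the bijection in Theorem \ref{structure_of_2n_dimensional_subspace} against the common-radical conclusion from Lemma 4 of \cite{Du} for a two-dimensional constant rank $3$ subspace when $q\geq 4$. Nothing further is needed.
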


\begin{proof}
 Let $f$ and $g$ be linearly independent elements of rank 3 in $\M$. 
 Theorem \ref{structure_of_2n_dimensional_subspace} implies
 that $\rad f$ and $\rad g$ have different radicals. On the other hand, if all non-trivial
linear combinations of $f$ and $g$ have rank 3, it also follows from Lemma 4 of \cite{Du} that, as $q\geq 4$, $f$ and $g$ have the same radical. 
Thus no two-dimensional constant rank 3 subspace of $\M$ exists.
\end{proof}


Let $K$ be an arbitrary field and let $L$ be a separable extension field of $K$ of finite degree $n+1$, where $n\geq 2$. Let  $\Tr:L\to K$ 
denote the trace form. Let $\N$ be the $K$-subspace of all elements
 $f_z\in \Symm(L)$ defined by 
\[
 f_z(x,y)=\Tr(z(xy))
\]
for all $x$, $y$ and $z$ in $L$. Each non-zero element of $\N$ has rank $n+1$ and $\dim \N=n+1$. 

Let $W$ be  the $K$-subspace of dimension $n$ in $L$, consisting of all elements of trace zero. Let $\mathcal{M}$ 
be the $K$-subspace of $\Symm(W)$ obtained by restricting each element of $\N$ to $W\times W$. Then, as we saw in the proof
of Theorem \ref{bounds_attained}, each non-zero element of $\M$ has rank at least $n-1$ and $\dim \M=n+1$. 

We wish to show that $\M$ has an interesting property with respect to containing constant rank $n-1$ subspaces, provided that
$|K|$ is sufficiently large and $L$ has no proper subfields containing $K$. This last condition means that $n+1$ is a prime when $K$ is finite.
The following auxiliary field-theoretic lemma is taken from \cite{Matt} and is included to keep this paper reasonably self-contained.

\begin{lemma} \label{no_two_dimensional_subspaces}
Let $K$ be a field and let $L$ be an extension field of $K$ of degree $d>2$. Suppose that there are no proper subfields of $L$ containing $K$.
Let $A$ be any $K$-subspace of $L$ of dimension $d-1$. Then, provided that $|K|\geq d-1$, there does not exist a two-dimensional $K$-subspace $U$
of $L$ such that $U^{\times}\leq A^{-1}$. Here, $U^\times$ denotes the set of non-zero elements of $U$ and $A^{-1}$ is the set of inverse elements
of the non-zero elements of $A$.
\end{lemma}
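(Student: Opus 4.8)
The plan is to argue by contradiction: assuming a two-dimensional $K$-subspace $U$ with $U^{\times}\subseteq A^{-1}$ exists, I will produce $d$ linearly independent elements lying inside a $(d-1)$-dimensional space. First I would fix a $K$-basis $u,w$ of $U$ and rewrite the hypothesis $U^{\times}\le A^{-1}$ in the equivalent form $x^{-1}\in A$ for every nonzero $x\in U$. The generator trick is to set $\theta=u^{-1}w$. Since $u$ and $w$ are $K$-linearly independent, $\theta\notin K$, so $K(\theta)$ is a subfield of $L$ strictly containing $K$; by the assumption that $L$ has no proper intermediate subfields, $K(\theta)=L$ and hence $\theta$ has degree $d$ over $K$. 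In particular $1,\theta,\dots,\theta^{d-1}$ is a $K$-basis of $L$, so no nonzero polynomial in $\theta$ of degree below $d$ can vanish.

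Next I would transfer the inverse condition onto the subspace $A'=uA$, which is again a $K$-subspace of dimension $d-1$ because multiplication by the nonzero element $u$ is a $K$-linear automorphism of $L$. A general nonzero element of $U$ has the shape $u(\alpha+\beta\theta)$ with $(\alpha,\beta)\neq(0,0)$, and its inverse $u^{-1}(\alpha+\beta\theta)^{-1}$ lies in $A$; equivalently $(\alpha+\beta\theta)^{-1}\in A'$. Taking $\beta=0$ yields $1\in A'$, while for $\beta\neq 0$, using that $A'$ is a $K$-subspace, one obtains $(\theta+\lambda)^{-1}\in A'$ for every $\lambda\in K$ (each such inverse exists because $\theta\notin K$ forces $\theta+\lambda\neq 0$).

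The crux is a linear-independence argument. Choosing $d-1$ distinct scalars $\lambda_1,\dots,\lambda_{d-1}\in K$ — possible precisely because $|K|\ge d-1$ — I claim the $d$ elements $1,(\theta+\lambda_1)^{-1},\dots,(\theta+\lambda_{d-1})^{-1}$ are $K$-linearly independent. To see this I would assume a $K$-linear relation, clear denominators by multiplying through by $\prod_{j}(\theta+\lambda_j)$, and view the result as a polynomial in $\theta$ of degree at most $d-1$; since $\theta$ has degree $d$, this polynomial must be identically zero in $K[X]$. Comparing the leading coefficient kills the coefficient of $1$, and the standard partial-fraction step of evaluating the remainder at $X=-\lambda_i$ forces each of the remaining coefficients to vanish. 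This exhibits $d$ independent vectors inside $A'$, contradicting $\dim A'=d-1$, which finishes the proof.

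I expect the only genuinely delicate point to be the accounting in the independence step: the hypothesis $|K|\ge d-1$ supplies only $d-1$ usable scalars $\lambda$, so the element $1\in A'$ arising from the $\beta=0$ member of $U$ is indispensable for reaching the count $d$, and it cannot be discarded. The reduction $K(\theta)=L$ is exactly where the no-intermediate-subfield hypothesis enters, and the passage from $A$ to $A'=uA$ is the device that lets the inverse condition be read off as membership statements in a fixed codimension-one subspace.
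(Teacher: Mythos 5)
Your proposal is correct and follows essentially the same route as the paper: normalize by a unit so the inverse condition becomes $(\theta+\lambda)^{-1}\in A'$ for a non-scalar $\theta$, use $|K|\geq d-1$ to produce the $d$ elements $1,(\theta+\lambda_1)^{-1},\dots,(\theta+\lambda_{d-1})^{-1}$, and run the clear-denominators/partial-fractions argument. The only difference is the order of the contradiction (you use the no-intermediate-subfield hypothesis first to get degree $d$ and hence linear independence against $\dim A'=d-1$, whereas the paper deduces dependence from the dimension and then contradicts the subfield hypothesis), which is a trivial rearrangement.
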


\begin{proof}
 Suppose by way of contradiction that there is a two-dimensional $K$-subspace $U$
of $L$ such that $U^\times\leq A^{-1}$. Let $z$ be a non-zero element of $U$. Then $z^{-1}U$ is a two-dimensional subspace containing $1$, and
$(z^{-1}U)^\times\leq z^{-1}A^{-1}=(zA)^{-1}$. If we replace $U$ by $z^{-1}U$ and $A$ by $zA$, it suffices to show the non-existence of $U$
when $1$ belongs to $U$.

Let $\alpha$ be an element of $U\setminus K$. Let $k_1$, \dots, $k_{d-1}$ be $d-1$ different elements of $K$, whose existence is guaranteed by
the hypothesis that $|K|\geq d-1$. Then $1$, $k_1+\alpha$, \dots, $k_{d-1}+\alpha$ are $d$ non-zero elements in $U$. Thus, since we are assuming that $U^\times\leq A^{-1}$, 
\[
 1, (k_1+\alpha)^{-1}, \ldots, (k_{d-1}+\alpha)^{-1}
\]
are $d$ elements in $A$, and are hence linearly dependent. Hence, there exist elements $\beta_0$, $\beta_1$, \dots, $\beta_{d-1}$, not all zero, with
\[
 \beta_0+\sum_{i=1}^{d-1} \beta_i (k_i+\alpha)^{-1}=0.
\]

Next, we introduce the polynomial $\Phi$ of degree $d-1$ in $K[x]$ defined by
\[
 \Phi=\prod_{i=1}^{d-1}(x+k_i)
\]
and further define polynomials $\Phi_i$, for $1\leq i\leq d-1$, by
\[
 \Phi_i=\frac{\Phi}{x+k_i}.
\]
We also set $\Phi_i(-k_i)=\lambda_i\neq 0$. We note that $\Phi(\alpha)\neq 0$. 

Let $F\in K[x]$ be defined by
\[
 F=\beta \Phi+\sum_{i=1}^{d-1} \beta_i \Phi_i.
\]
We claim that $F$ is not the zero polynomial. For, since $\Phi_i(-k_j)=0$ if $j\neq i$, we have
\[
 F(-k_j)=\beta_j\lambda_j.
\]
Thus if $F$ is the zero polynomial, $\beta_j=0$ for $1\leq j\leq d-1$, and hence $\beta_0=$ also, since
\[
 \beta_0+\sum_{i=1}^{d-1} \beta_i (k_i+\alpha)^{-1}=0.
\]
This is not the case. Hence $F$ is a non-zero polynomial of degree at most $d-1$, and we can check that $F(\alpha)=0$. 
It follows that the subfield $K(\alpha)$ of $L$ has degree at most $d-1$ over $K$, and since $K(\alpha)\neq K$, this contradicts the fact that
$L$ has no proper subfields containing $K$. Thus there is no two-dimensional subspace $U$ with $U^\times\leq A^{-1}$, as required.
\end{proof}

\begin{theorem} \label{small_constant_rank_subspaces}

Let $K$ be an arbitrary field and let $L$ be a separable extension field of $K$ of finite degree $n+1$, where $n\geq 2$.
Suppose that there are no proper subfields of $L$ containing $K$.
Let  $\Tr:L\to K$ 
denote the trace form. Let $\N$ be the $K$-subspace of all elements
 $f_z\in \Symm(L)$ defined by 
\[
 f_z(x,y)=\Tr(z(xy))
\]
for all $x$, $y$ and $z$ in $L$. 

Let $W$ be  the $K$-subspace of dimension $n$ in $L$, consisting of all elements of trace zero. Let $\mathcal{M}$ 
be the $K$-subspace of $\Symm(W)$ obtained by restricting each element of $\N$ to $W\times W$. Then $\dim \M=n+1$ and
 each non-zero element of $\M$ has rank at least $n-1$. Moreover, provided that $|K|\geq n$, 
 the maximum dimension of a constant rank $n-1$ subspace of $\M$ is one.

\end{theorem}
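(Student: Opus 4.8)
The first two assertions---that $\dim \M = n+1$ and that every non-zero element of $\M$ has rank at least $n-1$---were already recorded in the discussion preceding the statement, so the substance of the theorem lies in the final claim bounding constant rank $n-1$ subspaces. My plan is to transport the problem back to the field $L$, determine precisely which scalars $z$ yield a restriction $f_z$ of rank $n-1$, and then recognise the resulting configuration as exactly the one excluded by Lemma \ref{no_two_dimensional_subspaces}. To set this up, I would first observe that $z \mapsto f_z$ is an injective $K$-linear map $L \to \N$ (if $f_z = 0$ then $\Tr(zy)=0$ for all $y$, forcing $z=0$ by non-degeneracy of the trace form), and since $\dim \M = n+1$ the restriction $f_z \mapsto f_z|_{W\times W}$ is a $K$-linear isomorphism $L \to \M$. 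Hence a constant rank $n-1$ subspace of $\M$ corresponds to a $K$-subspace $U \leq L$ of equal dimension, each of whose non-zero elements $z$ gives $f_z|_{W\times W}$ of rank $n-1$.

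The key computation is the radical of $f_z|_{W\times W}$ for $z \neq 0$. For $w \in W$, the functional $y \mapsto \Tr(zwy)$ on $L$ vanishes on $W = \ker \Tr$; since $W$ has codimension one and its annihilator in $L^{*}$ is spanned by $\Tr$, this functional must be a $K$-multiple of $\Tr$, and non-degeneracy of the trace form then forces $zw \in K$. Thus $w$ lies in the radical exactly when $zw \in K$ and $\Tr(w)=0$; writing $w = z^{-1}k$ with $k \in K$, the trace condition becomes $k\,\Tr(z^{-1}) = 0$. Consequently $f_z|_{W\times W}$ has rank $n-1$, with one-dimensional radical $z^{-1}K$, precisely when $\Tr(z^{-1}) = 0$, that is, when $z^{-1} \in W$, equivalently $z \in W^{-1}$; otherwise it has rank $n$. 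This is the translation that converts the rank hypothesis into a containment condition on $U$.

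With this in hand the conclusion follows quickly: a constant rank $n-1$ subspace corresponds to a $K$-subspace $U \leq L$ with $U^{\times} \subseteq W^{-1}$, and $W$ has dimension $n = (n+1)-1$. Taking $A = W$ and $d = n+1$ in Lemma \ref{no_two_dimensional_subspaces}, using $d > 2$ (as $n \geq 2$) together with the hypothesis $|K| \geq n = d-1$, the lemma forbids any two-dimensional $U$ with $U^{\times} \subseteq W^{-1} = A^{-1}$, so $\dim U \leq 1$. To see the bound is attained, since $W \neq 0$ I would pick a non-zero $w \in W$ and set $z = w^{-1}$, whence $z^{-1} = w \in W$ and $f_z|_{W\times W}$ has rank $n-1$; the line it spans is a constant rank $n-1$ subspace, so the maximum dimension is exactly one. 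I expect the main obstacle to be the radical computation, and above all the recognition that the trace-zero subspace $W$ is precisely the subspace $A$ to which Lemma \ref{no_two_dimensional_subspaces} should be applied---once the equivalence ``rank $n-1 \Leftrightarrow z^{-1}\in W \Leftrightarrow z \in W^{-1}$'' is established, the lemma does the rest.
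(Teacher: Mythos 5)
Your proof is correct and follows essentially the same route as the paper: compute the radical of $f_z|_{W\times W}$ to show that rank $n-1$ forces $z\in W^{-1}$, and then apply Lemma \ref{no_two_dimensional_subspaces} with $A=W$ to rule out a two-dimensional constant rank $n-1$ subspace. Your version is slightly more complete in that you establish the equivalence (rank $n-1$ if and only if $z^{-1}\in W$) and explicitly verify that the bound of one is attained, whereas the paper only records the implication it needs.
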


\begin{proof}
 Let $F_z$ denote the restriction of $f_z$ to $W\times W$. Suppose that $F_z$ has rank $n-1$ and let $y\in W$ span the radical of $F_z$.
 Then we have 
 \[
  \Tr(xyz)=0
 \]
 for all $x$ in $W$. It follows from the non-degeneracy of $\Tr$ and the definition of $W$ that $yz$ is an element of $K$, say $yz=\lambda$,
 where $\lambda$ is a non-zero element of $K$. Thus $z=\lambda y^{-1}$.
 
 Suppose now that $\M$ contains a two-dimensional constant rank $n-1$ subspace. Our analysis above shows that $L$ contains a two-dimensional
 subspace $U$, say, with $U^\times\leq W^{-1}$. However, this possibility is excluded, by Lemma \ref{no_two_dimensional_subspaces}.
 Thus, the maximum dimension of a constant rank $n-1$ subspace of $\M$ is one.

\end{proof}

\begin{corollary} Suppose that $n=r-1$, where $r$ is an odd prime. Let $V$ be a vector space of dimension $n$ over $\mathbb{F}_q$.
 Then there is an $(n+1)$-dimensional subspace $\M$, say, of $\Symm(V)$ in which
 each non-zero element of $\M$ has rank at least $n-1$. Moreover, provided that $|q|\geq r+1$, 
 the maximum dimension of a constant rank
 $n-1$ subspace of $\M$ is one.
\end{corollary}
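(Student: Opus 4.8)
The plan is to deduce this corollary directly from Theorem \ref{small_constant_rank_subspaces} by exhibiting a field extension that meets its hypotheses. I would set $K=\mathbb{F}_q$ and take $L=\mathbb{F}_{q^r}$, the field of order $q^r$, regarded as an extension of $K$ of degree $r=n+1$. Two conditions of the theorem then need checking. First, the extension must be separable; this is automatic, since every finite field is perfect and hence every algebraic extension of it is separable. Second, $L$ must possess no proper subfields containing $K$. Here I would invoke the classification of subfields of a finite field: the subfields of $\mathbb{F}_{q^r}$ are exactly the fields $\mathbb{F}_{q^d}$ as $d$ ranges over the positive divisors of $r$. Because $r$ is prime, its only divisors are $1$ and $r$, so the only subfields of $L$ containing $K$ are $K$ itself and $L$, which is precisely the no-intermediate-subfield condition demanded by the theorem.

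With these two points in hand, Theorem \ref{small_constant_rank_subspaces} applies verbatim with $n+1=r$. Letting $W$ denote the $n$-dimensional $K$-subspace of trace-zero elements of $L$, and identifying $V$ with $W$ as an $n$-dimensional vector space over $\mathbb{F}_q$, the theorem produces an $(n+1)$-dimensional subspace $\M$ of $\Symm(V)$ each of whose non-zero elements has rank at least $n-1$. This establishes the first assertion, and it holds for every $q$.

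For the second assertion I would verify the numerical hypothesis of the theorem. That hypothesis is $|K|\geq n$, which in our situation reads $q\geq n=r-1$. The corollary assumes the stronger bound $q\geq r+1$, so this requirement is comfortably met, and the theorem then yields that the maximum dimension of a constant rank $n-1$ subspace of $\M$ is one.

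I do not anticipate any genuine obstacle, since the corollary is a straightforward specialization of Theorem \ref{small_constant_rank_subspaces}. The only point requiring care is the verification of the no-intermediate-subfield condition, and this rests entirely on the primality of $r$ together with the divisor description of subfields of a finite field; separability, being automatic over finite fields, causes no difficulty, and the gap between the assumed bound $q\geq r+1$ and the needed bound $q\geq r-1$ only makes the hypothesis easier to satisfy.
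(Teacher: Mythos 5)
Your proposal is correct and follows essentially the same route as the paper: take $K=\mathbb{F}_q$, $L=\mathbb{F}_{q^r}$, note that primality of $r$ rules out intermediate subfields, and apply Theorem \ref{small_constant_rank_subspaces}. You are somewhat more careful than the paper in explicitly checking separability and the numerical hypothesis $|K|\geq n$ (which $q\geq r+1$ comfortably implies), but the argument is the same.
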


\begin{proof}
 
 We take $K=\mathbb{F}_q$ and $L=\mathbb{F}_{q^r}$ in Lemma \ref{small_constant_rank_subspaces}. Since $r$ is a prime, there are no
 subfields of $L$ that properly contain $K$. The result follows from the lemma.
\end{proof}

\begin{corollary} \label{algebraic_number_field}
 Let $K$ be an algebraic number field and let $W$ be a vector space of dimension $n\geq 2$ over $K$.  
 Then there is an $n+1$-dimensional subspace $\M$, say, of $\Symm(W)$ in which
 each non-zero element of $\M$ has rank at least $n-1$. Moreover, 
 the maximum dimension of a constant rank
 $n-1$ subspace of $\M$ is one.
 
\end{corollary}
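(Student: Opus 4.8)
The plan is to deduce the corollary directly from Theorem \ref{small_constant_rank_subspaces} by producing, for each $n\geq 2$, a suitable field extension of $K$ of degree $n+1$. Since $K$ is an algebraic number field, it has characteristic $0$, so every finite extension of $K$ is automatically separable; moreover $K$ is infinite, whence the hypothesis $|K|\geq n$ of Theorem \ref{small_constant_rank_subspaces} holds for every $n$. Consequently, once I exhibit an extension $L/K$ of degree $n+1$ having no proper subfield strictly containing $K$, the entire conclusion---that $\dim \M=n+1$, that each non-zero element of $\M$ has rank at least $n-1$, and that the maximum dimension of a constant rank $n-1$ subspace of $\M$ is one---follows at once by applying Theorem \ref{small_constant_rank_subspaces} to this $L$.

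The only real content, therefore, is the existence of such an $L$. I would take $L=K(\alpha)$, where $\alpha$ is a root of a degree $n+1$ polynomial whose Galois group over $K$ is the full symmetric group $S_{n+1}$. The point is that for such an $L$, if $\widetilde{L}$ denotes the Galois closure of $L$ over $K$, then by the Galois correspondence the intermediate fields $K\subseteq M\subseteq L$ correspond bijectively to the subgroups $H$ satisfying $G_\alpha\leq H\leq S_{n+1}$, where $G_\alpha$ is the stabilizer in $S_{n+1}=\mathrm{Gal}(\widetilde{L}/K)$ of the root $\alpha$. Under the natural degree-$(n+1)$ action this stabilizer is a copy of $S_n$, which is a maximal subgroup of $S_{n+1}$ (equivalently, the natural action of $S_{n+1}$ on the roots is $2$-transitive, hence primitive, so the point stabilizer is maximal). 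Thus the only intermediate fields are $K$ and $L$ themselves, so $L$ has no proper subfield strictly containing $K$, exactly as required by Theorem \ref{small_constant_rank_subspaces}.

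To guarantee the existence of a degree $n+1$ polynomial over $K$ with Galois group $S_{n+1}$, I would invoke Hilbert's irreducibility theorem. A number field is Hilbertian, and the generic polynomial $x^{n+1}+t_1x^n+\cdots+t_{n+1}$ in the indeterminates $t_1,\ldots,t_{n+1}$ has Galois group $S_{n+1}$ over the rational function field $K(t_1,\ldots,t_{n+1})$; specializing the $t_i$ to elements of $K$ along a suitable Hilbert subset then yields a polynomial over $K$ whose Galois group is still $S_{n+1}$, and whose splitting field furnishes the desired $L$. I expect this existence step to be the main obstacle, since verifying that the generic polynomial realizes $S_{n+1}$ and citing the Hilbertianity of number fields are the substantive ingredients, whereas the maximality of $S_n$ in $S_{n+1}$ and the final appeal to Theorem \ref{small_constant_rank_subspaces} are routine.
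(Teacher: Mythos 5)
Your proposal is correct and follows essentially the same route as the paper: invoke Hilbert's irreducibility theorem to obtain a degree $n+1$ extension $L$ of $K$ whose Galois closure has group $S_{n+1}$, use the maximality of $S_n$ in $S_{n+1}$ together with the Galois correspondence to conclude that $L$ has no proper intermediate subfields over $K$, and then apply Theorem \ref{small_constant_rank_subspaces}. Your explicit verification that characteristic zero gives separability and that $|K|\geq n$ holds automatically is a welcome detail the paper leaves implicit, but the argument is the same.
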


\begin{proof}
 Hilbert's irreducibility theorem implies that there is an irreducible polynomial $P$, say, in $K[x]$ whose Galois group over $K$
 is isomorphic to the symmetric group $S_{n+1}$ of degree $n+1$. Let $M$ be a splitting field for $P$ over $K$ and let $L$ be the fixed
 field of a subgroup of $S_{n+1}$ isomorphic to $S_n$. 
 
 Since $S_n$ is a maximal subgroup of $S_{n+1}$, it follows from the Galois correspondence that $L$ is a minimal subfield
  of $M$ containing $K$. Thus the result follows from Theorem \ref{small_constant_rank_subspaces}.
\end{proof}

\end{document}